\numberwithin{equation}{section}
\theoremstyle{plain}
\newtheorem{theorem}[equation]{Theorem}
\newtheorem{proposition}[equation]{Proposition}
\newtheorem{lemma}[equation]{Lemma}
\newtheorem{corollary}[equation]{Corollary}
\newtheorem*{claim*}{Claim}
\theoremstyle{definition}
\newtheorem{definition}[equation]{Definition}
\newtheorem{setup}[equation]{Setup}
\theoremstyle{remark}
\newtheorem{remark}[equation]{Remark}
\author[R.\ Murakami]{Rei Murakami}
\address{Mathematical Institute, Tohoku University, 6-3, Aramaki Aza-Aoba, Aoba-ku, Sendai 980-8578, Japan}
\email{rei.murakami.p3@dc.tohoku.ac.jp, reimurakami66@gmail.com}
\begin{document}

\title[$J$-EQUATIONS AND DHYM EQUATIONS ON HOLOMORPHIC SUBMERSIONS]{$J$-EQUATIONS AND DEFORMED HERMITIAN-YANG-MILLS EQUATIONS ON HOLOMORPHIC SUBMERSIONS}

\begin{abstract} 
In this paper, we prove that there exists a solution of the $J$-equation on the total space of a holomorphic submersion if there exist solutions of the $J$-equation on the fibers and the base.
The method is an adiabatic limit technique. 
We also partially prove the converse implication. More precisely, if the total space is $J$-nef, then each fiber is $J$-nef. In addition, if each fiber has a solution of the $J$-equation, then the base is also $J$-nef. 
Furthermore, we establish similar phenomena for the deformed Hermitian-Yang-Mills equation.
\end{abstract}

\maketitle

\section{Introduction}\label{sec:Introduction}
 
 Finding a canonical metric such as a constant scalar curvature K{\"a}hler (cscK, for short) metric in a given K{\"a}hler class is a central problem in K{\"a}hler geometry. 
This paper studies when the total space of a holomorphic submersion admits a canonical metric. Fine \cite{Fine} constructed cscK metrics on fibered complex surfaces admitting holomorphic submersions onto high genus curves with fibers of genus at least two.
Dervan-Sektnan \cite{Dervan-Sektnan} generalized the above result to the higher dimensional case, introducing a new concept of relatively K{\"a}hler metrics on fibrations called \textit{optimal symplectic connections}.
 
In this paper, instead of cscK metrics, we consider solutions of the $J$-equations on holomorphic submersions. 
The $J$-equation is related to cscK metrics. For example, if $\mathrm{Ric}(\omega)<0$ and there exists a solution of the $J_{-\mathrm{Ric}(\omega)}$-equation in a K{\"a}hler class $[\omega]$, then there exists a cscK metric in $[\omega]$, by the properness-solvability equivalence of the cscK equation \cite{CC} and the $J$-equation \cite[Propositions 21 and 22]{CS}.
For a compact K{\"a}hler manifold $(X,\chi)$ of dimension $n$, the $J_{\chi}$-equation is given by 
\begin{equation}\label{eq:J-eq}
\Lambda_\omega\chi = c,\
\end{equation}
where $\Lambda_\omega$ denotes the trace with respect to a K{\"a}hler form $\omega$ and $c$ is the constant determined by 
$${n \int_X \chi \wedge \omega^{n-1}} = c\int_X \omega^n.$$
Let us fix our setup.
\begin{setup}\label{setup}
Let $(X,\chi)$ and $B$ be compact K{\"a}hler manifolds and $\pi : X \to B$ be a holomorphic submersion of relative dimension $m$, and denote the dimension of $B$ by $n$. 
Suppose that $\omega_X$ is a relatively K{\"a}hler form on $X$, i.e. the restriction on each fiber is K{\"a}hler, and $\omega_B$ is a K{\"a}hler form on $B$.  
\end{setup}

In this setup, the tangent space $TX$ splits as a smooth bundle 
$$TX \cong \mathcal{V} \oplus \mathcal{H}, $$ 
where $\mathcal{V} = \mathop{\mathrm{ker}}d\pi $ denotes the vertical tangent bundle and $\mathcal{H}$ denotes the horizontal subbundle of $TX$ defined by 
$$ \mathcal{H}_x= \{ u \in T_x X \mid \omega_X (u, v)=0 \; \textrm{for all} \; v \in \mathcal{V}_x \}.$$ 
By this splitting, the K{\"a}hler form $\chi$ on $X$ is divided into the purely vertical component $\chi_\mathcal{V}$, the purely horizontal component $\chi_\mathcal{H}$ and the mixed component $\chi_m$. 
Denote by $\chi_b$ the restriction of $\chi$ to a fiber $X_b$.
Define a $(1,1)$-form $\pi_B(\chi_\mathcal{H})$ on $B$ by fiberwise integral of $\chi_\mathcal{H} \wedge \omega_X^m $ divided by the volume of a fiber $X_b$, i.e.,
$$ \pi_B(\chi_\mathcal{H})(b) =  V_b^{-1} \pi_*(\chi_\mathcal{H} \wedge \omega_X^m)(b),$$
where $V_b = \int_{X_b} \omega_b^m $ and $\omega_b$ is the restriction of $\omega_X$ to a fiber $X_b$. See Subsection \ref{sec:pre} for the precise definition. Note that $V_b$ is independent of $b$ since $\omega_X$ is closed. Define a constant $c_b$ by 
$$c_b = \frac{m \int_{X_b} {\chi_b} \wedge \left( \omega_b \right)^{m-1}}{\int_{X_b} \left( \omega_b \right)^m}.$$
Note that $c_b$ is independent of $b$ since $\chi$ and $\omega_X$ are closed.
The main theorems of this paper are analogues of the following result by Dervan-Sektnan. 

\begin{theorem}[{\cite[Theorem 1.2]{Dervan-Sektnan}}]\label{thm:DS}
Suppose that $(B, L)$ admits a twisted cscK metric $\omega_B \in c_1(L)$ and $\pi: (X,H) \rightarrow (B,L)$ admits an optimal symplectic connection $\omega_X \in c_1(H)$. Assume that both of the automorphism groups $\mathrm{Aut}(X,H)$ and $\mathrm{Aut}(q)$, where $q$ is the moduli map, are discrete. Then there exists a cscK metric in the class $k\pi^*c_1(L) + c_1(H)$ for $k \gg 0$.
\end{theorem}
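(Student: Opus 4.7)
The plan is to use an adiabatic limit scheme: in the class $k\pi^*c_1(L)+c_1(H)$ with $k\gg 0$, the base directions are stretched while the fibers stay of bounded size, so one builds a formal cscK metric as a power series in $k^{-1}$ and then perturbs to an exact solution via a quantitative implicit function theorem. Concretely, I would set $\varepsilon=k^{-1/2}$ (or work directly with $k$), and rescale to study the family of relatively Kähler forms $\omega_k = k\pi^*\omega_B + \omega_X + (\text{corrections})$, where the starting approximation combines the base metric $\omega_B$ (solving the twisted cscK equation on $B$) with the relatively Kähler form $\omega_X$ giving the optimal symplectic connection on the fibers.

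The next step is the formal construction. Write $\omega_k = \omega_k^{(0)} + \sum_{j\geq 1} k^{-j} i\partial\bar\partial \varphi_j$ and expand the scalar curvature $S(\omega_k)$ in powers of $k^{-1}$. The leading term is the fiberwise scalar curvature, whose constancy reduces to each fiber being cscK; this is guaranteed by the definition of an optimal symplectic connection. At the next order, the obstruction splits into a vertical part, which vanishes exactly by the optimal symplectic connection equation (a specific PDE involving the curvature of the horizontal distribution and the Weil--Petersson form), and a horizontal part, which is the twisted cscK equation on $B$ with the twist provided by the fiberwise Weil--Petersson-type form associated to the family. Having chosen $\omega_X$ and $\omega_B$ precisely so that these leading obstructions vanish, one inductively solves for $\varphi_j$ by inverting the fiberwise Lichnerowicz operator (averaging against its fiberwise kernel is controlled by the base equation, and successive corrections are obtained by a standard hierarchy).

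Once the approximate solution $\omega_k^{\mathrm{app}}$ has constant scalar curvature modulo $O(k^{-N})$ for $N$ large, one applies a quantitative inverse function theorem to the nonlinear map $\varphi \mapsto S(\omega_k^{\mathrm{app}} + i\partial\bar\partial\varphi) - \underline{S}$ between suitable weighted Hölder or Sobolev spaces adapted to the collapsing geometry. The linearization is the Lichnerowicz operator of $\omega_k^{\mathrm{app}}$, and the discreteness assumptions on $\mathrm{Aut}(X,H)$ and $\mathrm{Aut}(p)$ ensure that the relevant kernels on the base and on the fibers are trivial, giving an inverse with norm bounded polynomially in $k$. Provided the error shrinks faster than that inverse norm grows, the Banach-space contraction mapping yields a true solution $\omega_k$ close to $\omega_k^{\mathrm{app}}$.

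The main obstacle is the last step: obtaining uniform estimates on the inverse of the Lichnerowicz operator as $k\to\infty$, since the metric degenerates in the fiber directions. The key technical input is a mapping property of the form $\|\varphi\|_{C^{4,\alpha}_k} \leq C k^{a} \|L_k \varphi\|_{C^{0,\alpha}_k}$ in adiabatic-limit weighted norms, with $a$ explicit and independent of $k$. This requires a careful decomposition of functions into fiberwise averages and fiberwise orthogonal complements, using the fiber Lichnerowicz operator on the latter (invertible by discreteness of $\mathrm{Aut}(p)$) and an operator on $B$ close to the linearization of the twisted cscK equation on the former (invertible by discreteness of $\mathrm{Aut}(X,H)$ and the twisted-cscK hypothesis). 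Pushing the approximate solution to a high enough order in $k^{-1}$ then overcomes the polynomial loss and yields the cscK metric.
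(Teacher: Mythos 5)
This statement is quoted from Dervan--Sektnan and is not proved in the paper at hand; it serves only as the prototype for Theorem \ref{thm:main}, the $J$-equation analogue which the paper does prove in detail. Your outline is nevertheless an accurate sketch of the adiabatic-limit strategy that Dervan--Sektnan (building on Fine) use, and it is the same strategy the paper adapts: expand the relevant curvature quantity in powers of $k^{-1}$, use the fiberwise equation to kill the leading obstruction and the base equation (twisted by the Weil--Petersson-type form in the cscK case, by $\pi_B(\chi_\mathcal{H})$ in the $J$-case) to kill the next, inductively build an $O(k^{-r-1})$ approximate solution, and perturb it to a genuine solution via the quantitative inverse function theorem, the crux being a polynomial-in-$k$ bound on the inverse of the linearization in $k$-adapted norms. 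The role you assign to the discreteness of $\mathrm{Aut}(X,H)$ and $\mathrm{Aut}(p)$—guaranteeing trivial kernels for the fiber and base Lichnerowicz operators—is exactly what Dervan--Sektnan need, and is precisely what the present paper replaces by the automatic injectivity of $F_{\omega,\chi}$ on mean-zero functions (Lemma \ref{lem:properties}); that uniqueness, due to X.~X.~Chen, is why no automorphism hypothesis appears in Theorem \ref{thm:main}. One place where the paper's own argument departs from the template you describe is the key elliptic estimate: because the twisting form $\chi$ is an arbitrary K\"ahler form on $X$ rather than a pullback from $B$, Fine's model-space estimate does not transfer, and the paper re-derives it by a direct Taylor-style localization and a partition of unity on the total space rather than on the base.
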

A relatively K{\"a}hler form $\omega_X$ whose restriction to each fiber is a cscK metric is called an \textit{optimal symplectic connection} if it satisfies a certain equation. If a cscK metric on each fiber $(X_b, H_b)$ is unique, then the optimal symplectic connection condition becomes trivial. Our situation is similar to this situation, since a solution of the $J$-equation is unique \cite[Proposition 2]{Chen}.
We suppress pullbacks via $\pi$, so if $\omega_B$ is a form on $B$, its pullback to $X$ will also be denoted by $\omega_B$.

\begin{theorem}\label{thm:main}
In Setup \ref{setup}, assume that 
the restriction $\omega_b$ to each fiber $X_b$ is a solution of the $J_{\chi_b}$-equation and   
$\omega_B$ is a solution of the $J_{\pi_B(\chi_\mathcal{H})}$-equation.
Then there exists a solution of the $J_\chi$-equation in the class $[ {\omega_X} + k \omega_B]$ for $k \gg 0$.
\end{theorem}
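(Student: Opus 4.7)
The plan is to prove Theorem~\ref{thm:main} by the \emph{adiabatic limit} technique, mirroring the strategy of Theorem~\ref{thm:DS}. Looking for $\omega_k \in [\omega_X + k\omega_B]$ of the form
\begin{equation*}
\omega_k = \omega_X + k\omega_B + i\partial\bar\partial \left( \sum_{j=1}^{J} k^{-j}\varphi_j \right),
\end{equation*}
I will expand $\Lambda_{\omega_k}\chi - c_k$ as a formal series in $k^{-1}$ and kill it order by order. Because the splitting $TX \cong \mathcal{V}\oplus\mathcal{H}$ is chosen so that $\omega_X$ is block-diagonal with respect to it, the relatively K{\"a}hler form $\omega_X + k\omega_B$ is itself block-diagonal, and a Neumann expansion of its horizontal block yields
\begin{equation*}
\Lambda_{\omega_X + k\omega_B}\chi = \Lambda_{\omega_b}\chi_b + k^{-1}\Lambda_{\omega_B}\chi_\mathcal{H} + O(k^{-2}),
\end{equation*}
while a parallel expansion of the topological constant $c_k$ has leading term $c_b$. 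The fiberwise assumption $\Lambda_{\omega_b}\chi_b = c_b$ therefore makes the $O(1)$ equation hold automatically.

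At each subsequent order $k^{-j}$ the equation for $\varphi_j$ reads $L_b \varphi_j = F_j$ fiberwise, where $L_b$ is the linearization of the $J_{\chi_b}$-equation at $\omega_b$ and $F_j \in C^{\infty}(X)$ is an explicit expression in the earlier $\varphi_1,\dots,\varphi_{j-1}$. Because the $J_{\chi_b}$-solution is unique by \cite[Proposition~2]{Chen}, the fiberwise kernel of $L_b$ consists of the constants, so $\varphi_j$ exists iff the fiberwise average of $F_j$ vanishes, and is then determined modulo an element of $\pi^{*}C^{\infty}(B)$. Taking the fiberwise average of the order $k^{-j-1}$ equation and using the definition of $\pi_B(\chi_\mathcal{H})$ turns this solvability condition into the linearized $J_{\pi_B(\chi_\mathcal{H})}$-equation on $B$ for the remaining base degree of freedom; by our assumption that $\omega_B$ solves $J_{\pi_B(\chi_\mathcal{H})}$ together with the same uniqueness statement, this linearized operator is invertible modulo constants, so the induction closes. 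The outcome is, for every $J$, an approximate solution $\omega_k^{(J)}$ satisfying $\Lambda_{\omega_k^{(J)}}\chi - c_k = O(k^{-J-1})$.

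To upgrade $\omega_k^{(J)}$ to an honest solution I will apply a quantitative implicit function theorem. The linearization $D_k$ of $\phi \mapsto \Lambda_{\omega_k^{(J)} + i\partial\bar\partial\phi}\chi$ is a second-order elliptic operator of the form $\phi \mapsto -\mathrm{tr}_{\omega_k^{(J)}}\!\bigl((i\partial\bar\partial\phi)\cdot\chi\bigr)$, and what is needed is a bound on $\|D_k^{-1}\|$ on the orthogonal complement of the constants that is uniform in $k$, measured in function spaces adapted to the adiabatic scaling. I expect this to be the main analytic obstacle: as $k \to \infty$, $D_k$ degenerates in the horizontal directions, so one has to use H{\"o}lder or Sobolev norms weighted by the $k$-dependent metric, decompose test functions into fiberwise-mean and fiberwise mean-zero parts, estimate each piece separately using the fiber and base linearizations, and then assemble these into a bound on the full inverse. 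Once such uniform estimates are in hand, a Newton iteration starting from $\omega_k^{(J)}$ converges for sufficiently large $k$ and produces the desired solution of the $J_\chi$-equation in $[\omega_X + k\omega_B]$.
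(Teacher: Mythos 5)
Your overall strategy matches the paper's exactly: build approximate solutions of the $J_\chi$-equation order by order in $k^{-1}$ using the fiberwise and base linearizations, then upgrade to a genuine solution via a quantitative implicit function theorem (the paper uses \cite[Theorem 4.1]{Fine}). However, your ansatz $\omega_X + k\omega_B + \sqrt{-1}\partial\bar\partial\bigl(\sum_{j\ge 1} k^{-j}\varphi_j\bigr)$ puts the base corrections at the wrong scale. A pullback potential $\pi^*\phi_B$ only changes the horizontal block of the metric, and since the horizontal block of $\omega_X+k\omega_B$ is $O(k)$, the contribution of $\sqrt{-1}\partial\bar\partial\pi^*\phi_B$ to $\Lambda\chi$ enters \emph{two} orders below its scale, not one as you assert (``the order $k^{-j-1}$ equation''); this is exactly the content of Lemma~\ref{lem:general_linearisation}, where $D_1(\pi^*\phi)=0$ and the base linearization first appears in $D_2$. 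The paper therefore uses two separate scales, $k^{2-i}\phi_{i,B}$ and $k^{-i}\phi_{i,\mathcal{V}}$, so that both perturbations contribute to the trace at order $k^{-i}$. With your sum starting at $j=1$, the first nontrivial base correction can only affect the trace at $O(k^{-3})$, so there is nothing available to cancel the non-constant part of the $O(k^{-2})$ error term and the induction does not close.

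A second gap is logical: uniqueness of the $J$-equation solution \cite[Proposition 2]{Chen} does not by itself imply that the kernel of the linearized operator $F_{\omega,\chi}$ is only constants. The paper derives this from the integration-by-parts identity of Lemma~\ref{lem:properties}, $\int_X \phi\, F_{\omega,\chi}(\psi)\,\omega^n = \int_X \bigl(\sqrt{-1}\partial\phi\wedge\bar\partial\psi,\chi\bigr)_\omega\,\omega^n$, which shows $F_{\omega,\chi}$ is self-adjoint with kernel the constants because $\chi$ is K\"ahler; this, not Chen's uniqueness, is what makes both the fiberwise and the base linear problems solvable modulo constants. Finally, on the analytic side, the paper does not obtain an inverse bound uniform in $k$; it only proves $\|P\|_{\mathrm{op}}\le Ck^A$ (via the $k$-dependent eigenvalue estimate and a $k$-dependent elliptic estimate), and then compensates by taking the approximation order $r\ge 2A+n$ so that $\mathcal{L}_{k,r}(0)=O(k^{-r-1+n/2})$ falls inside the radius where the inverse function theorem applies. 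Your expectation of uniform invertibility in adapted norms would bypass this, but the paper does not achieve it and it is not obvious it can be achieved.
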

We provide two proofs of this theorem. One uses the $\mathcal{C}$-subsolution criterion of the existence of a solution of the $J$-equation due to \cite{SW} (Theorem \ref{thm:SW}). The other uses an adiabatic limit technique inspired by \cite{Fine, DS20, Dervan-Sektnan}.
In particular, in \cite[Theorem 1.4]{DS20}, they considered the case where a twisting form on the total space is the pullback of a K{\"a}hler form on the base.  
The difference from those earlier results is that we choose an arbitrary reference metric $\chi$ on the total space $X$.

We also consider the converse implication of Theorem \ref{thm:main}. 
We use a topological condition called \textit{$J$-positivity}. 
The Lejmi-Sz{\'e}kelyhidi conjecture \cite{LS} says that the $J_\chi$-equation \eqref{eq:J-eq} is solvable in a K{\"a}hler class $[\omega]$ on a compact K{\"a}hler manifold $X$ of dimension $n$ if and only if we have 
$$ \int_W{ c \, {\omega}^p - p \, {\omega}^{p-1} \wedge \chi} > 0$$
for all $p$-dimensional subvarieties $W \subset X$, where $ p = 1,2, \dots ,n-1$.
A pair $([\omega], [\chi])$ is said to be $J$-positive if the latter condition holds.
If it is just nonnegative, a pair is said to be \textit{$J$-nef}.
The uniform version of the conjecture is proved by Gao Chen \cite{G.Chen} and it is also proved that the uniform conditions are equivalent to the uniform $J$-stability. 
The original version of the conjecture is proved by Datar-Pingali \cite{Datar-Pingali} in the projective case and by Song \cite{Song} in general.

Now we state the converse implication.

\begin{theorem}\label{thm:converse}
In Setup \ref{setup}, if the pair $([\omega_X + k \omega_B], [\chi])$ is $J$-nef for $ k \gg 0$, then the pair $([\omega_b], [\chi_b])$ is $J$-nef for all $ b \in B$. 
In addition, if the restriction $\omega_b$ is a solution of the $J_{\chi_b}$-equation or $\pi^*(\pi_B(\omega_X)_\mathcal{H}) = (\omega_X)_\mathcal{H} $, then the pair $([\omega_B], [\pi_B\left(\chi_\mathcal{H}\right)])$ is also $J$-nef.
\end{theorem}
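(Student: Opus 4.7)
The plan is to test the $J$-nef hypothesis on $(X,[\omega_X+k\omega_B],[\chi])$ against well-chosen subvarieties of $X$ and extract the fiber and base statements by sending $k\to\infty$. The common ingredient is an asymptotic expansion $c_k = c_b + c_B/k + O(k^{-2})$ of the Lejmi--Sz{\'e}kelyhidi slope constant $c_k = (m+n)[\chi]\cdot[\omega_X+k\omega_B]^{m+n-1}/[\omega_X+k\omega_B]^{m+n}$, obtained by expanding numerator and denominator in $k$ and evaluating the leading integrals by fiber integration. The fact that $\omega_X$ has no mixed component (by the very definition of $\mathcal{H}$) is what makes these fiber integrals tractable.

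For the first assertion, I fix $b\in B$ and a $p$-dimensional subvariety $W\subset X_b$, viewed as a subvariety of $X$. Since $\omega_B$ is pulled back from $B$ and $W$ lies in a single fiber, $\omega_B|_W=0$, so on $W$ one has $(\omega_X+k\omega_B)^p = \omega_b^p$ and $(\omega_X+k\omega_B)^{p-1}\wedge\chi = \omega_b^{p-1}\wedge\chi_b$. The $J$-nef hypothesis on $X$ then reads $\int_W c_k\omega_b^p - p\omega_b^{p-1}\wedge\chi_b \geq 0$; sending $k\to\infty$ and using $c_k\to c_b$ yields the $J$-nef condition for $([\omega_b],[\chi_b])$.

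For the second assertion, let $V\subset B$ be a $p$-dimensional subvariety and apply the $J$-nef hypothesis to $\pi^{-1}(V)\subset X$, of dimension $p+m$. Expanding the resulting inequality $F(k)\geq 0$ in powers of $k$ and using fiber integration, the leading $k^p$ coefficient vanishes exactly (this is the identity that forces $\lim_{k\to\infty}c_k = c_b$), and the next coefficient at order $k^{p-1}$ turns out to be a positive multiple of $\int_V c_B\omega_B^p - p\omega_B^{p-1}\wedge\pi_B(\chi_\mathcal{H})$. Dividing $F(k)\geq 0$ by $k^{p-1}$ and letting $k\to\infty$ gives the desired $J$-nef inequality on $B$.

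The main obstacle is identifying the subleading coefficient of $c_k$ as $c_B$, and both this and the $k^{p-1}$-coefficient of $F(k)$ rest on the fiber-integral identity $\pi_*(\chi\wedge\omega_X^m) = V_b\bigl(c_b\pi_B((\omega_X)_\mathcal{H}) + \pi_B(\chi_\mathcal{H})\bigr)$. Decomposing $\chi = \chi_\mathcal{V}+\chi_m+\chi_\mathcal{H}$ and noting that $\chi_m$ cannot contribute to a top vertical form, the identity reduces to $m\pi_*\bigl(\chi_\mathcal{V}\wedge(\omega_X)_\mathcal{V}^{m-1}\wedge(\omega_X)_\mathcal{H}\bigr) = c_b V_b\, \pi_B((\omega_X)_\mathcal{H})$. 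This is precisely where one of the extra hypotheses enters: if $\omega_b$ solves $J_{\chi_b}$ on each fiber, then $m\chi_\mathcal{V}\wedge(\omega_X)_\mathcal{V}^{m-1} = c_b(\omega_X)_\mathcal{V}^m$ pointwise and the identity is immediate; if instead $\pi^*\pi_B((\omega_X)_\mathcal{H}) = (\omega_X)_\mathcal{H}$, then the horizontal factor comes out of the fiber integral and the remaining $\int_{X_b}\chi_b\wedge\omega_b^{m-1}$ equals $c_b V_b/m$ by the definition of $c_b$. Without one of these hypotheses the identity need not hold and the clean $J$-nef statement on $B$ cannot be extracted from the $k^{p-1}$-coefficient.
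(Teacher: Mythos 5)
Your proposal is correct and follows essentially the same route as the paper: both arguments expand the Lejmi--Sz\'ekelyhidi slope constant and the slope of $W\subset X_b$ (resp.\ $\pi^{-1}V$) in powers of $k$, use $\omega_B|_{X_b}=0$ and fiber integration, and rely on the same pointwise/form-level identity $m\pi_*(\chi_\mathcal{V}\wedge(\omega_X)_\mathcal{V}^{m-1}\wedge(\omega_X)_\mathcal{H}) = c_b V_b\,\pi_B((\omega_X)_\mathcal{H})$ (equivalently, the vanishing of the cross-term $C_1$ in the paper) to extract the base inequality from the $k^{p-1}$ coefficient. The only difference is cosmetic: the paper phrases the extraction as a proof by contradiction, whereas you pass directly to the limit after dividing by the appropriate power of $k$.
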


The calculation also shows that if one has a uniform upper bound of a solution of the $J$-equation on the total space, then there exists a solution of the $J$-equation on a base (see Remark \ref{rem:collapsing}). This might be related to the work \cite{GPT}, where they showed the $L^\infty$ estimate for solutions of a family of Hessian equations with a certain structural condition. 

Furthermore, we prove similar theorems for the deformed Hermitian-Yang-Mills (dHYM, for short) equation.
On an $n$-dimensional compact K{\"a}hler manifold $(X,\chi)$ with a closed real $(1,1)$-form $\omega$, the dHYM$_\chi$ equation is given by
\begin{equation}\label{eq:dhym}
    \mathrm{Im}\left(e^{-\sqrt{-1}\theta}(\omega+\sqrt{-1}\chi)^n\right)=0, \quad \mathrm{Re}\left(e^{-\sqrt{-1}\theta}(\omega+\sqrt{-1}\chi)^n\right)>0,
\end{equation}
where $\theta$ is a constant (determined up to $2\pi$) which satisfies
\begin{equation}\label{eq:dhymconst}
    \int_X \mathrm{Im}\left(e^{-\sqrt{-1}\theta} (\omega+\sqrt{-1}\chi)^n\right)=0, \quad \int_X\mathrm{Re}\left(e^{-\sqrt{-1}\theta}(\omega+\sqrt{-1}\chi)^n\right)>0.
\end{equation}
The dHYM equation was introduced by \cite{MMMS} in the physical literature and by \cite{LYZ} in the mathematical literature. The latter studied the mirror symmetry in the context of \cite{SYZ}, and derived the dHYM equation as an equation that the mirror of a special Lagrangian submanifold must satisfy. 

Let us fix our setup for the dHYM version of Theorem \ref{thm:main}.

\begin{setup}\label{setup:dhym}
    Let $(X,\chi)$ and $B$ be compact K{\"a}hler manifolds and $\pi:X\to B$ a holomorphic submersion of relative dimension $m$. Denote the dimension of $B$ by $n$. Let $\omega_X$ be a closed real $(1,1)$-form on $X$ and $\omega_B$ a K{\"a}hler form on $B$. 
\end{setup}
From this data, we can define a $(1,1)$-form $\chi_B$, and it becomes K{\"a}hler if we assume that the restriction $\omega_b$ on each fiber $X_b$ is a solution of the dHYM$_{\chi_b}$ equation (Lemma \ref{lem:dhymkahler}). The following is an analogue of Theorem \ref{thm:main}:

\begin{theorem}\label{thm:dhym_main}
    In Setup \ref{setup:dhym}, assume that 
    the restriction $\omega_b$ to each fiber $X_b$ is a solution of the dHYM$_{\chi_b}$-equation and   
    $\omega_B$ is a solution of the $J_{\chi_B}$-equation.
    Then there exists a solution of the dHYM$_\chi$-equation in the class $[ {\omega_X} + k \omega_B]$ for $k \gg 0$.
\end{theorem}

In Theorem \ref{thm:dhym_main}, remark that we do not assume the supercritical phase condition, under which the dHYM equation has been studied extensively. 
For instance, the solvability of the supercritical dHYM equation is proved to be equivalent to the existence of a supercritical $\mathcal{C}$-subsolution by \cite{CJY} (Theorem \ref{thm:CJY}), and
to a certain topological condition by \cite{G.Chen, CLT} (Theorem \ref{thm:CLT}). 
Using the characterization by \cite{CJY}, we provide the first proof of Theorem \ref{thm:dhym_main}, which uses a supercritical $\mathcal{C}$-subsolution and thus works only under the supercritical condition. The second proof uses an adiabatic limit technique and proves Theorem \ref{thm:dhym_main} in general. The key point is that this argument essentially only requires the invertibility of the linearized operator, and for the dHYM equation the linearized operator remains invertible even outside the supercritical phase range.

Although one usually assumes that $\chi$ is positive, the $J_\chi$-equation and dHYM$_\chi$ equation still make sense without this assumption. However, when $\chi$ is only semipositive, the kernel of the linearized operator is nontrivial (it contains all functions with zero $\chi$–norm), so invertibility fails. Hence, unlike the extension beyond the supercritical phase for the dHYM equation, the method cannot be pushed beyond the usual positivity assumption on $\chi$.

Related to the characterization of the solvability of the supercritical dHYM equation by \cite{CLT}, we also prove a variant of Theorem \ref{thm:converse} for the dHYM equation (Theorem \ref{thm:dhym_converse}).

\begin{subsection}*{Outline}
The paper is organized as follows. In Section \ref{sec:J}, we study the $J$-equation.
In Subsection \ref{sec:pre}, we collect the basic materials needed to prove Theorem \ref{thm:main}. 
In Subsection \ref{sec:approximation}, we construct a family of approximate solutions of the $J_\chi$-equation. Then, in Subsection \ref{sec:firstproof}, by using the first order approximation and the $\mathcal{C}$-subsolution criterion by \cite{SW}, we prove Theorem \ref{thm:main}.
In Subsection \ref{sec:proof}, we provide the second proof of Theorem \ref{thm:main}, which captures the asymptotic behavior of the solution of the $J_\chi$-equation in $[\omega_X+k\omega_B]$ for $k\gg0$.
In the second proof, we perturb the approximate solution to a genuine solution by the inverse function theorem.
In Subsection \ref{sec:converse}, by a simple calculation, we prove Theorem \ref{thm:converse}. 
We study the dHYM equation in Section \ref{sec:dHYM}. The outline of this section is the same as that of Section \ref{sec:J}. In Section \ref{sec:examples}, we provide some examples in which our main results can be applied.
\end{subsection}

\begin{subsection}*{Acknowledgements}
The author would like to thank his advisor, Shin-ichi Matsumura, for introducing this topic, carefully reading the manuscript multiple times, giving valuable comments, and for his continuous support.  
The author also thanks Ryosuke Takahashi for his warm encouragement and helpful comments.  
The author is deeply grateful to the anonymous referee for suggesting the study of the dHYM equation and a proof using the $\mathcal{C}$-subsolution, as well as for many other helpful comments that improved the clarity and overall quality of the paper.  
The author thanks Hikaru Yamamoto, who also suggested the study of the dHYM equation.  
This work was supported by JSPS KAKENHI Grant Number JP24KJ0346.
\end{subsection}

\section{$J$-equation}\label{sec:J}
\subsection{Preliminaries}\label{sec:pre}
We collect some basic properties of the trace operator needed to prove Theorem \ref{thm:main}. 
Let $(X, \chi)$ be a compact K{\"a}hler manifold of dimension $n$ and $\omega$ be a K{\"a}hler form on $X$. 
By the $\partial \bar{\partial}$-lemma, any K{\"a}hler form in a class $[\omega]$ can be described as $\omega_\phi = \omega + \sqrt{-1} \partial \bar{\partial} \phi$, where $\phi \in C^\infty(X, \mathbb{R})$.
The $J_\chi$-equation in $[\omega]$ is given by 
$$ \Lambda_{\omega_\phi} \chi = c,$$ 
where $\Lambda_{\omega_\phi}$ denotes the trace with respect to a K{\"a}hler metric $\omega_\phi$ and $c$ is the constant determined by 
$${n \int_X \chi \wedge \omega^{n-1}} = c\int_X \omega^n.$$
Let $\omega_t = \omega + t \sqrt{-1} \partial \bar{\partial} \phi $.
The linearization of the trace operator at $\omega$ is given by 
\begin{equation}\label{eq:linearisation}
\frac{d}{dt}\Big|_{t = 0} \Lambda_{\omega_t} \chi = 
{- ( \chi , \sqrt{-1} \partial \bar{\partial} \phi)_\omega } =
{- {g^{m \bar{q}}} \, {\partial_m \partial_{\bar{n}} \phi} \, {g^{p \bar{n}}} \, {\chi_{p \bar{q}}}},
\end{equation}
where $g$ is the metric tensor corresponding to $\omega$ and $(\cdot,\cdot)_\omega $ denotes the inner product on the space of forms defined by $\omega$.

\begin{definition}\label{def:F-op}
We define an operator $F_{\omega, \chi} : C^{\infty}(X) \rightarrow C^{\infty}(X) $ by 
$$F_{\omega, \chi} (\phi) = {-{ \left( \chi , {\sqrt{-1} \partial \bar{\partial} \phi} \right)_\omega} - \left( \partial \left( \Lambda_\omega \chi \right) , \bar{\partial} \phi \right)_\omega }. $$
\end{definition}

\begin{remark}
The operator $F_{\omega, \chi}$ becomes the linearization of the trace operator in $[\omega]$ at a solution of the $J_\chi$-equation by Equation \eqref{eq:linearisation}. 
\end{remark}

\begin{lemma}[{\cite[Lemma 2.2]{Hashimoto2018}}]\label{lem:properties}
The operator $F_ {\omega, \chi}$ is a complex self-adjoint second order elliptic linear operator. Moreover, it satisfies 
$$ \int_X {\phi \, F_{\omega, \chi} (\psi)} \, \omega^{n} 
= \int_X \nabla^{\bar{q}} \phi \, \nabla^{p} \psi \, {\chi_{p \bar{q}}} \, \omega^n.$$ 
In particular, the subspace $\mathop{\mathrm{ker}}F_{\omega, \chi}$ of $C^\infty(X)$ consists of constant functions on $X$.
\end{lemma}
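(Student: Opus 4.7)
The statement has three parts: (i) that $F_{\omega,\chi}$ is a second-order $\mathbb{C}$-linear elliptic operator; (ii) the integration-by-parts identity; (iii) the characterization of the kernel. Part (i) is essentially formal from Definition \ref{def:F-op}: the second-order piece is $-g^{m\bar q}g^{p\bar n}\chi_{p\bar q}\partial_m\partial_{\bar n}\phi$, with principal symbol $\xi\mapsto g^{m\bar q}g^{p\bar n}\chi_{p\bar q}\xi_m\xi_{\bar n}$, which is positive definite because $\chi$ is Kähler, and the first-order piece $-(\partial(\Lambda_\omega\chi),\bar\partial\phi)_\omega$ is plainly $\mathbb{C}$-linear. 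I would dispose of (i) in a sentence.

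The heart of the argument is (ii). Writing $u = \Lambda_\omega\chi$ and unpacking the definition, I would split
\[
\int_X \phi\, F_{\omega,\chi}(\psi)\,\omega^n = -\int_X \phi\,(\chi,\sqrt{-1}\partial\bar\partial\psi)_\omega\,\omega^n \;-\; \int_X \phi\,(\partial u,\bar\partial\psi)_\omega\,\omega^n.
\]
The first integrand I would convert to a wedge-product expression via the pointwise Kähler identity for real $(1,1)$-forms $\alpha,\beta$,
\[
n(n-1)\,\alpha\wedge\beta\wedge\omega^{n-2} = \left[(\Lambda_\omega\alpha)(\Lambda_\omega\beta) - (\alpha,\beta)_\omega\right]\omega^n,
\]
applied with $\alpha = \chi$, $\beta = \sqrt{-1}\partial\bar\partial\psi$; combined with $\sqrt{-1}\partial\bar\partial\psi\wedge\omega^{n-1} = \tfrac{1}{n}\Lambda_\omega(\sqrt{-1}\partial\bar\partial\psi)\,\omega^n$ and Stokes' theorem (using $d\chi = d\omega = 0$), this shifts $\partial\bar\partial$ off $\psi$ onto $\phi$. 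The crucial observation is that integrating the resulting $\int\phi u\,\Lambda_\omega(\sqrt{-1}\partial\bar\partial\psi)\,\omega^n$ by parts produces exactly $\int\phi(\partial u,\bar\partial\psi)_\omega\,\omega^n$, cancelling the second summand above (this is precisely the role of the first-order correction in the definition of $F_{\omega,\chi}$), plus an extra term $\int u(\partial\phi,\bar\partial\psi)_\omega\,\omega^n$. The remaining wedge integral is then rewritten via the companion pointwise identity
\[
n(n-1)\,\sqrt{-1}\partial\phi\wedge\bar\partial\psi\wedge\chi\wedge\omega^{n-2} = \left[u(\partial\phi,\bar\partial\psi)_\omega - \nabla^{\bar q}\phi\,\nabla^p\psi\,\chi_{p\bar q}\right]\omega^n,
\]
which can be verified by simultaneously diagonalizing $\omega$ and $\chi$ at a point, and the two $u(\partial\phi,\bar\partial\psi)_\omega$ contributions cancel, leaving precisely $\int_X \nabla^{\bar q}\phi\,\nabla^p\psi\,\chi_{p\bar q}\,\omega^n$.

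Self-adjointness and the kernel computation are then immediate corollaries. The right-hand side of the displayed identity is symmetric in $(\phi,\psi)$ after relabeling dummy indices and using $\chi_{p\bar q}=\overline{\chi_{q\bar p}}$, so $\int\phi\,F_{\omega,\chi}(\psi)\,\omega^n = \int\psi\,F_{\omega,\chi}(\phi)\,\omega^n$; since $F_{\omega,\chi}$ has real coefficients this is equivalent to complex self-adjointness. If $F_{\omega,\chi}(\phi)=0$, setting $\psi = \phi$ gives $\int_X g^{m\bar q}g^{p\bar n}\chi_{p\bar q}(\partial_m\phi)(\partial_{\bar n}\phi)\,\omega^n = 0$; the integrand is pointwise nonnegative because $\chi$ is positive as a Hermitian form, so $\partial\phi\equiv 0$, and $\phi$ must be constant on the connected compact manifold $X$. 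The main obstacle is the bookkeeping in the second paragraph: keeping track of all boundary terms produced by Stokes' theorem and confirming that the $(\partial u,\bar\partial\phi)_\omega$ correction in the definition of $F_{\omega,\chi}$ is exactly what makes everything collapse to the symmetric Dirichlet-type expression on the right.
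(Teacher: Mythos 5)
Your proposal is correct and follows essentially the same route as the paper: both parts of the computation rest on the two Sz\'ekelyhidi pointwise identities \eqref{eq:sze}, integration by parts using $d\chi=d\omega=0$, and the observation that the first-order correction $-(\partial(\Lambda_\omega\chi),\bar\partial\psi)_\omega$ is exactly what cancels the boundary term produced by moving $\partial\bar\partial$ off $\psi$, leaving the symmetric Dirichlet-type integral. The self-adjointness and kernel arguments are likewise identical to those in the paper.
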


\begin{proof}
For the readers' convenience, we recall the proof in \cite{Hashimoto2018} here.
Since $\chi$ is a K{\"a}hler form, the operator $F_{\omega, \chi}$ is a second order elliptic linear operator.
Recall that 
\begin{equation}\label{eq:sze}
\begin{aligned}
n \chi \wedge \omega^{n-1} &= (\Lambda_\omega \chi) \omega^n, \\ 
n(n-1) \chi \wedge \alpha \wedge \omega^{n-2} 
&= \left( \left( \Lambda_\omega \chi \right) \left( \Lambda_\omega \alpha \right) - \left( \chi, \alpha \right)_\omega \right) \omega^n
\end{aligned}
\end{equation}
for real $(1, 1)$-forms $\chi$ and $\alpha$ (see \cite[Lemma 4.7]{szekelyhidi2014introduction}).
For $\phi, \psi \in C^{\infty}(X, \mathbb{R}) $,
by using Equations \eqref{eq:sze} and integration by parts, we obtain 
\begin{align*}
& \int_X {\phi \, F_{\omega, \chi} (\psi)} \omega^{n} \\
=&\int_X \phi \left( n \left(n-1 \right) \chi \wedge {\sqrt{-1} \partial \bar{\partial} \psi} \right) \wedge \omega^{n-2} 
- \int_X \phi \, \left( \Delta_\omega \psi \right) \, (\Lambda_\omega \chi) \omega^n \\
& \qquad - \int_X n \, \phi \, \left( \sqrt{-1} \partial \left( \Lambda_\omega \chi \right) \wedge \bar{\partial}\psi \right) \wedge \omega^{n-1} \\
= &- \int_X n \left( n-1 \right) \left( \sqrt{-1} \partial \phi \wedge \bar{\partial} \psi \right) \wedge \chi \wedge \omega^{n-2}
 - \int_X n \, \phi \left( \Lambda_\omega \chi \right) \left( \sqrt{-1} \partial \bar{\partial} \psi \right) \wedge \omega^{n-1} \\
& \qquad - \int_X n \, \phi \left( \sqrt{-1} \partial \left( \Lambda_\omega \chi \right) \wedge \bar{\partial}\psi \right) \wedge \omega^{n-1} \\
= &- \int_X n \left( n-1 \right) \left( \sqrt{-1} \partial \phi \wedge \bar{\partial} \psi \right) \wedge \chi \wedge \omega^{n-2} 
+ \int_X n \left( \Lambda_\omega \chi \right) \left( \sqrt{-1} \partial \phi \wedge \bar{\partial} \psi \right) \wedge \omega^{n-1} \\
= &\int_X \left( \sqrt{-1} \partial \phi \wedge \bar{\partial} \psi , \chi \right)_\omega \omega^n.
\end{align*}
In the second equality, we used the fact that $\chi$ is closed.
Thus, the operator $F_{\omega, \chi}$ is a complex self-adjoint operator and $\mathop{\mathrm{ker}}F_{\omega, \chi}$ is precisely the set of constant functions since $\chi$ is a K{\"a}hler form.
\end{proof}

This implies that $F_{\omega, \chi}$ has index zero and is an isomorphism on the set of functions with integral zero. 

Let $X$ and $B$ be compact K{\"a}hler manifolds of dimensions $m + n$ and $n$ respectively and $\pi : X \rightarrow B$ be a holomorphic submersion of relative dimension $m$.
Suppose $\omega_X$ is a relatively K{\"a}hler metric, i.e., a closed $(1,1)$-form whose restriction $\omega_b$ to each fiber $X_b = \pi^{-1}(b)$ is a K{\"a}hler metric. Denote the vertical bundle $\mathop{\mathrm{ker}}d\pi$ by $\mathcal{V}$. 
Since $\omega_X$ is non-degenerate on a fiber, by setting 
$$ \mathcal{H}_x = \{ u \in T_x X \mid \omega_X (u, v)=0 \; \textrm{for all} \; v \in \mathcal{V}_x \},$$
we obtain the vertical-horizontal decomposition 
$$ TX = \mathcal{V} \oplus \mathcal{H}. $$
By this splitting, any tensor on $X$ decomposes into some terms via vertical and horizontal components. For a 2-tensor $A$, denote the purely vertical part by $A_\mathcal{V}$, the purely horizontal part by $A_\mathcal{H}$ and the mixed part by $A_m$, so $A =A_\mathcal{V} + A_m + A_\mathcal{H}$.

Given a $(p, p)$-form $\eta$ on $X$, we can integrate along fibers and associate a $(p-m, p-m)$-form $\pi_*(\eta)$ on $B$. Indeed, one reduces to the local case by a partition of unity argument on $B$. Take a sufficiently small neighborhood $U$ of $b \in B$ such that $ X|_U \cong Y \times U$ in smooth category. Then, locally on $\pi^{-1}(U)$,
a $2p$-form $\eta$ has two types of terms. 
The first type is the form of $\theta \wedge \pi^*\kappa$, where $\theta$ is a $2m$-form on $Y$ and $\kappa$ is a $2(p-m)$-form on $U$.
The second type is in the other cases. It is reduced to the case where we have only one term of the first type by linearity. 
Then, we define 
$$ \pi_*(\eta) = \left( \int_Y \theta \right) \kappa. $$
Moreover, since $\eta$ is a $(p,p)$-form and $\pi$ is a holomorphic submersion, the resulting form $\pi_*(\eta)$ is a $(p-m,p-m)$-form.
The following notation will be useful:

\begin{definition}\label{def:piB}
For a $(p,p)$-form $\eta$ on $X$, 
we define a $(p,p)$-form $\pi_B(\eta)$ on $B$ by 
$$ \pi_B(\eta)(b) =  V_b^{-1} \pi_*(\eta \wedge \omega_X^m)(b),$$ 
where $V_b = \int_{X_b} \omega_b^m$ is the volume of a fiber $X_b$. The volume $V_b$ is independent of $b \in B$ since $\omega_X$ is closed.
\end{definition}
In particular, for a function $f$ on $X$, 
$$\pi_B(f)(b) =  V_b^{-1} \int_{X_b} f \vert_{X_b} \, \omega_b^m. $$
Denote the subspace $\mathop{\mathrm{ker}}\Bigl( \pi_B : C^\infty \left( X \right) \rightarrow C^\infty \left( B \right) \Bigr)$ by $C_0^\infty \left( X \right)$. 
For any function $f$ on $X$, we have 
$ f = f_B + f_\mathcal{V},$
where $f_B = \pi^*\pi_B (f)$ and $f_\mathcal{V} = f - f_B \in C_0^\infty \left( X \right)$.

\begin{lemma}\label{lem:Kahler}
Let $\omega_X$ be a relatively K{\"a}hler form on $X$ and $\chi$ be a K{\"a}hler form on $X$. 
If $\omega_b$ is a solution of the $J_{\chi_b}$-equation on each fiber $X_b$ or the horizontal part of $\omega_X$ satisfies $\pi^*(\pi_B(\omega_X)_\mathcal{H}) = (\omega_X)_\mathcal{H}$, then $\pi_B(\chi_\mathcal{H})$ is a K{\"a}hler form on $B$.
\end{lemma}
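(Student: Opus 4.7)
My plan is to separate positivity of $\pi_B(\chi_\mathcal{H})$ from its closedness. Positivity is almost free: because $\mathcal{H}$ is defined as the $\omega_X$-orthogonal complement of $\mathcal{V}$, the form $\omega_X$ has no mixed component, i.e.\ $\omega_X = (\omega_X)_\mathcal{V} + (\omega_X)_\mathcal{H}$. For a nonzero tangent vector $v$ at $b \in B$, lift it to the horizontal representative $v^H_x \in \mathcal{H}_x$ at each $x \in X_b$; then $\chi_\mathcal{H}(v^H_x, \overline{v^H_x}) = \chi(v^H_x, \overline{v^H_x}) > 0$ since $\chi$ is K{\"a}hler, and averaging against $\omega_b^m$ over $X_b$ gives $\pi_B(\chi_\mathcal{H})(v,\bar v) > 0$. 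Neither hypothesis of the lemma enters here.

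For closedness, the observation driving everything is that $\pi_B(\chi)$ itself is closed, because $\chi$ and $\omega_X$ are $d$-closed and $d$ commutes with fiber integration. Writing $\chi = \chi_\mathcal{V} + \chi_m + \chi_\mathcal{H}$ and expanding $\omega_X^m$ via the binomial formula, a bidegree count in vertical/horizontal $(1,0)$- and $(0,1)$-types shows that the mixed component $\chi_m$ cannot combine with any binomial term to produce vertical bidegree $(m,m)$, so it pushes forward to zero. Hence
\begin{equation*}
\pi_B(\chi) = \pi_B(\chi_\mathcal{V}) + \pi_B(\chi_\mathcal{H}),
\end{equation*}
and it suffices to show that $\pi_B(\chi_\mathcal{V})$ is closed.

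The computation of $\pi_B(\chi_\mathcal{V})$ is where the two hypotheses enter. Only the binomial term with exactly one horizontal factor survives in $\pi_*(\chi_\mathcal{V} \wedge \omega_X^m)$, giving
\begin{equation*}
\pi_B(\chi_\mathcal{V}) = \frac{m}{V_b}\, \pi_*\bigl(\chi_\mathcal{V} \wedge (\omega_X)_\mathcal{V}^{m-1} \wedge (\omega_X)_\mathcal{H}\bigr).
\end{equation*}
Under the first hypothesis, the pointwise identity $m\chi_b \wedge \omega_b^{m-1} = c_b\, \omega_b^m$ on each fiber collapses the integrand to $\frac{c_b}{m}\omega_b^m \wedge (\omega_X)_\mathcal{H}|_{X_b}$, yielding $\pi_B(\chi_\mathcal{V}) = c_b\,\pi_B\bigl((\omega_X)_\mathcal{H}\bigr)$. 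Under the second hypothesis, the projection formula factors $\pi_B((\omega_X)_\mathcal{H})$ out of the integral, and the leftover fiber integral $\int_{X_b}\chi_b \wedge \omega_b^{m-1} = c_b V_b / m$ is constant in $b$ by the very definition of $c_b$, producing the same expression. Finally, the same binomial argument applied to $\omega_X^{m+1}$ gives $\pi_*(\omega_X^{m+1}) = (m+1) V_b\,\pi_B\bigl((\omega_X)_\mathcal{H}\bigr)$, so $\pi_B\bigl((\omega_X)_\mathcal{H}\bigr)$ is closed, and hence so is $\pi_B(\chi_\mathcal{V})$, and therefore $\pi_B(\chi_\mathcal{H}) = \pi_B(\chi) - \pi_B(\chi_\mathcal{V})$.

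The main subtlety I anticipate is the bookkeeping of vertical/horizontal bidegrees---identifying precisely which binomial-expansion terms survive push-forward---and verifying that the two a priori quite different hypotheses collapse $\pi_*(\chi_\mathcal{V} \wedge (\omega_X)_\mathcal{V}^{m-1} \wedge (\omega_X)_\mathcal{H})$ to the same multiple of $\pi_B((\omega_X)_\mathcal{H})$ for genuinely different reasons: fiberwise constancy of $\Lambda_{\omega_b}\chi_b$ in the first case, versus the projection formula combined with the integral identity defining $c_b$ in the second.
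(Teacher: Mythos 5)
Your proposal is correct and follows essentially the same route as the paper's proof: decompose $\pi_B(\chi) = \pi_B(\chi_\mathcal{V}) + \pi_B(\chi_\mathcal{H})$ (the mixed part pushes to zero by bidegree counting), note $\pi_B(\chi)$ is closed, show $\pi_B(\chi_\mathcal{V})$ is closed under either hypothesis, and deduce closedness of $\pi_B(\chi_\mathcal{H})$; positivity is handled by evaluating on horizontal lifts. Where the paper merely asserts that the first term "is closed if" either hypothesis holds, you supply the explicit mechanism---$\pi_B(\chi_\mathcal{V}) = c_b\,\pi_B\bigl((\omega_X)_\mathcal{H}\bigr)$ and $\pi_B\bigl((\omega_X)_\mathcal{H}\bigr) = \tfrac{1}{m+1}\pi_B(\omega_X)$ is closed---which is a useful clarification; the paper in turn is more careful about the positivity step (it verifies that the local-trivialization lift is $J$-compatible with the horizontal lift), a point your sketch takes for granted but which does hold since $\mathcal{H}$ is $J$-invariant.
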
  
\begin{proof}
Note that
\begin{align*} 
\pi_B(\chi) &= V_b^{-1} \pi_*(\chi \wedge \omega_X^m) \\
&= V_b^{-1} \pi_*\left(m \, \chi_\mathcal{V} \wedge (\omega_X)^{m-1}_\mathcal{V} \wedge (\omega_X)_\mathcal{H} \right) + V_b^{-1}\pi_*(\chi_\mathcal{H} \wedge (\omega_X)^m_\mathcal{V}).
\end{align*}
The left-hand side is closed since $\chi$ and $\omega_X$ are closed. The first term of the second line is closed if $\omega_b$ is a solution of the $J_{\chi_b}$-equation on each fiber $X_b$ or $\pi^*(\pi_B(\omega_X)_\mathcal{H}) = (\omega_X)_\mathcal{H}$. Therefore, the second term, which is $\pi_B(\chi_\mathcal{H})$, is closed.
The $J_B$-invariance also follows for the same reason, where $J_B$ is the complex structure of $B$. 
Lastly we prove $\pi_B(\chi_\mathcal{H})$ is positive, that is, $\pi_B(\chi_\mathcal{H})(b)(u, J_B u) >0$
for a point $b \in B$ and a vector $u \in T_bB$. Take a local trivialization $\psi: X_U \cong Y \times U$ on a neighborhood $U$ of $b$, such that the splitting $TX \cong \mathcal{V} \oplus \mathcal{H}$ coincides with $TY \oplus TU$ on a fiber $X_b$ via $\psi_*$. By the definition of $\pi_B$, we have
\begin{align*}
\pi_B(\chi_\mathcal{H})(b)(u, J_B \, u) &=\frac{1}{V_b} \int_Y (\psi^{-1})^* (\chi_\mathcal{H} \wedge \omega^m_X)(b) (u, J_B u) \\ &= \frac{1}{V_b} \int_Y (\psi^{-1})^* (\chi_\mathcal{H})(b) (u, J_B u) (\psi^{-1})^* \omega^m_X
\\ &= \frac{1}{V_b} \int _Y \chi_\mathcal{H}(\psi^{-1}_*u, \psi^{-1}_* J_B u) (\psi^{-1})^*\omega_X^m
\\ &= \frac{1}{V_b}  \int _Y \chi\left(\psi^{-1}_*u, \psi^{-1}_* J_B u\right) (\psi^{-1})^*\omega_X^m.
\end{align*}
Note that, by the definition of $\mathcal{H}$, we have $J \psi_*^{-1} u \in \mathcal{H}$. Since we also have 
$\pi_*\left(\psi^{-1}_* J_B u-J\psi_*^{-1}u\right)=0,$
we get $\psi^{-1}_* J_B u=J\psi_*^{-1}u$. Thus the positivity of $\chi$ implies the positivity of $\pi_B(\chi_\mathcal{H})$.
\end{proof}

\subsection{Approximate solutions}\label{sec:approximation}
In this subsection, we construct a family of approximate solutions. We freely use the following notations.
Let $X$ and $B$ be compact K{\"a}hler manifolds of dimensions $m+n$ and $n$ respectively.
Let $\pi : X \rightarrow B$ be a holomorphic submersion of relative dimension $m$.
Fix a K{\"a}hler form $\chi$ of $X$. 
Suppose we have a relative K{\"a}hler form $\omega_X$ on $X$ such that the restriction $\omega_b$ on a fiber $X_b$ is a solution of the $J_{\chi_b}$-equation for all $b \in B$.
Split the tangent bundle $TX$ with respect to $\omega_X$.
Suppose moreover we have a solution $\omega_B$ of the $J_{\pi_B(\chi_\mathcal{H})}$-equation on $B$. Here, a $(1,1)$-form $\pi_B(\chi_\mathcal{H})$ is K{\"a}hler by Lemma \ref{lem:Kahler}.
We define $\omega_k = \omega_X + k\omega_B$.

We first construct a family of approximate solutions of the $J_\chi$-equation. More precisely, we prove the following:

\begin{proposition}\label{prop:app}
For any $r\in\mathbb{Z}_{\ge 0}$, 
there exist
$\{ \phi_{i, B} \}_{i=0}^r \in C^\infty (B)$ and 
$ \{ \phi_{i, \mathcal{V}} \}_{i=0}^r \in C_0^\infty \left( X \right)$ 
such that
$$ \omega_{k, r} = \omega_k + \sqrt{-1} \partial \bar{\partial} \left( \sum_{i=0}^r \phi_{i, B} k^{2-i} + \sum_{i=0}^r \phi_{i, \mathcal{V}} k^{-i} \right) $$ satisfies 
$$ \Lambda_{\omega_{k, r}} \left( \chi \right) = \sum_{i=0}^r k^{-i} {c_i} + O \left(k^{-r-1} \right), $$
where $ {c_i} $ are constants.
\end{proposition}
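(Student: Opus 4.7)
The plan is to construct the correction potentials by induction on $r$, adding one pair $(\phi_{r+1,B}, \phi_{r+1,\mathcal{V}})$ per step so as to cancel the next order of the error in $\Lambda_{\omega_{k,r}}\chi$.

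For the base case $r=0$, I would take $\phi_{0,B} = \phi_{0,\mathcal{V}} = 0$, so $\omega_{k,0} = \omega_k$. In the vertical--horizontal splitting induced by $\omega_X$, the form $\omega_k = (\omega_X)_\mathcal{V} + (\omega_X)_\mathcal{H} + k\omega_B$ is block diagonal (by the definition of $\mathcal{H}$), and therefore
$$ \Lambda_{\omega_k}\chi = \Lambda_{(\omega_X)_\mathcal{V}}\chi_\mathcal{V} + \Lambda_{(\omega_X)_\mathcal{H} + k\omega_B}\chi_\mathcal{H}. $$
The first summand restricted to $X_b$ is $\Lambda_{\omega_b}\chi_b = c_b$, which is the constant $c_0$ by closedness of $\chi$ and $\omega_X$. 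The second summand is $O(k^{-1})$ because the horizontal metric's leading term is $k\omega_B$, so expanding its inverse as a geometric series in $k^{-1}$ produces the desired bound.

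For the inductive step, I would assume $\omega_{k,r}$ is already constructed with $\Lambda_{\omega_{k,r}}\chi = \sum_{i=0}^r c_i k^{-i} + k^{-(r+1)}\psi_{r+1} + O(k^{-(r+2)})$ for some $\psi_{r+1}\in C^\infty(X)$, and add the correction $k^{1-r}\sqrt{-1}\partial\bar\partial\phi_{r+1,B} + k^{-(r+1)}\sqrt{-1}\partial\bar\partial\phi_{r+1,\mathcal{V}}$. Applying the linearization $\delta\Lambda_\omega\chi = -(\chi,\delta\omega)_\omega$ and using that $\omega_{k,r}^{-1}$ is block diagonal with vertical part $O(1)$ and horizontal part $O(k^{-1})$, each new correction contributes precisely at order $k^{-(r+1)}$: the pairing with $\sqrt{-1}\partial\bar\partial\phi_{r+1,B}$, being purely horizontal, uses the horizontal inverse twice giving $k^{-2}$ that combines with the weight $k^{1-r}$; the pairing with the vertical part of $\sqrt{-1}\partial\bar\partial\phi_{r+1,\mathcal{V}}$ sees the $O(1)$ vertical inverse and, since $\omega_b$ solves the $J_{\chi_b}$-equation so the drift term of Definition \ref{def:F-op} vanishes fiberwise, equals $-F_{\omega_b,\chi_b}(\phi_{r+1,\mathcal{V}})$. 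Matching the order-$k^{-(r+1)}$ coefficient to a constant $c_{r+1}$ yields a coupled equation of the form
$$ F_{\omega_b,\chi_b}(\phi_{r+1,\mathcal{V}}) + (\chi_\mathcal{H}, \sqrt{-1}\partial\bar\partial\phi_{r+1,B})_{\omega_B} = \psi_{r+1} - c_{r+1} $$
on $X$.

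To solve this I would apply $\pi_B$. The fiber average of $F_{\omega_b,\chi_b}(\phi_{r+1,\mathcal{V}})$ vanishes by self-adjointness (Lemma \ref{lem:properties}), and pulling the $b$-dependent factors out of the fiber integral gives $\pi_B((\chi_\mathcal{H}, \sqrt{-1}\partial\bar\partial\phi_{r+1,B})_{\omega_B}) = (\pi_B(\chi_\mathcal{H}), \sqrt{-1}\partial\bar\partial\phi_{r+1,B})_{\omega_B} = -F_{\omega_B,\pi_B(\chi_\mathcal{H})}(\phi_{r+1,B})$, where the last step uses that $\omega_B$ solves the $J_{\pi_B(\chi_\mathcal{H})}$-equation. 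The reduced base equation $F_{\omega_B,\pi_B(\chi_\mathcal{H})}(\phi_{r+1,B}) = c_{r+1} - \pi_B(\psi_{r+1})$ is then solvable by Lemma \ref{lem:properties} on $B$ once $c_{r+1}$ is chosen to be the $\omega_B^n$-mean of $\pi_B(\psi_{r+1})$. With $\phi_{r+1,B}$ fixed, the residual fiberwise equation for $\phi_{r+1,\mathcal{V}}$ has right-hand side of zero fiber average by construction, so Lemma \ref{lem:properties} on each fiber produces a unique solution after normalizing it to lie in $C_0^\infty(X)$, with smooth dependence on $b\in B$ by standard elliptic regularity for families.

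The main obstacle will be the careful bookkeeping of the asymptotic expansion of $\Lambda_{\omega_{k,r}}\chi$: the horizontal block of $\omega_{k,r}^{-1}$ must be expanded as a geometric series in $k^{-1}$, the accumulated corrections from prior steps interact nonlinearly via higher differentials of $\Lambda$, and one must confirm that every contribution lands at the claimed order and that $\psi_{r+1}$ comes out smooth. Once the expansion is organized, the decoupling into a base $F$-equation and a fiberwise $F$-equation is dictated by the two $J$-equation hypotheses, which kill the drift term of Definition \ref{def:F-op} in each direction and render $F$ a self-adjoint elliptic operator with constant kernel.
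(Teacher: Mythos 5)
Your proposal is correct and follows essentially the same strategy as the paper: induct on $r$, identify the $k^{-(r+1)}$-coefficient of the error, apply $\pi_B$ to isolate a base equation for $\phi_{r+1,B}$ (using that the fiber average of $F_{\omega_b,\chi_b}$ vanishes and that $\omega_B$ solves the $J_{\pi_B(\chi_\mathcal{H})}$-equation), choose $c_{r+1}$ as the $\omega_B^n$-mean, and then kill the residual fiberwise with $\phi_{r+1,\mathcal{V}}$ via Lemma~\ref{lem:properties}. The paper does the two corrections sequentially rather than as a coupled system, and you have a stray sign on $F_{\omega_b,\chi_b}$ in the displayed coupled equation (the linearization at a solution is $+F$, so the correct equation is $-F_{\omega_b,\chi_b}(\phi_{r+1,\mathcal{V}})+(\chi_\mathcal{H},\sqrt{-1}\partial\bar\partial\phi_{r+1,B})_{\omega_B}=\psi_{r+1}-c_{r+1}$), but both are cosmetic.
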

We will prove this by induction. The notation $O \left( k^{-r} \right)$ is only pointwise in this subsection.
We first see that $\omega_k$ is a solution of the $J_\chi$-equation up to $O \left(k^{-1} \right)$.
\begin{definition}
For a $(1,1)$-form $\chi$ on $X$, we define a function $ \Lambda_\mathcal{V} \chi$ on $X$ by
$$ \Lambda_\mathcal{V} \chi  
= m \, \frac{\chi_\mathcal{V} \wedge \omega^{m-1}_\mathcal{V}}{\omega^m_\mathcal{V}}.$$
Also, we define a function $\Lambda_{\omega_B} \chi$ on $X$ by
$$\Lambda_{\omega_B} \chi = n \, \frac{\chi_\mathcal{H} \wedge (\pi^* \omega_B)^{n-1}}{(\pi^* \omega_B)^n}.$$
\end{definition}
By a simple calculation, we have the following:

\begin{lemma}\label{lem:O(1)-term}
For a $(1,1)$-form $\chi$ on $X$, we have
$${\Lambda_{\omega_k}} \chi = \Lambda_\mathcal{V} \chi + k^{-1} {\Lambda_{\omega_B}} \chi + O \left(k^{-2} \right). $$
\end{lemma}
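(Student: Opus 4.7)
The plan is to expand both the numerator $\chi\wedge\omega_k^{m+n-1}$ and the denominator $\omega_k^{m+n}$ of $\Lambda_{\omega_k}\chi=(m+n)\frac{\chi\wedge\omega_k^{m+n-1}}{\omega_k^{m+n}}$ by the binomial theorem in $\omega_k=\omega_X+k\omega_B$, and then read off the asymptotic expansion in $k^{-1}$. Because $\omega_B$ is pulled back from the $n$-dimensional base $B$, one has $\omega_B^{n+1}=0$, so both sums are truncated. After factoring $k^n$ out of the denominator, we get $\omega_k^{m+n}=k^n(D_\infty+k^{-1}D_1+O(k^{-2}))$ and similarly for the numerator, and then $\frac{N_\infty+k^{-1}N_1+\cdots}{D_\infty+k^{-1}D_1+\cdots}=\frac{N_\infty}{D_\infty}+k^{-1}\bigl(\frac{N_1}{D_\infty}-\frac{N_\infty D_1}{D_\infty^2}\bigr)+O(k^{-2})$ controls everything.

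To decide which of the binomial terms actually survive, I would use the vertical–horizontal bidegree. Write $\omega_X=(\omega_X)_\mathcal{V}+(\omega_X)_\mathcal{H}$ (no mixed part, by the very definition of $\mathcal{H}$), and $\chi=\chi_\mathcal{V}+\chi_m+\chi_\mathcal{H}$. A form on $X$ is nonzero only if its vertical bidegree is at most $(m,m)$ and its horizontal bidegree is at most $(n,n)$. Since $\omega_X$ and $\omega_B$ contribute only purely vertical or purely horizontal legs, $\chi_m$ — which carries exactly one vertical and one horizontal leg — can never complete to a form of top bidegree, so $\chi_m$ drops out entirely. The same bookkeeping yields the compact expression $\omega_k^{m+n}=\binom{m+n}{n}(\omega_X)_\mathcal{V}^m\wedge\bigl((\omega_X)_\mathcal{H}+k\omega_B\bigr)^n$ and identifies the few surviving terms in $\chi\wedge\omega_k^{m+n-1}$: at order $k^n$ only a $\chi_\mathcal{V}\wedge(\omega_X)_\mathcal{V}^{m-1}\wedge\omega_B^n$ contribution, and at order $k^{n-1}$ exactly a $\chi_\mathcal{H}\wedge(\omega_X)_\mathcal{V}^m\wedge\omega_B^{n-1}$ piece together with a $\chi_\mathcal{V}\wedge(\omega_X)_\mathcal{V}^{m-1}\wedge(\omega_X)_\mathcal{H}\wedge\omega_B^{n-1}$ piece.

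With these explicit terms, the leading coefficient $N_\infty/D_\infty$ simplifies via the identity $(m+n)\binom{m+n-1}{n}/\binom{m+n}{n}=m$ to $m\,\chi_\mathcal{V}\wedge(\omega_X)_\mathcal{V}^{m-1}/(\omega_X)_\mathcal{V}^m=\Lambda_\mathcal{V}\chi$. At order $k^{-1}$, $N_1/D_\infty$ yields $\Lambda_{\omega_B}\chi$ plus a term proportional to $\Lambda_\mathcal{V}\chi\cdot (\omega_X)_\mathcal{H}\wedge\omega_B^{n-1}/\omega_B^n$, while $N_\infty D_1/D_\infty^2$ yields another term of exactly that same shape; the content of the proof is that these two terms cancel exactly, leaving $\Lambda_{\omega_B}\chi$. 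The main obstacle is just this combinatorial bookkeeping: one has to track binomial coefficients and powers precisely so that the $(\omega_X)_\mathcal{H}$-proportional contributions at order $k^{-1}$ drop out, but once that cancellation is observed the lemma follows directly.
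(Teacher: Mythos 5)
Your approach is correct and is essentially the same computation as the paper's, but the paper organizes it more economically so that the cancellation you describe never has to be observed. You expand the numerator $N=\chi\wedge\omega_k^{m+n-1}$ and denominator $D=\omega_k^{m+n}$ separately, factor out $k^n$, and note that the cross-term $N_\infty D_1/D_\infty^2$ cancels the $(\omega_X)_\mathcal{H}$-proportional piece of $N_1/D_\infty$ at order $k^{-1}$. The paper instead splits the fraction \emph{first}, using the vertical/horizontal bidegree bookkeeping (the same one you use to kill $\chi_m$) to write
\begin{align*}
(m+n)\frac{\chi\wedge\omega_k^{m+n-1}}{\omega_k^{m+n}}
&= m\,\frac{\chi_\mathcal{V}\wedge(\omega_X)_\mathcal{V}^{m-1}\wedge\bigl((\omega_X)_\mathcal{H}+k\omega_B\bigr)^n}{(\omega_X)_\mathcal{V}^m\wedge\bigl((\omega_X)_\mathcal{H}+k\omega_B\bigr)^n}
+ n\,\frac{\chi_\mathcal{H}\wedge(\omega_X)_\mathcal{V}^m\wedge\bigl((\omega_X)_\mathcal{H}+k\omega_B\bigr)^{n-1}}{(\omega_X)_\mathcal{V}^m\wedge\bigl((\omega_X)_\mathcal{H}+k\omega_B\bigr)^n}.
\end{align*}
In the first fraction the factor $\bigl((\omega_X)_\mathcal{H}+k\omega_B\bigr)^n$ cancels identically between numerator and denominator, so the vertical piece equals $\Lambda_\mathcal{V}\chi$ \emph{exactly}, with no $k^{-1}$ correction at all; the entire $k^{-1}$ term then comes only from binomially expanding the second fraction, yielding $\Lambda_{\omega_B}\chi$ immediately. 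What this buys you is that the ``combinatorial bookkeeping'' you flag as the main obstacle disappears: there is no pair of $(\omega_X)_\mathcal{H}$-proportional terms to match and cancel, because the common factor has already been struck out. Your route is valid, but you would still need to actually carry out and verify that cancellation, whereas the paper's factorization makes it automatic.
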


\begin{proof}
Since $\omega_X$ has no mixed term by the definition of the splitting,
\begin{align*}
&\Lambda_{\omega_k} \chi \\
=&\left( m+n \right) \frac{\chi \wedge \left( \omega_X + k \omega_B \right)^{m+n-1}}{\left( \omega_X + k \omega_B \right)^{m+n}} \\
=& \, m \frac
{ \chi_\mathcal{V} \wedge \left( \omega_X \right)_\mathcal{V}^{m-1} \wedge \Bigl( \left( \omega_X \right)_\mathcal{H} + k \omega_B \Bigr)^n}
{ \left( \omega_X \right)_\mathcal{V}^m \wedge \Bigl( \left( \omega_X \right)_\mathcal{H} + k \omega_B \Bigr)^n }
 + n \frac{ \chi_\mathcal{H}
  \wedge \left( \omega_X \right)_\mathcal{V}^m 
  \wedge \Bigl( \left( \omega_X \right)_\mathcal{H} + k \omega_B \Bigr)^{n-1}}
  { \left( \omega_X \right)_\mathcal{V}^m \wedge \Bigl( \left( \omega_X \right)_\mathcal{H} + k \omega_B \Bigr)^n} \\
=& \, \Lambda_\mathcal{V} \chi
   + n \frac{\chi_\mathcal{H} \wedge \left( {\displaystyle \sum_{i=0}^{n-1}} \begin{pmatrix} n-1 \\ i \end{pmatrix} k^{n-1-i} \left( \omega_X \right)_\mathcal{H}^i \wedge \omega_B^{n-1-i} \right)}
   { \left( {\displaystyle \sum_{i=0}^n} \begin{pmatrix} n \\i \end{pmatrix} k^{n-i} \left( \omega_X \right)_\mathcal{H}^i \wedge \omega_B^{n-i} \right)} \\
= & \, \Lambda_\mathcal{V} \chi + k^{-1} \Lambda_{\omega_B} \chi + O \left( k^{-2} \right).
\qedhere
\end{align*} 
\end{proof}

Since $\omega_b$ is a solution of the $J_{\chi_b}$-equation on each fiber, the lemma implies $\omega_k$ satisfies 
$$ \Lambda_{\omega_k} \chi = {c_0} + O(k^{-1}),$$ which is the case of $r = 0$ of Proposition \ref{prop:app}.

We will perturb $\omega_k$ to a solution of the $J_\chi$-equation up to $O(k^{-2})$ to prove the case of $r=1$ of Proposition \ref{prop:app}. 
Denote the $k^{-1}$-term $\Lambda_{\omega_B} \chi$ of $\Lambda_{\omega_k}\chi$
by $f_1$, and decompose $f_1 = \left( f_1 \right)_B + \left( f_1 \right)_\mathcal{V}$ as in the argument below Definition \ref{def:piB}. 
Since $\Lambda_{\omega_B}$ and $\pi_B$ are commutative by the definition of fiber integral as in Subsection \ref{sec:pre} and $\omega_B$ is a solution of $J_{\pi_B(\chi_\mathcal{H})}$-equation, the function $\left( f_1 \right)_B$ is constant. We denote this constant by $c_1$. Hence, we only need to delete the term $\left( f_1 \right)_\mathcal{V}$. 
This can be done by adding a fiberwise mean value zero function to $\omega_k$. 

\begin{definition}
For $\phi \in C^\infty(X)$, we define the operator $F_{ \left( \omega_X \right)_\mathcal{V}, \chi_\mathcal{V}}$ by 
$$\Bigl( F_{ \left( \omega_X \right)_\mathcal{V}, \chi_\mathcal{V}} (\phi) \Bigr) \Big\vert_{X_b} = F_{ \omega_b, \chi_b} \left( \phi\vert_{X_b} \right).$$
\end{definition}

\begin{lemma}\label{lem:k^-1}
 For $\phi \in C^\infty(X)$, we have
$$\frac{d}{dt}\Big|_{t = 0} \Lambda_{\omega_k + t \sqrt{-1} \partial \bar{\partial} \phi } (\chi) 
= F_{ \left( \omega_X \right)_\mathcal{V}, \chi_\mathcal{V}} \left(\phi \right) + O \left( k^{-1} \right).$$
\end{lemma}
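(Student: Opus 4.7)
The plan is to substitute the linearization formula \eqref{eq:linearisation} into the statement, so that
$$\frac{d}{dt}\Big|_{t=0}\Lambda_{\omega_k+t\sqrt{-1}\partial\bar\partial\phi}(\chi)=-\bigl(\chi,\sqrt{-1}\partial\bar\partial\phi\bigr)_{\omega_k},$$
and then expand the right-hand side in powers of $k^{-1}$ using the $\mathcal{V}\oplus\mathcal{H}$ decomposition.

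First I would exploit the block structure of $\omega_k$. By the very definition of $\mathcal{H}$, the form $\omega_X$ has no mixed component, hence $\omega_k=(\omega_X)_\mathcal{V}+(\omega_X)_\mathcal{H}+k\omega_B$ is block diagonal with respect to $\mathcal{V}\oplus\mathcal{H}$. The associated inverse metric $g_k^{-1}$ is therefore also block diagonal: its vertical block is $((\omega_X)_\mathcal{V})^{-1}$, independent of $k$, while its horizontal block admits the expansion $k^{-1}\omega_B^{-1}+O(k^{-2})$. Substituting this into the coordinate expression $(\chi,\sqrt{-1}\partial\bar\partial\phi)_{\omega_k}=g_k^{m\bar q}g_k^{p\bar n}\chi_{p\bar q}(\sqrt{-1}\partial\bar\partial\phi)_{m\bar n}$, every contraction that involves even a single horizontal inverse metric factor costs at least one power of $k^{-1}$. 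The $O(1)$ contribution is thus exactly the fiberwise pairing $(\chi_\mathcal{V},(\sqrt{-1}\partial\bar\partial\phi)_\mathcal{V})_{(\omega_X)_\mathcal{V}}$.

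Next I would identify this leading term with $F_{(\omega_X)_\mathcal{V},\chi_\mathcal{V}}(\phi)$. Choosing local coordinates $(z^\alpha,w^i)$ adapted to $\pi$, the purely vertical component of $\sqrt{-1}\partial\bar\partial\phi$ is $\sqrt{-1}\partial_\alpha\partial_{\bar\beta}\phi\,dz^\alpha\wedge d\bar z^\beta$, and its restriction to a fiber $X_b=\{w=b\}$ coincides tautologically with $\sqrt{-1}\partial\bar\partial(\phi|_{X_b})$. Combining this with the hypothesis that $\omega_b$ solves the $J_{\chi_b}$-equation, $\Lambda_{\omega_b}\chi_b=c_b$ is constant on every fiber, so the first-order term $(\partial(\Lambda_{\omega_b}\chi_b),\bar\partial(\phi|_{X_b}))_{\omega_b}$ in Definition \ref{def:F-op} drops out and the fiberwise restriction of the leading piece is exactly $F_{\omega_b,\chi_b}(\phi|_{X_b})=(F_{(\omega_X)_\mathcal{V},\chi_\mathcal{V}}(\phi))|_{X_b}$. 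Since this holds for every $b\in B$, the global identification of the $O(1)$ part with $F_{(\omega_X)_\mathcal{V},\chi_\mathcal{V}}(\phi)$ follows.

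The main obstacle is essentially bookkeeping: one has to verify carefully that mixed and purely horizontal components of $\sqrt{-1}\partial\bar\partial\phi$ only contribute to $O(k^{-1})$, which ultimately comes down to the fact that any contraction against a horizontal slot of $g_k^{-1}$ gains a factor of $k^{-1}$, and that the vertical-vertical pairing genuinely reduces to the fiberwise $J$-linearization of Lemma \ref{lem:properties}. Once one trusts the block decomposition, the identification is immediate, and the assertion of the lemma is just the content of the leading term together with the vanishing of the $\partial\Lambda$-term guaranteed by the fiberwise $J$-equation.
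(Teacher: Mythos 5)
Your proposal is correct and follows essentially the same approach as the paper: expand $g_k^{-1}$ in block form with respect to $\mathcal{V}\oplus\mathcal{H}$, observe that the vertical block is $k$-independent while the horizontal block is $O(k^{-1})$, and identify the leading term of $-g_k^{m\bar q}\partial_m\partial_{\bar n}\phi\,g_k^{p\bar n}\chi_{p\bar q}$ with the fiberwise operator. You are slightly more explicit than the paper in noting that the identification of the vertical-vertical pairing with $F_{(\omega_X)_\mathcal{V},\chi_\mathcal{V}}(\phi)$ requires the $\partial(\Lambda_{\omega_b}\chi_b)$ term in Definition \ref{def:F-op} to drop out, which it does because $\omega_b$ solves the fiberwise $J$-equation; the paper leaves this implicit.
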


\begin{proof}
As in Subsection \ref{sec:pre}, 
$$\frac{d}{dt}\Big|_{t = 0} \Lambda_{\omega_k + t \sqrt{-1} \partial \bar{\partial} \phi } (\chi) 
= {- {\left(g_k \right)^{m \bar{q}}} \, {\partial_m \partial_{\bar{n}} \phi} \, {\left( g_k \right)^{p \bar{n}}} \, {\chi_{p \bar{q}}}},$$
where $g_k$ is the metric tensor corresponding to $\omega_k$. 
We expand $g_k^{-1}$ in $k$ as a matrix in terms of vertical and horizontal parts, 
\begin{align*}
 {g_k}^{-1} &= \begin{bmatrix}
    \left( \omega_X \right)_\mathcal{V} & 0 \\
     0 & \left( \omega_X \right)_\mathcal{H} + k \omega_B   
\end{bmatrix}^{-1}
= \begin{bmatrix}
  \left( \omega_X \right)_\mathcal{V}^{-1} & 0 \\
  0 & k^{-1} \Bigl( k^{-1} \left( \omega_X \right)_\mathcal{H} + \omega_B \Bigr)^{-1}
\end{bmatrix} \\
&= \begin{bmatrix}
   \left( \omega_X \right)_\mathcal{V}^{-1} & 0 \\
   0 & 0
\end{bmatrix} 
+k^{-1} 
\begin{bmatrix}
0  & 0 \\ 0 & \omega_B^{-1}
\end{bmatrix} + O \left( k^{-2} \right).
\end{align*}
Therefore we see that 
\begin{align*}
{- {\left(g_k \right)^{m \bar{q}}}{\partial_m \partial_{\bar{n}} \phi} {\left( g_k \right)^{p \bar{n}}} {\chi_{p \bar{q}}}} 
&= {- {\left(g_X \right)_\mathcal{V}^{m \bar{q}}}\left( {\partial_m \partial_{\bar{n}} \phi}\right) {\left( g_X \right)_\mathcal{V}^{p \bar{n}}} {\chi_{p \bar{q}}}} + O \left(k^{-1} \right) \\ 
&= F_{ \left( \omega_X \right)_\mathcal{V}, \chi_\mathcal{V}} \left(\phi \right) + O \left( k^{-1} \right),
\end{align*}
where $\left(g_X \right)_\mathcal{V}$ the metric tensor on the vertical subbundle $\mathcal{V}$ of $TX$ corresponding to $\left(\omega_X\right)_\mathcal{V}$.
\end{proof}

By Lemma \ref{lem:properties}, there exists a unique function $\left( \phi_{1, \mathcal{V}} \right)_b \in C^\infty_0(X_b)$ for all $b \in B$  such that
$$F_{ \omega_b, \chi_b} \Bigl( \left( \phi_{1, \mathcal{V}} \right)_b \Bigr) = - \left( f_1 \right)_\mathcal{V}\vert_{X_b},$$
where $C^\infty_0(X_b)$ denotes the set of functions on $X_b$ with integral zero with respect to $\omega_X\vert_{X_b}$. 
Since the operator $F_{ \omega_X\mid_{X_b}, \chi\mid_{X_b}}$ is smooth in $b$, the implicit function theorem implies that there exists a unique function $\phi_{1, \mathcal{V}} \in C^\infty_0(X)$ such that 
$$ \phi_{1, \mathcal{V}}\mid_{X_b} = \left( \phi_{1, \mathcal{V}} \right)_b.$$
By combining this with Lemma \ref{lem:k^-1} and Lemma \ref{lem:O(1)-term}, we have 
\begin{align*}
\Lambda_{\omega_k + k^{-1} \sqrt{-1} \partial \bar{\partial} \phi_{1, \mathcal{V}}} (\chi)
&= \Lambda_\mathcal{V} \chi + k^{-1} \Bigl( c_1 + \left(f_1 \right)_\mathcal{V} + F_{ \left( \omega_X \right)_\mathcal{V}, \chi_\mathcal{V}} \left(\phi_{1, \mathcal{V}} \right) \Bigr) + O \left(k^{-2} \right) \\
&= c_0 + k^{-1} c_1 + O \left( k^{-2} \right).
\end{align*}
This completes the proof for the case of $r = 1$ of Proposition \ref{prop:app}.

To prove Proposition \ref{prop:app}, we repeat perturbing in the same manner. Suppose that we have the solution $\omega_{k, r}$ as in Proposition \ref{prop:app} for $r \ge 1$.
We need to calculate the linearization of the trace operator at $\omega_{k, r}$.

\begin{lemma}\label{lem:general_linearisation}
For a function $\phi \in C^\infty(X)$, we have
$$\frac{d}{dt}\Big|_{t = 0} \Lambda_{\omega_{k, r} + t \sqrt{-1} \partial \bar{\partial} \phi } (\chi) 
= F_{\left(\omega_X \right)_\mathcal{V}, \chi_\mathcal{V}} \left(\phi \right) + k^{-1} D_1 \left( \phi \right) + k^{-2} D_2 \left( \phi \right) + O \left( k^{-3} \right),$$
where the operators $D_1$ and $D_2$ satisfy
$D_1(\phi) = 0$ and $\pi_B \Bigl( D_2 (\phi) \Bigr) = F_{\omega_B, \pi_B(\chi_\mathcal{H})} (\phi)$
for $\phi \in C^\infty(B)$. Here, we suppress the pullbacks via $\pi$,
so for a function $\phi\in C^\infty(B)$ its pullback to $X$ is also denoted by $\phi$.
\end{lemma}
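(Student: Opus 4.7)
The plan is to substitute the pointwise linearization formula
\[
\frac{d}{dt}\Big|_{t=0}\Lambda_{\omega_{k,r}+t\sqrt{-1}\partial\bar\partial\phi}(\chi) = -\,(g_{k,r})^{m\bar q}(\partial_m\partial_{\bar n}\phi)(g_{k,r})^{p\bar n}\chi_{p\bar q}
\]
of Equation~\eqref{eq:linearisation} into the $\mathcal{V}\oplus\mathcal{H}$-block decomposition of $\omega_{k,r}$ and to expand the inverse metric $g_{k,r}^{-1}$ via the block Schur complement. First I would record that $\omega_X$ has no mixed block by the very definition of $\mathcal{H}$ as the $\omega_X$-orthogonal complement of $\mathcal{V}$, and that $\sqrt{-1}\partial\bar\partial\phi_{i,B}$ is pulled back from $B$ and hence is purely horizontal. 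Combined with the fact that the inductive construction of Proposition~\ref{prop:app} begins with $\phi_{0,\mathcal{V}}=0$, this shows that $\omega_{k,r}$ has vertical block $V_{k,r}=(\omega_X)_\mathcal{V}+O(k^{-1})$, horizontal block $H_{k,r}=k\omega_B+O(1)$, and mixed block $M_{k,r}=O(k^{-1})$. Inverting yields
\[
g_{k,r}^{-1} = a_0 + k^{-1}a_1 + k^{-2}a_2 + O(k^{-3}),
\]
where $a_0$ is supported on the vertical block with value $((\omega_X)_\mathcal{V})^{-1}$ and $a_1$ is block diagonal at the relevant order with horizontal part $\omega_B^{-1}$, the mixed off-diagonal blocks of $g_{k,r}^{-1}$ only appearing from $a_2$ onwards.

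Substituting this expansion into the linearization and collecting powers of $k^{-1}$, the $k^0$ coefficient is $-a_0^{m\bar q}a_0^{p\bar n}(\partial_m\partial_{\bar n}\phi)\chi_{p\bar q}$, in which all four indices are forced vertical; on each fiber $X_b$ this is exactly the fiberwise linearization of $\Lambda_{\omega_b}\chi_b$ at the solution $\omega_b$, so it equals $F_{(\omega_X)_\mathcal{V},\chi_\mathcal{V}}(\phi)$. I then define $D_1(\phi)$ and $D_2(\phi)$ as the coefficients of $k^{-1}$ and $k^{-2}$, respectively, giving the claimed expansion.

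For the two claims about $\phi\in C^\infty(B)$, the key observation is that $\eta:=\sqrt{-1}\partial\bar\partial\phi=\pi^*\sqrt{-1}\partial\bar\partial\phi$ is purely horizontal, so $\eta_{m\bar n}=0$ whenever either index is vertical. Every term in the $k^{-1}$ coefficient contains exactly one $a_0$ factor, which forces a vertical index on $\eta$ and therefore vanishes; this gives $D_1(\phi)=0$. For $D_2(\phi)$, the terms involving any $a_0$ factor vanish for the same reason, leaving only $-a_1^{m\bar q}a_1^{p\bar n}\eta_{m\bar n}\chi_{p\bar q}$ with $m,n$ horizontal; since $a_1$ is block diagonal at this order, $p$ and $q$ are also forced horizontal, and the expression reduces to $-(\chi_\mathcal{H},\sqrt{-1}\partial\bar\partial\phi)_{\omega_B}$. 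Applying $\pi_B$ and using that $\omega_B$ and $\sqrt{-1}\partial\bar\partial\phi$ are constant along fibers, the fiber average slides into the pointwise pairing to produce $-(\pi_B(\chi_\mathcal{H}),\sqrt{-1}\partial\bar\partial\phi)_{\omega_B}$, which coincides with $F_{\omega_B,\pi_B(\chi_\mathcal{H})}(\phi)$ because $\omega_B$ solves the $J_{\pi_B(\chi_\mathcal{H})}$-equation and hence the zeroth-order term of $F_{\omega_B,\pi_B(\chi_\mathcal{H})}$ vanishes.

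The main obstacle will be keeping the block-Schur-complement expansion precise enough to guarantee that the mixed off-diagonal blocks of $g_{k,r}^{-1}$ only enter from order $k^{-2}$, so that both $D_1(\phi)$ and the relevant part of $D_2(\phi)$ can be read off from block-diagonal data. This ultimately reduces to checking that the induction of Proposition~\ref{prop:app} can be carried out with $\phi_{0,\mathcal{V}}=0$, so that $V_0=(\omega_X)_\mathcal{V}$ and $M_{k,r}=O(k^{-1})$; this is consistent with the fact that the initial approximation $\omega_{k,0}=\omega_k$ already solves the fiberwise $J_{\chi_b}$-equation by hypothesis.
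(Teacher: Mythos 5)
Your proof is correct and follows essentially the same approach as the paper: both start from the pointwise linearization formula, block-decompose $g_{k,r}^{-1}$ in powers of $k^{-1}$, observe (using $\phi_{0,\mathcal{V}}=0$, so the mixed block of $g_{k,r}$ is $O(k^{-1})$) that the off-diagonal blocks of $g_{k,r}^{-1}$ enter only from order $k^{-2}$, and then specialize to $\phi\in C^\infty(B)$ using that $\sqrt{-1}\partial\bar\partial\pi^*\phi$ is purely horizontal. The only cosmetic slip is calling the drift term $-(\partial(\Lambda_{\omega_B}\pi_B(\chi_\mathcal{H})),\bar\partial\phi)_{\omega_B}$ the ``zeroth-order term'' of $F_{\omega_B,\pi_B(\chi_\mathcal{H})}$; it is a first-order term, but your reason for its vanishing (constancy of $\Lambda_{\omega_B}\pi_B(\chi_\mathcal{H})$) is exactly right.
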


\begin{proof}
The proof is similar to the one of Lemma \ref{lem:k^-1}, but we need to deal with $g_{k,r}$ here instead of $g_k$. 
Denote $\Bigl( \displaystyle\sum_{i=0}^r k^{-i} \phi_{i, \mathcal{V}} \Bigr)$ by $\left( \phi_\mathcal{V} \right)_{k,r}$ and 
$\Bigl( \displaystyle\sum_{i=0}^r k^{-i} \phi_{i, B} \Bigr)$ by $\left( \phi_B \right)_{k,r}$ for simplicity. 
Then, we have
\begin{align*}
&{g_{k,r}}^{-1} \\ 
= &\begin{bmatrix}
    \left( \omega_k \right)_\mathcal{V} + \Bigl(\sqrt{-1} \partial \bar{\partial} \left( \phi_\mathcal{V} \right)_{k,r}\Bigr)_\mathcal{V}
    & \Bigl(\sqrt{-1} \partial \bar{\partial} \left( \phi_\mathcal{V} \right)_{k,r}\Bigr)_m  \\
     \Bigl(\sqrt{-1} \partial \bar{\partial} \left( \phi_\mathcal{V} \right)_{k,r}\Bigr)_m
     & \left( \omega_k \right)_\mathcal{H} + \Bigl(\sqrt{-1} \partial \bar{\partial} \bigl(\left( \phi_\mathcal{V} \right)_{k,r} + \left( \phi_B \right)_{k,r} \bigr)\Bigr)_\mathcal{H}
\end{bmatrix}^{-1}.
\end{align*}  
The $O\left( 1 \right)$-term of $g_{k, r}^{-1}$ is 
$\begin{bmatrix}
   \left( \omega_X \right)_\mathcal{V}^{-1} & 0 \\
   0 & 0
\end{bmatrix}$ in the same way as the proof of Lemma \ref{lem:k^-1}, since $\left( \phi_\mathcal{V} \right)_{k,r}$ is $O \left( k^{-1} \right)$. By using the formula of the inverse of a matrix and recalling that $\left( \phi_\mathcal{V} \right)_{k,r}$ is $O \left( k^{-1} \right)$, we see that the $k^{-1}$-term of $g_{k, r}^{-1}$ is 
$ \begin{bmatrix}
*  & 0 \\ 0 & \omega_B^{-1}
\end{bmatrix}$.
Let $\phi \in C^\infty(B)$.
As $\Bigl(\sqrt{-1}\partial\bar{\partial} (\pi^*\phi)\Bigr)_\mathcal{V} = \Bigl(\sqrt{-1}\partial\bar{\partial} (\pi^*\phi)\Bigr)_m =0$, the operator $D_1$ satisfies
$D_1 (\pi^*\phi) = 0$. 
The operator $D_2$ satisfies
\begin{align*}
\pi_B \Bigl( D_2(\phi) \Bigr) 
&= \pi_B \Bigl(- \bigl( \chi_\mathcal{H} , (\sqrt{-1} \partial \bar{\partial} \pi^*\phi)_\mathcal{H}\bigl)_{\omega_B} \Bigr) \\
&={- ( \pi_B(\chi_\mathcal{H}) , \sqrt{-1} \partial \bar{\partial} \phi)_{\omega_B} }\\
&= F_{\omega_B, \pi_B(\chi_\mathcal{H})} (\phi).
\qedhere
\end{align*}
\end{proof}

\begin{proof}[Proof of Proposition \ref{prop:app}]
Denote the $k^{-r-1}$-term of $\Lambda_{\omega_{k,r}} \chi$ by $f_{r+1} = \left( f_{r+1} \right)_B + \left( f_{r+1} \right)_\mathcal{V}$, where $\left( f_{r+1} \right)_B \in C^\infty(B)$ and $\left( f_{r+1} \right)_\mathcal{V} \in C^\infty_0(X)$.
Since $\omega_B$ is a solution of the $J_{\pi_B(\chi_\mathcal{H})}$-equation, Lemma \ref{lem:properties} implies that there exists a function $\phi_{r+1,B} \in C^\infty(B)$ such that $$F_{\omega_B,\pi_B(\chi_\mathcal{H})} \left( \phi_{r+1,B} \right) 
= -\left( f_{r+1} \right)_B + c_{r+1},$$ 
where $c_{r+1} = \Bigl(\int_B \omega_B^b \Bigr)^{-1}\Bigl( \int_B \left( f_{r+1} \right)_B \omega_B^b \Bigr)$. Hence Lemma \ref{lem:general_linearisation} implies that  we have
\begin{align*}
\Lambda_{\omega_{k, r} + k^{-r+1} \sqrt{-1} \partial \bar{\partial} \phi_{r+1, B}} (\chi) 
&= \sum_{i=0}^r k^{-i} {c_i} + k^{-r-1} \Bigl( f_{r+1} + D_2 \left( \phi_{r+1, B} \right) \Bigr) + O(k^{-r-2}) \\
&= \sum_{i=0}^r k^{-i} {c_i} + k^{-r-1} \Bigl( c_{r+1} + \left( f_{r+1} \right)'_\mathcal{V} \Bigr) + O(k^{-r-2})
\end{align*}
for some function $ \left( f_{r+1}\right)'_\mathcal{V} \in C^\infty_0(X)$. 
For the same reason stated below Lemma \ref{lem:k^-1}, there exists a function $\phi_{r+1, \mathcal{V}} \in C^\infty_0(X)$ such that 
$$ F_{\left( \omega_X \right)_\mathcal{V}, \chi_\mathcal{V}} 
\left( \phi_{r+1, \mathcal{V}} \right) 
= - \left( f_{r+1}\right)'_\mathcal{V}.$$
By using Lemma \ref{lem:general_linearisation} again, we obtain 
\begin{align*}
&\, \Lambda_{\omega_{k, r} + k^{-r+1} \sqrt{-1} \partial \bar{\partial} \phi_{r+1, B} + k^{-r-1} \sqrt{-1} \partial \bar{\partial} \phi_{r+1, \mathcal{V}}} (\chi) \\
=& \,  \sum_{i=0}^r k^{-i} {c_i} + k^{-r-1} \Bigl( c_{r+1} + \left( f_{r+1} \right)'_\mathcal{V}  + F_{\left( \omega_X \right)_\mathcal{V}, \chi_\mathcal{V}} 
\left( \phi_{r+1, \mathcal{V}} \right) \Bigr) + O(k^{-r-2})\\
=& \, \sum_{i=0}^{r+1} k^{-i} {c_i} + O(k^{-r-2}).
\end{align*}
This completes the proof of Proposition \ref{prop:app}.
\end{proof}

\subsection{First proof of Theorem \ref{thm:main}}\label{sec:firstproof}
We provide a proof of Theorem \ref{thm:main} by using the first approximation $\omega_{k,1}$ and the $\mathcal{C}$-subsolution criterion described below.

\begin{theorem}[{\cite{SW}}]\label{thm:SW}
    Let $(X,\chi)$ be an $n$-dimensional compact K{\"a}hler manifold and $\omega$ a K{\"a}hler form. Then the following are equivalent:
    \begin{enumerate}[font=\normalfont]
        \item There exists a solution $\omega'\in[\omega]$ of the $J_\chi$-equation.
        \item There exists a $\mathcal{C}$-subsolution $\omega'\in[\omega]$ of the $J_\chi$-equation, that is, the K{\"a}hler form $\omega'\in[\omega]$ satisfies $c\omega'^{n-1}-(n-1)\chi\wedge\omega'^{n-2}>0$.
    \end{enumerate}
\end{theorem}

\begin{proof}[Proof of Theorem \ref{thm:main}]
    By Theorem \ref{thm:SW}, it suffices to find a $\mathcal{C}$-subsolution in $[\omega_k]$ for $k\gg0$. We prove that $\omega_{k,1}$ is a $\mathcal{C}$-subsolution for $k\gg 0$.
    By a simple calculation, we can expand $c=\sum k^{-i}c_i$ as 
    \begin{equation}\label{eq:cexpan}
    \begin{split}
    c=&(m+n) \, \frac{\int_X \chi \wedge \omega_k^{m+n-1}}{\int_X \omega_k^{m+n}}\\
    =& \frac{m \int_X \chi_\mathcal{V} \wedge \left(\omega_X \right)_\mathcal{V}^{m-1} \wedge \left(\omega_k\right)_\mathcal{H}^n 
    +n \int_X \chi_\mathcal{H} \wedge \left(\omega_X\right)_\mathcal{V}^m \wedge \left( \omega_k\right)_\mathcal{H}^{n-1}}
    {\int_X \left(\omega_X\right)_\mathcal{V}^m \wedge \left( \omega_k\right)_\mathcal{H}^n}\\
    =& m \, \frac{\int_X \chi_\mathcal{V} \wedge \left(\omega_X\right)_\mathcal{V}^{m-1} \wedge \omega_B^n}{\int_X \left(\omega_X\right)_\mathcal{V}^m \wedge \omega_B^n} + k^{-1}\left(C_1 +n \, \frac{\int_X \left(\omega_X\right)_\mathcal{V}^m \wedge \chi_\mathcal{H} \wedge \omega_B^{n-1}}{\int_X \left(\omega_X\right)_\mathcal{V}^m \wedge \omega_B^n}\right) +O\left(k^{-2}\right),
    \end{split}
    \end{equation}
    where $C_1$ is the constant calculated as
    \begin{equation*}
    C_1 = \frac{\int_X \left(\Lambda_\mathcal{V} \chi\right) \left(\Lambda_{\omega_B} \omega_X\right) \omega_X^m \wedge \omega_B^n}{\int_X \omega_X^m \wedge \omega_B^n}-
    \frac{\int_X \left(\Lambda_\mathcal{V} \chi\right) \omega_X^m \wedge \omega_B^n}{\int_X \omega_X^m \wedge \omega_B^n}\, \frac{\int_X \left(\Lambda_{\omega_B} \omega_X\right) \omega_X^m \wedge \omega_B^n}{\int_X \omega_X^m \wedge \omega_B^n}.
    \end{equation*}
    The coefficient $c_0$ of the $k^0$-order term of \eqref{eq:cexpan} is the constant of the $J_{\chi_b}$-equation. Since $\omega_b$ is a solution of the $J_{\chi_b}$-equation on each fiber $X_b$, the constant $C_1$ above equals zero. Since we also see that
    \begin{equation}\label{eq:c_1}
    n \, \frac{\int_X\left(\omega_X\right)_\mathcal{V}^m \wedge \chi_\mathcal{H} \wedge \omega_B^{n-1} }{\int_X \left(\omega_X\right)_\mathcal{V}^m\wedge\omega_B^n}
    =n \, \frac{\int_B \pi_B\left(\chi_\mathcal{H}\right) \wedge \omega_B^{n-1}}{\int_B \omega_B^n},
    \end{equation}
    the coefficient $c_1$ of the $k^{-1}$-order term of \eqref{eq:cexpan} is the constant of the $J_{\pi_B(\chi_{\mathcal{H}})}$-equation.
    Fix a point $p$ in $X$. Let $\alpha$ be a non-zero semipositive form at $p$. We first calculate with $\omega_k$. At $p$, we have
    \begin{align*}
        &\left(c\omega_{k}^{m+n-1}-(m+n-1)\omega_{k}^{m+n-2}\wedge\chi\right)\wedge\alpha\\
        =&k^n \left(c_0 {{m+n-1}\choose n} \omega_X^{m-1}\wedge\omega_B^n-{m+n-1\choose n}(m-1)\omega_X^{m-2}\wedge\chi\wedge\omega_B^n \right)\wedge\alpha\notag\\
        &+k^{n-1}\left( c_0{m+n-1\choose n-1}\omega_X^m\wedge\omega_B^{n-1}-{m+n-1\choose n-1}m\omega_X^{m-1}\wedge\chi\wedge\omega_B^{n-1}\right)\wedge\alpha\notag\\
        &+k^{n-1}\left((c_1{m+n-1\choose n}\omega_X^{m-1}\wedge\omega_B^n\right)\wedge\alpha\notag\\
        &+k^{n-2}\left(c_0 {{m+n-1}\choose n-2} \omega_X^{m+1}\wedge\omega_B^{n-2}-{m+n-1\choose n-2}(m+1)\omega_X^{m}\wedge\chi\wedge\omega_B^{n-2}\right)\wedge\alpha\notag\\
        &+k^{n-2}\left(c_1 {{m+n-1}\choose n-1} \omega_X^{m}\wedge\omega_B^{n-1}\right)\wedge\alpha\notag\\
        &+k^{n-2}\left(c_2 {{m+n-1}\choose n} \omega_X^{m-1}\wedge\omega_B^{n}\right)\wedge\alpha+O(k^{n-3})\notag.
    \end{align*}
    We first consider the $k^n$-order term. Note that since $\omega_{\pi(p)}$ is a solution of the $J_{\chi_{\pi(b)}}$-equation on each fiber, it is a $\mathcal{C}$-subsolution of the $J_{\chi_{\pi(p)}}$-equation on each fiber. By the assumption that $\alpha_{\mathcal{V}}$ is a semipositive form at $p$, we see that the $k^n$-order term is positive if $\alpha_{\mathcal{V}}$ is non-zero.
    Let us perturb $\omega_{k}$ to $\omega_{k,1}:=\omega_k+\sqrt{-1}\partial\bar{\partial}k^{-1}\phi_{1,\mathcal{V}}$. Then, since this perturbation only contains $O(k^{-1})$-term, the leading term is unchanged and hence still positive if $\alpha_{\mathcal{V}}$ is non-zero.
    
    Now let us assume $\alpha_{\mathcal{V}}$ is zero. Note that since $\alpha$ is a semipositive form, if $\alpha_{\mathcal{V}}$ is zero, then the mixed term $\alpha_m$ is also zero. Indeed, if some coefficient  $\alpha(e_a,e_j)$ of the mixed term is not zero, then the determinant of the form restricted to the plane spanned by $e_a$ and $e_j$ will be negative, which contradicts $\alpha$ being semipositive. 
    In the above calculation, the first term of the $k^{n-1}$-order terms vanishes since $\alpha_\mathcal{V}$ is zero and $\omega_{\pi(p)}$ is a solution of the $J_{\chi_{\pi(p)}}$-equation. The second term of the $k^{n-1}$-order terms and the third term of the $k^{n-2}$-order terms also vanish since $\alpha_{\mathcal{V}}$ is zero. 
    When replacing $\omega_{k}$ with $\omega_{k,1}:=\omega_k+\sqrt{-1}\partial\bar{\partial}k^{-1}\phi_{1,\mathcal{V}}$, there is another $k^{n-1}$-order term coming from the combination of the $k^n$-order term in the above calculation and $k^{-1}\sqrt{-1}\bar{\partial}\partial\phi_{1,\mathcal{V}}$. However, this term still vanishes since $\alpha_\mathcal{V}=0$.
    By using the calculation in the proof of Lemma \ref{lem:Kahler} and the assumption that $\alpha=\alpha_{\mathcal{H}}$, we see that the $k^{n-2}$-order term becomes
    \begin{equation}\label{eq:push}
    \begin{split}
        &-k^{n-2}{m+n-1\choose n-1}(n-1)\omega_X^{m}\wedge\chi_{\mathcal{H}}\wedge\omega_B^{n-2}\wedge\alpha\\
        &+k^{n-2}{{m+n-1}\choose n-1}c_1 \omega_X^{m}\wedge\omega_B^{n-1}\wedge\alpha\\
        &+k^{n-2}{m+n-1\choose n-1}m\left( c_0\omega_X^{m-1}-(m-1)\omega_X^{m-2}\wedge\chi\right)\wedge\sqrt{-1}\partial\bar{\partial}\phi_{1,\mathcal{V}}\wedge\omega_B^{n-1}\wedge\alpha.
    \end{split}
    \end{equation}
    The function $\phi_{1,\mathcal{V}}$ satisfies $F_{(\omega_{X})_\mathcal{V},\chi_{\mathcal{V}}}(\phi_{1,\mathcal{V}})=-(f_1)_{\mathcal{V}}$, which implies the last term of \eqref{eq:push} equals 
    $$k^{n-2}{m+n-1\choose n-1}(f_1)_{\mathcal{V}}\, \omega_X^m\wedge\omega_B^{n-1}\wedge\alpha.$$ 
    Take local coordinates of $\mathcal{H}$ around $p$  
    in which $\omega_B=\sum\sqrt{-1}dz^i\wedge d\bar{z}^i$ and $\chi_{\mathcal{H}}=\sum\lambda_i\sqrt{-1}dz^i\wedge d\bar{z}^i$ at $p$. 
    Then, we see that
    \begin{align*}
        -(n-1)\chi_{\mathcal{H}}\wedge\omega_B^{n-2}+\big(c_1+(f_1)_{\mathcal{V}}\big)\omega_B^{n-1}
        =&-(n-1)\chi_{\mathcal{H}}\wedge\omega_B^{n-2}+(\Lambda_{\omega_B}\chi)\omega_B^{n-1}\\
        =&\sum_i\left(\lambda_i\prod_{j\neq i}\sqrt{-1}dz^j\wedge d\bar{z}^j\right)>0,
    \end{align*}
    where the first equality follows from the definition of $(f_1)_\mathcal{V}$ (see the arguments below Lemma \ref{lem:O(1)-term}), and the last positivity follows from that $\chi_{\mathcal{H}}$ is a positive form by the calculation in the proof of Lemma \ref{lem:Kahler}. Therefore, in either case where $\alpha_{\mathcal{V}}$ are zero or not, we have proved that $(c\omega_{k,1}^{m+n-1}-(m+n-1)\omega_{k, 1}^{m+n-2}\wedge\chi)\wedge\alpha$ is positive for $k\gg0$. Since the arguments above do not depend on $p$ or $\alpha$, we have proved that the form $c\omega_{k,1}^{m+n-1}-(m+n-1)\omega_{k, 1}^{m+n-2}\wedge\chi$ is positive for $k\gg0$.
\end{proof}

\subsection{Second proof of Theorem \ref{thm:main}}\label{sec:proof}
We perturb the approximate solution $\omega_{k, r}$ obtained in the previous subsection to a genuine solution of the $J_\chi$-equation for $k \gg 0$. A positive integer $r \gg 0$ is fixed and we consider all $k \gg 0$. An important ingredient is the following theorem called the quantitative inverse function theorem \cite[Theorem 4.1]{Fine}.

\begin{theorem}\label{thm:IFT} We assume the following conditions. \leavevmode
\begin{itemize}
 \item A map $F : B_1 \rightarrow B_2$ is a differentiable map of Banach spaces, whose derivative at 0, $DF$, is an isomorphism of Banach spaces, with inverse $P$.
 \item  A constant $\delta' $ is the radius of the closed ball in $B_1$, centered at 0, on which $F-DF$ is Lipschitz, with constant $1/ \left(2 || P || \right)$. 
 \item A constant $\delta$ is defined by $\delta = \delta' /(2 ||P||)$.
\end{itemize}
Then, whenever $y \in B_2$ satisfies $||y - F(0) || < \delta$, there exists $x \in B_1$ such that $F(x) = y$. Moreover, such an $x$ is unique subject to the constraint $||x|| < \delta'$.
\end{theorem}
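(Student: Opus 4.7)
The plan is to recast $F(x)=y$ as a fixed point equation and apply the Banach contraction mapping principle on the closed ball of radius $\delta'$ in $B_1$. Set $G = F - DF$ as a (nonlinear) map $B_1 \to B_2$; note $G(0)=F(0)$ since $DF(0)=0$. The equation $F(x)=y$ is equivalent to $DF(x) = y - G(x)$ and hence, applying $P$, to $x = P(y) - P(G(x))$. Define
\[
T : B_1 \to B_1, \qquad T(x) = P(y) - P(G(x)),
\]
so that solving $F(x)=y$ is the same as finding a fixed point of $T$.

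First I would estimate $T$ at the origin: $T(0) = P(y) - P(G(0)) = P(y - F(0))$, so the hypothesis $\|y - F(0)\| < \delta$ together with the definition $\delta = \delta'/(2\|P\|)$ gives
\[
\|T(0)\| \le \|P\|\cdot\|y - F(0)\| < \|P\|\cdot\delta = \tfrac{\delta'}{2}.
\]
Next I would verify the contraction property on $\overline{B(0,\delta')}$: for $x_1,x_2$ in this closed ball, the Lipschitz hypothesis on $F - DF = G$ yields
\[
\|T(x_1) - T(x_2)\| \le \|P\|\,\|G(x_1) - G(x_2)\| \le \|P\|\cdot \tfrac{1}{2\|P\|}\,\|x_1 - x_2\| = \tfrac{1}{2}\|x_1 - x_2\|.
\]
Combining the two estimates, for any $x$ with $\|x\| \le \delta'$ I get $\|T(x)\| \le \|T(0)\| + \tfrac{1}{2}\|x\| < \tfrac{\delta'}{2} + \tfrac{\delta'}{2} = \delta'$, so $T$ sends the closed ball of radius $\delta'$ strictly into itself.

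Since $\overline{B(0,\delta')}$ is a complete metric space and $T$ is a $\tfrac12$-contraction on it, Banach's fixed point theorem delivers a unique fixed point $x$ in this ball, which solves $F(x)=y$; the strict inequality $\|T(x)\|<\delta'$ actually places $x$ in the open ball. Uniqueness subject to $\|x\| < \delta'$ is automatic, since any such solution is a fixed point of $T$ in $\overline{B(0,\delta')}$ and contractions admit at most one such fixed point. The argument is entirely standard, so there is no genuine obstacle; the only point requiring care is the bookkeeping which shows that the definitions of $\delta'$ and $\delta$ are precisely what is needed to produce the contraction constant $\tfrac12$ in both the self-map and the Lipschitz estimates.
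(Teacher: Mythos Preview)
Your argument is correct and is the standard contraction-mapping proof of the quantitative inverse function theorem. The paper itself does not prove this statement; it is quoted without proof as \cite[Theorem 4.1]{Fine} and used as a black box, so there is no in-paper proof to compare against. Your write-up supplies exactly the details one would expect for such a result.
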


In this subsection, if it is not stated explicitly, all Sobolev spaces are considered with respect to $g_{k,r}$. Let $l$ be a positive integer satisfying $l-(m+n) > 0$. Then, for a function $\phi \in L^2_{l+2}$,  a $(1,1)$-form $\sqrt{-1} \partial \bar{\partial} \phi$ is continuous by the Sobolev embedding. Since the trace operator is analytic in the metric, we can extend the trace operator to the smooth operator 
$\Lambda_{\omega_{k, r}}(\chi) : L^2_{l+2} \rightarrow L^2_l$ defined by 
$\Lambda_{\omega_{k, r}}(\chi)(\phi) = \Lambda_{\omega_{k, r} + \sqrt{-1} \partial \bar{\partial} \phi}(\chi)$.  
Denote the set of functions in $L^2_l$ with integral zero with respect to $\omega_{k,r}$ by $L^2_{l,0}$,  the projection from $L^2_l$ to $L^2_{l,0}$ by $p$, and define $\mathcal{L}_{k,r} : = p \circ \Lambda_{\omega_{k, r}}(\chi)|_{L^2_{l+2,0}}$.
We will use Theorem \ref{thm:IFT} for the operator $\mathcal{L}_{k,r}$. 
Therefore, we need to check that the linearization $D_0 \mathcal{L}_{k,r}$ at 0 is isomorphism and that the norm of the inverse and the radius of the ball as in Theorem \ref{thm:IFT} can be estimated sufficiently to conclude 0 and $\mathcal{L}_{k,r}(0)$ are close enough.
First, we show that $D_0 \mathcal{L}_{k,r}$ is an isomorphism and estimate the norm of the inverse. The following lemma \cite[Lemma 6.10]{Fine} is key:

\begin{lemma}\label{lem:inverse}
Let $D:B_1\rightarrow B_2$ be a bounded invertible linear map of Banach spaces with bounded inverse $Q$. If $L:B_1\rightarrow B_2$ is another linear map with 
$$||L-D|| \le \left( 2||Q|| \right)^{-1},$$
then $L$ is also invertible with bounded inverse $P$ satisfying $||P|| \le 2||Q||$.
\end{lemma}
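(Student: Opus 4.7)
The plan is to prove this by a standard Neumann series argument. Since $D$ is invertible with bounded inverse $Q$, I would first factor
$$ L = D + (L-D) = D \bigl( I_{B_1} + Q(L-D) \bigr), $$
using $QD = I_{B_1}$. This reduces the question of invertibility of $L$ to the question of invertibility of the endomorphism $I_{B_1} + Q(L-D)$ of $B_1$, and once that is established the inverse is simply
$$ P = \bigl( I_{B_1} + Q(L-D) \bigr)^{-1} Q. $$

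The key step is then to control $Q(L-D)$ in operator norm. The hypothesis $\|L-D\| \le (2\|Q\|)^{-1}$ combined with submultiplicativity gives
$$ \|Q(L-D)\| \le \|Q\|\,\|L-D\| \le \tfrac{1}{2}, $$
which is strictly less than $1$. Because $B_1$ is a Banach space, the bounded operators on $B_1$ form a Banach algebra, so the Neumann series $\sum_{j=0}^{\infty} \bigl( -Q(L-D) \bigr)^{j}$ converges absolutely in operator norm and defines a two-sided inverse of $I_{B_1} + Q(L-D)$ whose norm is bounded by the geometric sum $\sum_{j=0}^{\infty} 2^{-j} = 2$.

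Combining the two steps yields that $L$ has the bounded inverse $P$ displayed above, and submultiplicativity once more gives $\|P\| \le 2\|Q\|$, which is exactly the claim. There is no serious analytic obstacle here: the lemma is a packaged form of the standard fact that invertibility is stable under small perturbations in a Banach algebra, and the specific constants in the hypothesis and conclusion are chosen so that the Neumann series is summable with explicit bound $2$. The only point to watch is the side on which $Q$ is placed in the factorization, so that both $LP = I_{B_2}$ and $PL = I_{B_1}$ are verified from $DQ = I_{B_2}$ and $QD = I_{B_1}$ respectively.
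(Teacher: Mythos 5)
Your proof is correct and is the standard Neumann-series argument for stability of invertibility under small perturbations; the paper itself gives no proof of this lemma, citing it directly from Fine, where it is established by the same method. Your factorization $L = D\bigl(I_{B_1} + Q(L-D)\bigr)$, the bound $\|Q(L-D)\|\le \tfrac12$, and the resulting estimate $\|P\|\le 2\|Q\|$ are exactly as expected, and you correctly verify that $P=\bigl(I_{B_1}+Q(L-D)\bigr)^{-1}Q$ is a two-sided inverse using both $DQ=I_{B_2}$ and $QD=I_{B_1}$.
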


The operator close to $D_0 \mathcal{L}_{k,r}$ will be $F_{\omega_{k,r}, \chi}$, which is an isomorphism between $L^2_{l+2,0}$ and $L^2_{l,0}$ by Lemma \ref{lem:properties}

\begin{lemma}\label{lem:inverse_estimate}
There exist a constant $C$, which depends on $r$, and an integer A, which is independent of $r$, 
such that for all $\phi \in L^2_{l,0}$ and $k \gg 0$, we have
$$ ||F_{\omega_{k,r}, \chi}^{-1} (\phi)||_{L^2_{l+2}}
\le Ck^A ||\phi||_{L^2_l}.$$
\end{lemma}

\begin{proof}
By Lemma \ref{lem:properties}, for $\phi \in L^2_{l, 0}$, we have 
$$\int_X \phi \, F_{\omega_{k,r}, \chi}(\phi) \, \omega_{k,r}^{m+n}
= \int_X \left( \nabla^{\bar{q}}_{k,r} \phi \right) \left( \nabla^p_{k,r} \phi \right) \chi_{p \bar{q}} \, \omega_{k,r}^{m+n},$$
where $\nabla_{k,r}$ denotes the covariant derivative with respect to $\omega_{k,r}$.
Since the leading term of $\omega_{k,r}$ is $k \omega_B$ (recall that we fix $r$) and $\chi$ is a K{\"a}hler form, there exists a constant $C_1$ such that for all $k>0$,
$$ \chi \ge C_1 k^{-1} \omega_{k,r}.$$ 
Therefore, if we denote the first non-zero eigenvalue of $F_{\omega_{k,r}, \chi}$  by $\lambda_1$ and an eigenfunction corresponding to $\lambda_1$  by $\phi_1$, we have
\begin{align*}
\lambda_1 &= \left( \int_X {\phi_1} \, F_{\omega_{k,r}, \chi} \left( \phi_1 \right) \, \omega_{k,r}^{m+n} \right) \Big/ {||\phi_1||^2_{L^2}} \\
&=\left( \int_X \left( \nabla^{\bar{q}}_{k,r} \phi_1 \right) \left( \nabla^p_{k,r} \phi_1 \right) \chi_{p \bar{q}} \, \omega_{k,r}^{m+n} \right) \Big/ {||\phi_1||^2_{L^2}} \\
& \ge C_1 k^{-1} \left( \int_X |\nabla_{k,r} \phi_1|^2 \, \omega_{k,r}^{m+n} \right) \Big/ {||\phi_1||^2_{L^2}} \\
& \ge C_2 k^{-2}.
\end{align*}
In the last inequality, we used \cite[Lemma 6.5]{Fine}. Also, for $\phi \in L^2_{l+2,0}$, by the following lemma, we have
$$ ||\phi||_{L^2_{l+2}} \le C k^A\left( ||\phi||_{L^2} + ||F_{\omega_{k,r}, \chi}(\phi)||_{L^2_l} \right).$$
Combining these two estimates gives the desired estimate.
\end{proof}

\begin{lemma}\label{elliptic}
There exist a constant C and an integer $A$, independent of $k$ and $r$, 
such that for $\phi \in L^2_{l+2}$, an integer $r \ge A$ and any $k \gg 0$
we have
$$ ||\phi||_{L^2_{l+2}} \le C k^A\left( ||\phi||_{L^2} + ||F_{\omega_{k,r}, \chi}(\phi)||_{L^2_l} \right).$$
\end{lemma}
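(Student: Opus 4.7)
The plan is to prove this uniform elliptic estimate by combining a standard elliptic regularity bound with a careful bookkeeping of $k$-powers, using the adapted frame given by the vertical-horizontal splitting. The underlying principle is that, after a horizontal rescaling $z \mapsto k^{-1/2}z$, the metric $g_{k,r}$ becomes comparable to a $k$-independent reference metric, so one may apply classical $L^2$ Schauder theory in the rescaled picture and then translate back.

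First I would cover $X$ by finitely many open sets $\{U_\alpha\}$ admitting local trivializations of $\pi$, choosing coordinates $(y^i, z^a)$ with $y$ vertical and $z$ horizontal at the base-point. On each $U_\alpha$, the approximate metric has the schematic form
$$g_{k,r} = (\omega_X)_{\mathcal V} + k\,\omega_B + O(1)_{\mathcal H} + \sqrt{-1}\partial\bar\partial\Bigl(\sum_{i=0}^r \phi_{i,B} k^{2-i} + \sum_{i=0}^r \phi_{i,\mathcal V} k^{-i}\Bigr),$$
so introducing rescaled horizontal coordinates $\tilde z = k^{1/2} z$ produces a metric $\widetilde g_{k,r}$ whose components, together with all derivatives up to any fixed order $s$, are bounded uniformly in $k$ provided $r$ is larger than some $A=A(s)$ controlling how many subleading corrections must be absorbed.

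Next, in these rescaled coordinates the operator becomes $\widetilde F_{k,r} = -\widetilde g_{k,r}^{m\bar q}\widetilde g_{k,r}^{p\bar n}\widetilde\chi_{p\bar q}\,\partial_m\partial_{\bar n} + \text{lower order}$, where $\widetilde\chi$ is obtained from $\chi$ by the same rescaling. Since $\chi$ is K\"ahler on $X$ but its horizontal components carry no $k$, $\widetilde\chi$ remains bounded and the leading symbol of $\widetilde F_{k,r}$ is uniformly elliptic with uniformly bounded coefficients; the $r\ge A$ hypothesis guarantees that the corrections coming from the $\phi_{i,B}, \phi_{i,\mathcal V}$ are uniformly $O(k^{-1})$ with bounded derivatives. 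Hence the classical local $L^2$ Schauder estimate applied on each chart yields
\begin{equation*}
\|\phi\|_{\widetilde L^2_{l+2}(U_\alpha')} \le \widetilde C\bigl(\|\phi\|_{\widetilde L^2(U_\alpha)} + \|\widetilde F_{k,r}\phi\|_{\widetilde L^2_l(U_\alpha)}\bigr)
\end{equation*}
with $\widetilde C$ independent of $k$, where $U_\alpha'\Subset U_\alpha$ and $\widetilde L^2_s$ denotes the Sobolev norm in rescaled coordinates.

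Finally I would translate $\widetilde L^2_s$-norms back to $L^2_s$-norms with respect to $g_{k,r}$. Each horizontal covariant derivative in the original metric picks up a factor of $k^{-1/2}$ compared to the rescaled one, and the volume form picks up a factor of $k^{-n}$ with $n=\dim B$; all these conversions are polynomial in $k$. Summing over the finite cover via a partition of unity (and absorbing transition errors between local splittings into the lower-order term, which is controlled uniformly for $r \ge A$) yields the desired estimate with some polynomial power $k^A$ independent of $k$ and $r$.

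The main obstacle will be the careful bookkeeping required in the rescaling: one must verify that the coefficients of $\widetilde F_{k,r}$, including those arising from differentiating the $\phi_{i,B}$ and $\phi_{i,\mathcal V}$ terms in $g_{k,r}$, remain bounded in all derivatives up to order $l$ after rescaling, which is where the assumption $r\ge A$ enters in a quantitative way. A secondary technical point is that the vertical-horizontal splitting is not constant within a trivialization, so transition terms between charts must be shown to contribute only lower-order pieces, absorbable into the $\|\phi\|_{L^2}$ term by interpolation.
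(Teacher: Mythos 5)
Your proposal has a genuine gap at its central step: the claim that after the horizontal rescaling $\tilde z = k^{1/2}z$ the operator becomes \emph{uniformly} elliptic with a $k$-independent Schauder constant $\widetilde C$. This is false, and it is precisely the difficulty the paper's Remark before the proof flags as the reason one cannot simply import the model-space arguments of Fine.

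The rescaling normalizes $g_{k,r}$ to be $O(1)$, but $\chi$ is a \emph{fixed} K\"ahler form on $X$, independent of $k$, and you do not rescale it together with the metric. A horizontal $(1,1)$-component $\chi_{a\bar b}\, dz^a \wedge d\bar z^b$ becomes $k^{-1}\chi_{a\bar b}\, d\tilde z^a \wedge d\bar{\tilde z}^b$ in the rescaled coordinates. So while $\widetilde\chi$ is indeed bounded \emph{above}, its horizontal block degenerates at rate $1/k$, and the principal symbol of $G_{k,r}$, which is $g_{k,r}^{-1}\chi\, g_{k,r}^{-1}$, has $2n$ eigenvalues decaying like $1/k$. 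The paper computes exactly this convergence,
$$g_{k,r}^{-1}\chi g_{k,r}^{-1} \longrightarrow \begin{bmatrix} (g_X)_{11}^{-1}\chi_{11}(g_X)_{11}^{-1} & 0 \\ 0 & 0 \end{bmatrix},$$
and then shows that the eigenvalues going to zero do so at rate exactly $1/k$ (not faster), by examining $k\lambda$. Consequently the local elliptic constant is $Ck^A$, not a $k$-independent $\widetilde C$; this $k$-dependence must be carried through Taylor's proof, which is the technical content of the paper's argument. With your claimed uniform estimate, you would obtain a much stronger conclusion than Lemma \ref{elliptic} asserts, which should itself be a warning sign. A secondary inaccuracy: you attribute the role of $r\ge A$ to controlling the corrections from $\phi_{i,B},\phi_{i,\mathcal V}$ inside $g_{k,r}$, but in the paper those corrections are $O(1)$ in $C^t$ for any fixed $r$; the hypothesis $r\ge A$ is used only at the end, to pass from $G_{k,r}$ to $F_{\omega_{k,r},\chi}$, using that the first-order difference term $(\partial\Lambda_{\omega_{k,r}}\chi,\bar\partial\phi)_{\omega_{k,r}}$ is $O(k^{-r-1})$ because $\omega_{k,r}$ is an approximate solution.
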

\begin{remark}
Note that our operator $F_{\omega_{k,r},\chi}$ depends on a reference metric $\chi$ which is independent of $k$. Thus, when we try to apply the arguments in \cite[Sections 5 and 6]{Fine}, the constant of the elliptic estimate on a model space depends on $k$ in our case. Note also that a model space approximates the original space in the order $k^{-1/2}$ at best. Hence, the constant of the elliptic estimate in a model space increases faster than the approximation by a model space.
Due to this, we cannot deduce the estimate in a local fibered space $X|_D$ from the estimate on a model space $S \times D$, where $S$ is a fiber and $D$ is a small neighborhood on $B$ as in \cite[Lemma 5.9]{Fine}. 
Instead of the model space $S\times D$, 
we use product coordinate neighborhoods. We first obtain the elliptic estimate in the above form on product coordinates by the standard proof of the elliptic estimate on the Euclidean space, and then patch them by using a partition of unity on the total space $X$.
\end{remark}
\begin{proof}
Choose a finite product coordinate system $\{(U_i,\psi_i; z^1, \dots , z^{m+n})\}^N_{i=1}$ on the total space $X$ such that $z^j$ are base direction for $j=m+1, \dots, m+n$ and the coordinate can extend to $U'_i$ satisfying $U_i \subset \subset U'_i$.
Take also a partition of unity $\{ \rho_i \}^N_{i=1}$ subordinate to 
$\{(U_i,\psi_i)\}^N_{i=1}$. Define a new coordinate system $\{ (U_i, \tau_i; w^1,\dots, w^{m+n}) \}^N_{i=1}$, which is given by $w^j=z^j$ for $j=1, \dots, m$ and $w^j= \sqrt{k} z^j$ for $j=m+1, \dots, m+n$. By this scaling and the construction of the approximate solutions $\omega_{k,r}$, the coefficients of the corresponding metric $g_{k,r}$ are $O(1)$ in $C^t$ on this coordinate $(w^1, \dots, w^{m+n})$ for $k \gg 0$ and each fixed $r$, where $t$ is chosen large enough and fixed for the arguments below.  Denote the linearization of the trace operator by $G_{k,r}$, i.e. 
$$G_{k,r}(\phi) = - \left( \chi, \sqrt{-1} \partial\bar{\partial} \phi \right)_{\omega_{k,r}}.$$
We will prove the elliptic estimate in the statement for the operator $G_{k,r}$ first. The following claim is the local version of the estimate in the statement:
\begin{claim*}
On $U_i$, denote the corresponding operator of $G_{k,r}$ via $\tau_i$ on $\tau_i(U_i)$ by $G_{k,r}$ as well. For a real number $s \in \mathbb{R}$ and a function $\phi \in L^2_{s+2}$ with compact support contained in $U_i$, there exist a constant $C$ and $A$, depending on $s$ but not on $k$ or $r$, such that 
$$ ||\phi||_{L^2_{s+2}(\mathbb{R}^{2(m+n)})} \le C k^A \left( ||\phi||_{L^2_{s+1}(\mathbb{R}^{2(m+n)})} + ||G_{k,r}(\phi)||_{L^2_s(\mathbb{R}^{2(m+n)})} \right),$$
where we suppress $\tau_i$ and consider functions on $U_i$ as on $\tau_i(U_i)$.
\end{claim*}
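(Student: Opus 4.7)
The plan is to adapt the standard interior elliptic estimate for variable-coefficient operators (as in \cite[Chapter 5, Theorem 11.1]{Taylor}), while carefully tracking how the ellipticity constant of $G_{k,r}$ degenerates in $k$. The starting observation is that, in the rescaled coordinates $w^j$, the components of $g_{k,r}$ are $O(1)$ in $C^t$ uniformly in $k$ (for fixed large $r$) by the construction of $\omega_{k,r}$, but the form $\chi$ degenerates after rescaling: its fiber-fiber components are $O(1)$, its mixed components are $O(k^{-1/2})$, and its base-base components are $O(k^{-1})$. Consequently the bound $\chi \geq C_1 k^{-1}\omega_{k,r}$ used earlier translates in $w$-coordinates to a principal symbol estimate $Ck^{-1}|\xi|^2 \leq \sigma_{G_{k,r}}(x,\xi) \leq C'|\xi|^2$, so the ellipticity constant of $G_{k,r}$ in these coordinates is $O(k^{-1})$.

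With this in hand, I would first establish the claim for the constant-coefficient operator $G_0$ obtained by freezing the coefficients of $G_{k,r}$ at a point $x_0 \in \tau_i(U_i)$. Fourier analysis together with the lower bound on $\sigma_{G_0}$ yields, for $\psi$ of compact support,
\begin{equation*}
\|\psi\|_{L^2_{s+2}(\mathbb{R}^{2(m+n)})} \leq Ck\bigl(\|\psi\|_{L^2_{s+1}(\mathbb{R}^{2(m+n)})} + \|G_0\psi\|_{L^2_s(\mathbb{R}^{2(m+n)})}\bigr),
\end{equation*}
the factor $k$ arising from inverting the degenerate symbol. To pass from $G_0$ to the variable-coefficient operator $G_{k,r}$, I would shrink $U_i$ so that the symbol difference $\sigma_{G_{k,r}}(x,\xi) - \sigma_{G_0}(\xi)$ is bounded by $\varepsilon |\xi|^2$ uniformly in $k$; here uniformity uses that $\|g_{k,r}\|_{C^1}$ and $\|\chi\|_{C^1}$ in $w$-coordinates are $O(1)$. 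The error can then be absorbed into the left-hand side of the frozen estimate at the cost of increasing the constant by an additional power of $k$ that will be absorbed into $k^A$.

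To handle general $s$ I would apply $\Lambda^s = \langle D \rangle^s$ to $G_{k,r}\phi$ and use the identity
\begin{equation*}
G_{k,r}(\Lambda^s \phi) = \Lambda^s(G_{k,r}\phi) - [\Lambda^s, G_{k,r}]\phi.
\end{equation*}
The commutator is a pseudodifferential operator of order $s+1$ whose operator norm on the relevant Sobolev spaces is controlled by the $C^t$-norms of the coefficients of $G_{k,r}$, which are $O(1)$ uniformly in $k$. Iterating the frozen-coefficient $L^2$ estimate together with the commutator bound, and finally patching the resulting local estimates by a partition of unity subordinate to a cover by sufficiently small balls, yields the claim with some constant $Ck^A$ where $A$ depends on $s$ only through the number of iterations.

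The main obstacle I anticipate is keeping careful track of the total power of $k$ accumulated through this iteration, since each application of the degenerate frozen-coefficient inversion contributes a factor of $k$, so $A$ grows with $s$, which is permitted by the statement. A subtler point is verifying that the lower-order terms of $G_{k,r}$ (arising from derivatives of $g_{k,r}^{-1}$ and $\chi$ in the coordinate expression of the operator) do not introduce hidden $k$-dependence; this is precisely what the definition of the $w$-coordinates is designed to prevent, since in these coordinates every coefficient of $G_{k,r}$ is uniformly bounded in $C^t$ independently of $k$.
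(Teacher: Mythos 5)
Your overall route matches the paper's: both arguments first establish that the principal symbol of $G_{k,r}$ in the $w$-coordinates is bounded below by $Ck^{-1}|\xi|^2$, and then run the standard interior elliptic estimate of Taylor while tracking the power of $k$ that accumulates. Your derivation of the lower bound via $\chi\geq C_1 k^{-1}\omega_{k,r}$ combined with the $O(1)$ bounds on $g_{k,r}^{\pm 1}$ is in fact cleaner than the paper's, which instead computes the eigenvalues of $g_{k,r}^{-1}\chi g_{k,r}^{-1}$ explicitly and shows that the $2n$ small eigenvalues satisfy $k\lambda\to(\text{positive})$; both arguments yield the same $Ck^{-1}$ bound.

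There is, however, a genuine gap in the step ``shrink $U_i$ so that the symbol difference is bounded by $\varepsilon|\xi|^2$ uniformly in $k$.'' The sets $U_i$ are fixed once and for all on $X$, but in the rescaled coordinate $\tau_i$ the image $\tau_i(U_i)$ has diameter of order $\sqrt{k}$ in the base directions; no $k$-independent shrinking of $U_i$ will make the coefficient oscillation over $\tau_i(U_i)$ small. Worse, absorbing the variable-coefficient error against the frozen-coefficient estimate requires $\varepsilon\lesssim k^{-1}$ (the factor of $k$ from inverting the degenerate symbol means the absorption inequality is $Ck\varepsilon<\tfrac12$), so the frozen-coefficient approximation is only valid on balls of radius $\sim k^{-1}$ in $w$-coordinates. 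The fix is to cover $\tau_i(U_i)$ by $O(k^{3n/2+m})$ such balls with bounded overlap; the bounded overlap guarantees that the partition-of-unity patching does not introduce a factor proportional to the number of balls, and the residual $k$-dependence from the iteration then stays polynomial, i.e.\ of the form $Ck^A$. This is precisely the concern the paper flags with the remark that the supports of the functions involved grow with $k$, and it is the main point you would need to supply to complete the argument.
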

\begin{proof}[Proof of Claim]

We first observe that the principal symbol of $G_{k,r}$ can be estimated from below by $C/k$ for some constant $C$ on $\tau_i(U_i)$. On a coordinate neighborhood $\tau_i(U_i)$, we have
$$g_{k,r}^{-1}\chi g_{k,r}^{-1} \rightarrow  \begin{bmatrix} (g_X)_{11}^{-1} \chi_{11} (g_X)_{11}^{-1} & 0 \\ 0& 0\end{bmatrix} \text{as $k \rightarrow \infty$},$$
where we consider metrics $g_{k,r}$ and $\chi$ as real $2(m+n) \times 2(m+n)$ matrices on $\tau_i(U_i)$ and $(g_X)_{11}$ and $\chi_{11}$ are matrices of purely fiber part.
This convergence is uniformly on $\tau_i(U_i)$. Thus, eigenvalues of the principal symbol $g_{k,r}^{-1}\chi g_{k,r}^{-1}$ converge to $2m$ positive numbers and $2n$ zeros. For the eigenvalue $\lambda$ which converges to zero, consider $k \lambda$. This is the zero of the polynomial
$$P(t)= \det{\left(
\begin{bmatrix}
    I & 0 \\ 0 & \sqrt{k}I
\end{bmatrix}
g_{k,r}^{-1} \chi g_{k,r}^{-1}
\begin{bmatrix}
    I & 0 \\ 0 & \sqrt{k}I
\end{bmatrix}
- \begin{bmatrix}
    \lambda I & 0 \\ 0 & t I
\end{bmatrix}\right)}.$$
We have
$$\begin{bmatrix}
    I & 0 \\ 0 & \sqrt{k}I
\end{bmatrix}
g_{k,r}^{-1} \chi g_{k,r}^{-1}
\begin{bmatrix}
    I & 0 \\ 0 & \sqrt{k}I
\end{bmatrix} \rightarrow 
\begin{bmatrix}
    (g_X)^{-1}_{11} & 0 \\ D & g_B^{-1}
\end{bmatrix}
\chi'
\begin{bmatrix}
    (g_X)^{-1}_{11} & {}^t\! D \\ 0 & g_B^{-1}
\end{bmatrix} \text{as $k \rightarrow \infty$},$$
where $D$ is some matrix and $g_B$ and $\chi'$ denote the corresponding matrices of the metrics $\omega_B$ and $\chi$, respectively, in the coordinate neighborhood $\psi_i(U_i)$ which is independent of $k$. 
The convergence is uniform in $\tau_i(U_i)$, and thus $k \lambda$ converges to some positive number, since $g_X$, $g_B$ and $\chi'$ are positive in each part and $\lambda$ converges to zero. This confirms that we can estimate the eigenvalues of the principal symbol of $G_{k,r}$ from below by $C/k$. 
We then follow the arguments in \cite[Section 3.4]{Donaldson}, also used in \cite[Lemma 5.4]{Fine}. We divide the Euclidean space $\mathbb{R}^{2(m+n)}$ by rectangles $B_a:=\prod_{i=1}^{2(m+n)}[a_i,a_i+1)$, where $a:=(a_1,a_2,\dots,a_{2(m+n)})\in\mathbb{Z}^{2(m+n)}$. Also, denote by $B_a^+$ a rectangle slightly larger than $B_a$. Then, by the proof of the standard elliptic estimate (such as \cite[Chapter 5 Theorem 11.1]{Taylor}), we have 
$$ ||\phi||_{L^2_{s+2}(B_a)} \le C k^A \left( ||\phi||_{L^2_{s+1}(B_a^+)} + ||G_{k,r}(\phi)||_{L^2_s(B_a^+)} \right),$$
where $C$ and $A$ are constants, depending on $s$ but not on $k$ or $r$. Here, the constants $C$ and $A$ can also be independent of $a$, since the symbol and coefficients of $G_{k,r}$ can be uniformly estimated in $\tau_i(U_i)$ as above. Therefore, we see that
\begin{align*}
    ||\phi||_{L^2_{s+2}(\mathbb{R}^{2(m+n)})}
    =&\sum_{a\in \mathbb{Z}^{2(m+n)}}||\phi||_{L^2_{s+2}(B_a)}\\
    \le &\sum_{a\in \mathbb{Z}^{2(m+n)}}C k^A \left( ||\phi||_{L^2_{s+1}(B_a^+)} + ||G_{k,r}(\phi)||_{L^2_s(B_a^+)}\right)\\
    \le &C k^A \left( ||\phi||_{L^2_{s+1}(\mathbb{R}^{2(m+n)})} + ||G_{k,r}(\phi)||_{L^2_s(\mathbb{R}^{2(m+n)})} \right),
\end{align*} 
where the last inequality follows since each $B_a^+$ overlaps only finitely many times, independently of $a$.
\end{proof}
Let $\phi \in L^2_{l+2}$. Then, we have 
\begin{align*}
||\phi||_{L^2_{l+2}} &\le \sum^N_{i=1} ||\rho_i \phi||_{L^2_{l+2}}\\
                         &= \sum^N_{i=1} \left(\sum^{l+2}_{j=0} \int_{\tau_i(U_i)} |\nabla^j \rho_i \phi|^2_{g_{k,r}} \, \left(\det[g_{k,r}]\right) \, dw \right)^{1/2}\\
                         &\le C \sum^N_{i=1} \left(\sum^{l+2}_{j=0} \int_{\tau_i(U_i)} |D^j \rho_i \phi|^2 \, dw \right)^{1/2},
\end{align*}
where $\nabla^j$ is the $j$-th covariant derivative with respect to $g_{k,r}$ and $D^j$ is the $j$-th derivative with respect to the coordinate $(w^1, \dots, w^{m+n})$. The last inequality follows from the fact that the corresponding matrices $g_{k,r}$ and $g_{k,r}^{-1}$  with respect to the coordinate $(w^1, \dots, w^{m+n})$ are $O(1)$ in $C^t(\tau_i(U_i))$ for $k \gg 0$ and each fixed $r$. Using the claim above, we get 
$$||\phi||_{L^2_{l+2}} \le C k^{A} \sum^N_{i=1} \left(||\rho_i \phi||_{L^2_{l+1}(\tau_i(U_i))} + ||G_{k,r}(\rho_i \phi)||_{L^2_{l}(\tau_i(U_i))} \right).$$
Define the operator $[G_{k,r},\rho_i]:= G_{k,r} \circ \rho_i  -\rho_i \circ G_{k,r}$, where $\rho_i$ is the operator of multiplication by $\rho_i$. This operator is a differential operator of the first order. Moreover, the coefficients can be estimated from above by constants in $C^l(\tau_i(U_i))$
Therefore, by introducing a new cutoff function $\rho_{i,1}$, which is constant 1 on $\mathrm{Supp}(\rho_i)$ and has compact support $\mathrm{Supp}(\rho_{i,1})$ contained in $U_i$, we have
$$||\phi||_{L^2_{l+2}} \le C k^{A} \sum^N_{i=1} \left(||\rho_{i,1} \phi||_{L^2_{l+1}(\tau_i(U_i))} + ||\rho_i G_{k,r}(\phi)||_{L^2_{l}(\tau_i(U_i))} \right).$$
For the second term, note that the derivatives of $\rho_i$ can be estimated from above, and $L^2_l(\tau_i(U_i))$-norm is equivalent to $L^2_l$-norm as $g_{k,r}$ is $O(1)$ on this coordinate. Thus, we get
$$||\phi||_{L^2_{l+2}} \le C k^{A} \left(\sum^N_{i=1} ||\rho_{i,1} \phi||_{L^2_{l+1}(\tau_i(U_i))}\right) + C k^A ||G_{k,r}(\phi)||_{L^2_l}.$$
We next estimate $||\rho_{i,1} \phi||_{L^2_{l+1}(\tau_i(U_i))}$ by using the claim above and iterate this process. Eventually, we get
$$||\phi||_{L^2_{l+2}} \le C k^{A} \left(||\phi||_{L^2} + ||G_{k,r}(\phi)||_{L^2_l}\right),$$
where we use the same notation $A$ for simplicity.
To obtain the desired estimate for $F_{\omega_{k,r},\chi}$, recall that we have
$$\Bigl( G_{k,r} - F_{\omega_{k,r}, \chi} \Bigr)(\phi)
= \Bigl( \partial \Lambda_{\omega_{k,r}}(\chi), \bar{\partial}\phi \Bigr)_{\omega_{k,r}}.$$
Although we only proved that $\omega_{k,r}$ is an approximate solution of the $J_\chi$-equation up to $O(k^{-r-1})$ pointwisely in the last subsection, the argument of \cite[Lemma 5.7]{Fine} follows and we have
\begin{align*}
p(\Lambda_{\omega_{k,r}}\chi)&= O(k^{-r-1}) &\text{in $C^l(g_{k,r})$},\\
p(\Lambda_{\omega_{k,r}}\chi)&= O(k^{-r-1-n/2}) &\text{in $L^2_l(g_{k,r})$}.
\end{align*}
Therefore, we have
$$||G_{k,r}-F_{\omega_{k,r},\chi}|| \le C k^{-r-1}$$
and if we choose $r \ge A$, we get
$$||\phi||_{L^2_{l+2}} \le C k^{A} \left(||\phi||_{L^2} + ||F_{\omega_{k,r},\chi}(\phi)||_{L^2_l}\right)$$
for $k \gg 0$. 
\end{proof}

Recall that we have 
$$\Bigl( D_0 \Lambda_{\omega_{k,r}}(\chi) - F_{\omega_{k,r}, \chi} \Bigr)(\phi)
= \Bigl( \partial \Lambda_{\omega_{k,r}}(\chi), \bar{\partial}\phi \Bigr)_{\omega_{k,r}}.$$
As $\omega_{k,r}$ is a solution of the $J_\chi$-equation up to $O \left( k^{-r-1} \right)$ and the derivative $D_0 \mathcal{L}_{k,r}$ is given by $D_0 \mathcal{L}_{k,r} = p \, \circ \, \Bigl( D_0 \Lambda_{\omega_{k,r}}(\chi) \Bigr)$, the same arguments in the proof of \cite[Theorem 6.1] {Fine} imply that we have
$$ \left\Vert \left( D_0 \mathcal{L}_{k,r} - F_{\omega_{k,r}, \chi} \right) (\phi)\right\Vert_{L^2_l}
\le c k^{-r-1} ||\phi||_{L^2_{l+2}}.$$
Hence, if we choose $r \ge A$, Lemma \ref{lem:inverse} implies that $D_0 \mathcal{L}_{k,r}$ is an isomorphism for $k\gg0$ and the operator norm of the inverse $P$ satisfies $||P||_\mathrm{op} \le C k^A$
for some constant $C$.

We also estimate the radius of the ball on which $\mathcal{L}_{k,r} - D_0 \mathcal{L}_{k,r}$ is Lipschitz with constant $1/\left( 2||P|| \right)$.
 Denote the nonlinear part of $\mathcal{L}_{k,r}$ by $\mathcal{N}_{k,r}$, i.e., $\mathcal{N}_{k,r} = \mathcal{L}_{k,r} - D_0 \mathcal{L}_{k,r}$. Then, by the mean value theorem,
 $$ \left\Vert \mathcal{N}_{k,r} (\phi) - \mathcal{N}_{k,r} (\psi) \right\Vert_{L^2_l} 
\le 
\sup_{f \in [\phi, \psi]} \Vert D_f \mathcal{N}_{k,r} \Vert
\left\Vert \phi - \psi \right\Vert_{L^2_{l+2}},$$
where $D_f \mathcal{N}_{k,r}$ denotes the derivative of $\mathcal{N}_{k,r}$ at $f$.

\begin{lemma}\label{lem:lipschitz}
There exists a constant $C$ which is independent of $k$ such that if 
$ \Vert f \Vert_{L^2_{l+2}} \le \epsilon \, k^{-n/2}$ for a small constant $\epsilon$, 
we have 
$$\Vert D_f \mathcal{N}_{k,r} \Vert
\le 
C k^{n/2} ||f||_{L^2_{l+2}}.$$
\end{lemma}

\begin{proof}
 For $\phi \in L^2_{l+2,0}$, by a simple calculation, we have
\begin{align*}
\Vert \left( D_f \mathcal{N}_{k,r} \right) (\phi) \Vert_{L^2_l}
&= \Vert \left( D_f \mathcal{L}_{k,r} - D_0 \mathcal{L}_{k,r} \right) (\phi) \Vert_{L^2_l}\\
&\le \Vert -\left( \chi, \sqrt{-1} \partial \bar{\partial} \phi \right)_{\omega_{k,r}+\sqrt{-1} \partial \bar{\partial} f} + \left(\chi, \sqrt{-1} \partial \bar{\partial} \phi \right)_{\omega_{k,r}} \Vert_{L^2_l} \\
&= \Big\Vert \left( \partial_p \partial_{\bar{q}} \phi \right) \, \chi_{m \bar{n}} \, \Bigl( -\left(g_{k,r,f} \right)^{m \bar{q}} \left(g_{k,r,f} \right)^{p \bar{n}} + g_{k,r}^{m \bar{q}} \, g_{k,r}^{p \bar{n}} \Bigr) \Big\Vert_{L^2_l},
\end{align*}
where $g_{k,r,f}$ is the metric tensor corresponding to $\omega_{k,r} + \sqrt{-1} \partial \bar{\partial} f$.
Note that the Sobolev constants with respect to $g_{k,r}$ are independent of $k$ by \cite[Lemma 5.8]{Fine}. If $2l > 2(m+n)$, by the Sobolev inequality, for any tensors $T$ and $T'$ we have
$\Vert T \cdot T' \Vert_{L^2_l} \le C \Vert T \Vert_{L^2_l} \Vert T' \Vert_{L^2_l}$ for some constant $C$ which is independent of $k$, where $T \cdot T'$ denotes tensor product or contraction. Thus, we have
$$\Vert \left( D_f \mathcal{N}_{k,r} \right) (\phi) \Vert_{L^2_l}
\le
C \Vert \phi \Vert_{L^2_{l+2}} \, \Vert \chi \Vert_{C^l} \, \Vert - g^{-1}_{k,r,f} \otimes g^{-1}_{k,r,f} + g^{-1}_{k,r} \otimes g^{-1}_{k,r} \Vert_{L^2_l}.
$$
If we assume $ \Vert f \Vert_{L^2_{l+2}} \le \epsilon \, k^{-n/2}$ for a small constant $\epsilon$, 
since $\Vert g^{-1}_{k,r} \Vert_{L^2_l} < C k^{n/2}$ for some constant $C$ and 
$-\left(g_{k,r,f}\right)^{m \bar{q}} + g_{k,r}^{m \bar{q}} 
= \left(g_{k,r,f}\right)^{m \bar{p}} \left( \partial_n \partial_{\bar{p}} f \right) g_{k,r}^{n \bar{q}},
$
 we have
$$ \Vert g_{k,r,f}^{-1} \Vert_{L^2_{l+2}} 
\le \Vert g_{k,r,f}^{-1} - g_{k,r}^{-1} \Vert_{L^2_{l+2}} +C \, k^{n/2}
\le \Vert g_{k,r,f}^{-1} \Vert_{L^2_{l+2}} \epsilon \, C +C \, k^{n/2}.
$$
Also, the form $\chi$ is uniformly bounded above with respect to the $C^l$-norm (\cite[Lemma 5.6]{Fine}). 
By combining these estimates, we have the desired estimate.
\end{proof}

This implies that $\mathcal{L}_{k,r} - D_0\mathcal{L}_{k,r}$ is Lipschitz with constant $1/\left(2\Vert P \Vert\right)$ on the ball centered at $0$ with radius $C \, k^{-A-n/2}$ for some constant $C$. 
As $\omega_{k,r}$ is an approximate solution of the $J_\chi$-equation, the same arguments as in \cite[Lemmas 5.6 and 5.7]{Fine} imply that we have $\mathcal{L}_{k,r}(0) = O\left(k^{-r-1} \right)$ in $C^l(g_{k,r})$ and $\Vert \mathcal{L}_{k,r}(0) \Vert_{L^2_l} = O \left(k^{-r-1+{n/2}} \right)$.
Therefore, if we choose $r \ge  2A + n $, for all $k \gg 0$, we have a function $\phi$ such that $\mathcal{L}_{k,r}(\phi) = 0$. As $\mathcal{L}_{k,r}$ is an elliptic operator of second order, if we make $l$ large enough, the regularity theorem implies $\phi \in C^\infty$. This completes the proof of Theorem \ref{thm:main}. 

\subsection{Proof of Theorem \ref{thm:converse}}\label{sec:converse}

We also consider the converse implication of Theorem \ref{thm:main}. Instead of considering a solution of the $J$-equation, we consider the topological condition called \textit{$J$-positivity} and it relates by the following theorem (\cite{Song}):

\begin{theorem}[{\cite[Corollary 1.2]{Song}}]\label{J-positivity}
Fix a K{\"a}hler manifold $X$ of dimension $n$ with K{\"a}hler metrics $\omega$ and $\chi$. 
Let $c > 0$ be the constant determined by 
$$c \int_X \omega^n = n \int_X \chi \wedge \omega^{n-1}.$$
Then, there exists a solution of $J_\chi$-equation in the class $[\omega]$ if and only if 
we have
$$\int_W c\,\omega^p - p \chi \wedge \omega^{p-1} > 0$$
for all $p$-dimensional subvarieties W with $p = 1,2,\dots,{n-1}.$
\end{theorem}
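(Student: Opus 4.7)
The plan is to prove the two implications separately. For the forward direction, suppose $\omega_\phi = \omega + \sqrt{-1}\partial\bar\partial \phi$ solves the $J_\chi$-equation $\Lambda_{\omega_\phi}\chi = c$. Diagonalising $\chi$ with respect to $\omega_\phi$ at each point gives eigenvalues $\lambda_1,\dots,\lambda_n$ with $\lambda_1 + \cdots + \lambda_n = c$. A linear-algebra computation, using the symmetric-function identities that underlie \eqref{eq:sze}, shows that for some small $\epsilon > 0$ (controlled by uniform bounds on the $\lambda_i$, which hold since $X$ is compact) one has the pointwise inequality
$$\bigl(c-(n-p)\epsilon\bigr)\omega_\phi^p - p\,\chi \wedge \omega_\phi^{p-1} \ge 0$$
as real $(p,p)$-forms, for every $p = 1,\dots,n-1$. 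Integrating over any $p$-dimensional subvariety $W$ and using that cohomological integrals are independent of the representative (so $\omega_\phi$ may be replaced by $\omega$), we obtain the required uniform positivity.

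The reverse direction is handled by a continuity method. Introduce the one-parameter family of equations
$$\Lambda_{\omega_{\phi_s}}\chi = c + (1-s)\bigl(\Lambda_\omega\chi - c\bigr), \qquad s\in[0,1],$$
so that $s=0$ admits the trivial solution $\phi_0 \equiv 0$ and $s=1$ is the desired equation. Openness of the set of $s$ admitting a smooth solution follows from Lemma \ref{lem:properties}: the linearised operator $F_{\omega_{\phi_s}, \chi}$ is a self-adjoint elliptic isomorphism between functions of integral zero, so the implicit function theorem applies. For closedness one needs a priori estimates uniform in $s$. Higher regularity $C^{2,\alpha}$ and above is standard once a $C^2$ bound is available, by Evans--Krylov and Schauder theory; and a $C^2$ bound reduces to a $C^0$ bound via a maximum-principle argument on a quantity of the form $\log\mathrm{tr}_\omega\omega_{\phi_s} - A\phi_s$ for $A$ large enough to absorb the negative curvature terms.

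The main obstacle is the $C^0$ estimate, and this is exactly where the uniform $J$-positivity hypothesis must enter. The plan is to argue by contradiction: assume the oscillation $\sup_X \phi_s - \inf_X \phi_s$ is unbounded as $s$ approaches some critical value; normalise and extract a weak limit $\phi_\infty$, which is an $\omega$-plurisubharmonic function; then analyse the locus where its positive current $\sqrt{-1}\partial\bar\partial\phi_\infty$ concentrates mass. Following the pluripotential philosophy of Demailly and Ko\l odziej, adapted to Hessian equations by Song and Datar--Pingali, one expects this concentration to produce a subvariety $W$ of dimension $p<n$ on which
$$\int_W \bigl((c-(n-p)\epsilon)\omega^p - p\,\chi\wedge\omega^{p-1}\bigr) \le 0,$$
contradicting the hypothesis. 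Making this mass-concentration argument rigorous, and identifying the correct destabilising $W$ --- which in the uniform form given here is accomplished by G.\ Chen via a test-configuration-type analysis on the space of Kähler potentials --- is the technical heart of the proof and the step I expect to be the principal obstacle.
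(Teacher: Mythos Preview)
The paper does not prove this theorem at all: it is quoted verbatim as \cite[Theorem 1.1]{G.Chen} and used as a black box in Section~\ref{sec:converse} and in Remark~\ref{rem:collapsing}. There is therefore no ``paper's own proof'' to compare your proposal against.

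As for the content of your sketch: the forward direction is essentially correct and is the easy half --- a solution of the $J_\chi$-equation gives a pointwise cone condition on $(p,p)$-forms, and integrating over subvarieties yields uniform $J$-positivity. Your reverse direction, however, is not a proof but an outline of where the difficulty lies, and you say so yourself. The genuine content of \cite{G.Chen} is precisely the step you flag as ``the principal obstacle'': producing, from a failure of the $C^0$ estimate along the continuity path, a destabilising subvariety violating the uniform inequality. Chen's argument does not proceed via a direct pluripotential mass-concentration analysis as you suggest; rather, it goes through a subsolution criterion (of Song--Weinkove type) and a delicate induction on dimension combined with a concentration-of-mass argument for currents coming from the flow, together with Demailly--P\u{a}un-style regularisation. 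Your sketch captures the shape of the problem but not the mechanism of the solution, so as a proof it has a real gap at the decisive step.
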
 
The following definition is from \cite[Definition 1.1]{Song}:

\begin{definition}
Let $X$ be a K{\"a}hler manifold of dimension $n$ with K{\"a}hler metrics $\omega$ and $\chi$.
\begin{enumerate}
\item
The pair $([\omega],[\chi])$ is said to be \textit{$J$-positive} if we have
$$ p \, \frac{\int_W \chi \wedge \omega^{p-1}}{\int_W \omega^p} <
n \, \frac{\int_X \chi \wedge \omega^{n-1}}{\int_X \omega^n}$$
for any $p$-dimensional subvarieties $W$ of $X$ with $p = 1,2,\dots,{n-1}$.
\item
The pair $[(\omega],[\chi])$ is said to be \textit{$J$-nef} if we have
$$ p \, \frac{\int_W \chi \wedge \omega^{p-1}}{\int_W \omega^p} \le
n \, \frac{\int_X \chi \wedge \omega^{n-1}}{\int_X \omega^n}$$
for any $p$-dimensional subvarieties $W$ of $X$ with $p = 1,2,\dots,{n-1}$.
\end{enumerate}
\end{definition}

\begin{proof}[Proof of Theorem \ref{thm:converse}]
We prove by contradiction. 
Assume that there exists $b \in B$ such that $([\omega_b],[\chi_b])$ is not $J$-nef, where $\omega_b$ and $\chi_b$ are restrictions to the fiber $X_b$ of $\omega_X$ and $\chi$ in the assumption of Theorem \ref{thm:converse}. 
 Then, there exists a $p$-dimensional subvariety $W_b$ of $X_b$ such that 
\begin{equation}\label{eq:V_b}
c_b = m \, \frac{\int_{X_b} \chi_b \wedge \omega_b^{m-1}}{\int_{X_b} \omega_b^m}
<
p \, \frac{\int_{W_b} \chi_b|_{W_b} \wedge \left(\omega_b|_{W_b}\right)^{p-1}}{\int_{W_b} \left(\omega_b|_{W_b}\right)^p}.
\end{equation}

By \eqref{eq:cexpan}, the constant $(m+n) \left(\int_X \chi \wedge \omega_k^{m+n-1}\right) /\left(\int_X \omega_k^{m+n}\right)$ converges to $c_b$ as $k \rightarrow \infty$.
By \eqref{eq:V_b}, this implies that the $p$-dimensional subvariety $W_b$ of $X$ satisfies 
$$(m+n) \, \frac{\int_X \chi \wedge \omega_k^{m+n-1}}{\int_X \omega_k^{m+n}}
<
p \, \frac{\int_{W_b} \chi|_{V_b} \wedge \left(\omega_X|_{W_b}\right)^{p-1}}{\int_{W_b} \left(\omega_X|_{W_b}\right)^p}$$
for all $k \gg 0$.
However, the pair $\left([\omega_k],[\chi]\right)$ is $J$-nef by assumption and this is a contradiction. 
In conclusion, the pair $([\omega_b],[\chi_b])$ on any fiber $X_b$ is $J$-nef.

Similarly, we prove $J$-nefness of the base by contradiction. 
Assume that the pair $\left([\omega_B],[\chi_B]\right)$ is not $J$-nef.
Then, there exists a $p$-dimensional subvariety $W$ of $B$ such that
\begin{equation}\label{eq:V}
n \, \frac{\int_{B} \chi_B \wedge \omega_B^{n-1}}{\int_B \omega_B^n}
<
p \, \frac{\int_W \chi_B \wedge \omega_B^{p-1}}{\int_W \omega_B^p}.
\end{equation}
By the assumption that 
$\Lambda_\mathcal{V} \chi = c_b$ or $\pi^*\left(\pi_B\left(\omega_X\right)_\mathcal{H}\right) = \left(\omega_X\right)_\mathcal{H}$, the constant $C_1$ of \eqref{eq:cexpan} vanishes. 
By the same calculation as \eqref{eq:cexpan}, we have
\begin{align*}
(m+p) \, \frac{\int_{\pi^{-1}W} \chi \wedge \omega_k^{m+p-1}}{\int_{\pi^{-1}W} \omega_k^{m+p}}
&= c_b + k^{-1} p \, \frac{\int_{\pi^{-1}W}\left(\omega_X\right)_\mathcal{V}^m \wedge \chi_\mathcal{H} \wedge \omega_B^{p-1} }{\int_{\pi^{-1}W} \left(\omega_X\right)_\mathcal{V}^m\wedge\omega_B^p} + O\left(k^{-2}\right)\\
&=c_b + k^{-1} p \, \frac{\int_W V_b \, \pi_B\left(\chi_\mathcal{H}\right) \wedge \omega_B^{p-1}}{\int_W V_b \, \omega_B^p} +O\left(k^{-2}\right)\\
&= c_b + k^{-1}p \, \frac{\int_W \chi_B \wedge \omega_B^{p-1}}{\int_W \omega_B^p} + O\left(k^{-2}\right).
\end{align*}
By \eqref{eq:V}, we have
$$ (m+n) \frac{\int_X \chi \wedge \omega_k^{m+n-1}}{\int_X \omega_k^{m+n}}
< 
(m+p)\frac{\int_{\pi^{-1}W} \chi \wedge \omega_k^{m+p-1}}{\int_{\pi^{-1}V} \omega_k^{m+p}}
$$
for any $k \gg 0$.
However, the pair $([\omega_k],[\chi])$ is $J$-nef by assumption and this is a contradiction. 
In conclusion, the pair $([\omega_B],[\pi_B(\chi_\mathcal{H})])$ is also $J$-nef.
\end{proof}

\begin{remark}\label{rem:collapsing}
If we assume the existence of a solution $\omega'_k = \omega_k+ \sqrt{-1}\partial\bar{\partial}\phi_k$ of the $J_\chi$-equation in the class $[\omega_k]$ for all $k \gg 0$,  
for small $t>0$ such that $\chi-t\omega'_k$ is a K{\"a}hler form, 
we have a solution $\omega'_k$ of the $J_{\chi-t\omega'_k}$-equation. 
By Theorem \ref{J-positivity}, we have 
$$\int_{\pi^{-1}W} c \, \omega_k^p - p \, \chi \wedge \omega_k^{p-1}
\ge
(m+n-p)t \, \int_{\pi^{-1}W} \omega_k^p$$
for all $p$-dimensional subvarieties $W$ of  $B$ with $p = 1,2,\dots,{n-1}$.
If a family of the solutions $\omega'_k$ has an order $O(k)$ as $k \rightarrow \infty$, we have $t = O\left(k^{-1}\right)$. Therefore, in the same assumptions of Theorem \ref{thm:converse}, we obtain the $J$-positivity of the pair $([\omega_B],[\pi_B(\chi_\mathcal{H})])$ by the above calculation. In the recent work \cite{GPT}, they proved the $L^\infty$ estimate for a solution of the family of Hessian equations with a certain structural condition. Our situation is not included in the class they studied. It is interesting to see if the method can apply to our case.
\end{remark}

\section{Deformed Hermitian-Yang-Mills equations}\label{sec:dHYM}
For a K{\"a}hler form $\chi$ and a closed real $(1,1)$-form $\omega$, we denote 
\begin{equation}\label{eq:theta}
    \theta_\chi(\omega):=\sum\mathrm{arccot}\, \lambda_i,
\end{equation}
where $\{\lambda_i\}$ are eigenvalues of $\omega\cdot\chi^{-1}$ and $\mathrm{arccot}(x)=\cot^{-1} x\in(0,\pi)$. Here $\cot\theta=1/\tan\theta$. 
Remark that $\theta_\chi$ depends on points, while $\theta$ in the dHYM equation \eqref{eq:dhym} is constant.
Denote by $\theta_\infty$ the constant (which is determined up to $2\pi$ by \eqref{eq:dhymconst}) of the dHYM$_{\chi_b}$ equation \eqref{eq:dhym} in the cohomology class $[\omega_X|_{X_b}]$.
Then, one will always be covered by at least one of the following four cases: the case
\begin{equation}\label{eq:ass}
    \theta_\infty\in(0,\pi)+2\pi\mathbb{Z}, 
\end{equation}
the case $\theta_\infty\in(\pi,2\pi)+2\pi\mathbb{Z}$, the case $\theta_\infty\in(-\pi/2,\pi/2)+2\pi\mathbb{Z}$, and the case $\theta_\infty\in(\pi/2,3\pi/2)+2\pi\mathbb{Z}$.
In this paper, we only discuss the case \eqref{eq:ass}. However, the other cases can be treated by the same manner (see Remark \ref{rem:other}).
The dHYM$_\chi$ equation \eqref{eq:dhym} is equivalent to
$$\cot\theta_\chi(\omega)=\cot\theta,$$
as long as $\theta_\chi(\omega)\notin\pi\mathbb{Z}$ and $\theta_\chi(\omega)-\theta\in(-\pi,\pi)+2\pi\mathbb{Z}$. 
In setup \ref{setup:dhym}, we see that this is the case by Lemma \ref{lem:dhymevalues} and \eqref{eq:ass}. 
In addition, we denote by $\theta$ the constant of the dHYM$_\chi$ equation in the cohomology class $[\omega_X+k\omega_B]$, although it depends on $k$.

\subsection{Preliminaries}\label{sec:dhympre}
Let $(X,\chi)$ be an $n$-dimensional compact K{\"a}hler manifold and $\omega$ a closed real $(1,1)$-form that satisfies $\theta_\chi(\omega)\notin\pi\mathbb{Z}$.
\begin{definition}\label{def:tilF-op}
    We define an operator $\tilde{F}_{\omega,\chi}:C^\infty(X)\to C^\infty(X)$ by
    \begin{align*}
        \tilde{F}_{\omega,\chi}(\phi)=&n\frac{\sqrt{-1}\partial\bar{\partial}\phi\wedge\left(\mathrm{Re}(\omega+\sqrt{-1}\chi)^{n-1}-\cot\theta_\chi(\omega)\mathrm{Im}(\omega+\sqrt{-1}\chi)^{n-1}\right)}{\mathrm{Im}(\omega+\sqrt{-1}\chi)^n}\\&\qquad-n\frac{\sqrt{-1}\partial\cot\theta_\chi(\omega)\wedge\bar{\partial}\phi\wedge\mathrm{Im}(\omega+\sqrt{-1}\chi)^{n-1}}{\mathrm{Im}(\omega+\sqrt{-1}\chi)^n}.
    \end{align*}
    Note that this is well defined since $\mathrm{Im}(\omega+\sqrt{-1}\chi)^n\neq0$ by $\theta_\chi(\omega)\notin\pi\mathbb{Z}$.
\end{definition}

\begin{remark}
    The operator $\tilde{F}_{\omega,\chi}$ becomes the linearization of the operator 
    $$\frac{\mathrm{Re}(\omega+\sqrt{-1}\chi)^{n}}{\mathrm{Im}(\omega+\sqrt{-1}\chi)^{n}}$$
    in $[\omega]$ at a solution of the dHYM$_\chi$ equation.
\end{remark}

\begin{lemma}\label{lem:dhymadjoint}
    The operator $\tilde{F}_{\omega,\chi}$ is a complex self-adjoint second order elliptic linear operator. Moreover, it satisfies
    \begin{align*}
        &\int_X \phi \tilde{F}_{\omega,\chi}(\psi)\mathrm{Im}(\omega+\sqrt{-1}\chi)^{n}\\
        =&\int_X n\sqrt{-1}\partial\phi\wedge \bar{\partial}\psi\wedge \left(\cot{\theta_\chi(\omega)}\mathrm{Im}(\omega+\sqrt{-1}\chi)^{n-1}-\mathrm{Re}(\omega+\sqrt{-1}\chi)^{n-1}\right).
    \end{align*}
    In particular,
    the subspace $\ker \tilde{F}_{\omega,\chi}$ of $C^\infty(X)$ consists of constant functions on $X$. 
\end{lemma}

\begin{proof}
Fix $p\in X$ and take a coordinate neighborhood around $p$ such that $\chi(p)=\sum\sqrt{-1}dz^i\wedge d\bar{z}^i$ and $\omega(p)=\sum\lambda_i\sqrt{-1}dz^i\wedge d\bar{z}^i$. Then, at $p$, we can see that
\begin{equation}\label{eq: posi}
\begin{split}
    &\left(\cos\theta_\chi(\omega)\mathrm{Im}(\omega+\sqrt{-1}\chi)^{n-1}-\sin\theta_\chi(\omega)\mathrm{Re}(\omega+\sqrt{-1}\chi)^{n-1}\right)\big/(n-1)! \\
    =&\sum_{i=1}^n \sin \left(\sum_{j\neq i} \mathrm{arccot}\,\lambda_j-\theta_\chi(\omega)\right)\left(\prod_{j\neq i}\sqrt{\lambda_j^2+1}\sqrt{-1}dz^j\wedge d\bar{z}^j\right)\\
    =&\sum_{i=1}^n \sin \left(-\mathrm{arccot}\,\lambda_i\right)\left(\prod_{j\neq i}\sqrt{\lambda_j^2+1}\sqrt{-1}dz^j\wedge d\bar{z}^j\right)<0,
\end{split}
\end{equation}
since $\mathrm{arccot}\, \lambda_i\in (0,\pi)$. Therefore, the operator $\tilde{F}_{\omega,\chi}$ is elliptic.
Moreover, using integration by parts, we obtain
\begin{align*}
    &\int_X \phi \tilde{F}_{\omega,\chi}(\psi)\mathrm{Im}(\omega+\sqrt{-1}\chi)^{\dim X}\\
    =&\int_X n\phi \sqrt{-1}\partial\bar{\partial}\psi\wedge\left(\mathrm{Re}(\omega+\sqrt{-1}\chi)^{n-1}-\cot\theta_\chi(\omega)\mathrm{Im}(\omega+\sqrt{-1}\chi)^{n-1}\right)\\ &\quad -n\phi\sqrt{-1}\partial\cot\theta_\chi(\omega)\wedge\bar{\partial}\psi\wedge\mathrm{Im}(\omega+\sqrt{-1}\chi)^{n-1}\\
    =&\int_Xn\sqrt{-1}\partial\phi\wedge \bar{\partial}\psi\wedge \left(\cot{\theta_\chi(\omega)}\mathrm{Im}(\omega+\sqrt{-1}\chi)^{n-1}-\mathrm{Re}(\omega+\sqrt{-1}\chi)^{n-1}\right).
\end{align*}
Thus, the operator $\tilde{F}_{\omega,\chi}$ is complex self-adjoint and $\ker\tilde{F}_{\omega,\chi}$ is precisely the set of constant functions by \eqref{eq: posi}.
\end{proof}

From now on we assume Setup \ref{setup:dhym}. In Setup \ref{setup:dhym}, the tangent space $TX$ splits as a smooth bundle 
\begin{equation}\label{eq:split}
    TX \cong \mathcal{V} \oplus \mathcal{H}, 
\end{equation}
where $\mathcal{V} = \mathop{\mathrm{ker}}d\pi $ denotes the vertical tangent bundle and $\mathcal{H}$ denotes the horizontal subbundle of $TX$ defined by 
$$ \mathcal{H}_x= \{ u \in T_x X \mid \chi (u, v)=0 \; \textrm{for all} \; v \in \mathcal{V}_x \}.$$ 
By this splitting, the form $\omega_X$ on $X$ is divided into the purely vertical component $(\omega_X)_\mathcal{V}$, the purely horizontal component $(\omega_X)_\mathcal{H}$ and the mixed component $(\omega_X)_m$.

\begin{lemma}\label{lem:dhymkahler}
    If $\omega_b$ is a solution of the dHYM$_{\chi_b}$ equation on each fiber $X_b$, then the form 
    $$\chi_B:=-\frac{\pi_*\Bigl(\mathrm{Re}(\omega_X+\sqrt{-1}\chi)^{m+1}-{c_0}\,\mathrm{Im}(\omega_X+\sqrt{-1}\chi)^{m+1}\Bigr)}{\pi_*\Bigl(\mathrm{Im}(\omega_X+\sqrt{-1}\chi)^m\Bigr)}$$
    is a K{\"a}hler form on $B$, where $c_0:=(\int_{X_b}\mathrm{Re}(\omega_b+\sqrt{-1}\chi_b)^m)/(\int_{X_b}\mathrm{Im}(\omega_b+\sqrt{-1}\chi_b)^m)$. 
\end{lemma}

\begin{proof}
The $J$-invariance and the closedness hold since $\omega_X$ and $\chi$ satisfy them. It remains to show the positivity.
Note that we have
\begin{align*}
    &\pi_*\Bigl(\mathrm{Re}(\omega_X+\sqrt{-1}\chi)^{m+1}-c_0\,\mathrm{Im}(\omega_X+\sqrt{-1}\chi)^{m+1}\Bigr)\\
    =&(m+1)\pi_*\Big(\mathrm{Re}(\omega_X+\sqrt{-1}\chi)_\mathcal{V}^m-c_0\,\mathrm{Im}(\omega_X+\sqrt{-1}\chi)_\mathcal{V}^m\Big)\wedge(\omega_X)_\mathcal{H}\\
    &-(m+1)\pi_*\Big(\mathrm{Im}(\omega_X+\sqrt{-1}\chi)_\mathcal{V}^m+c_0\,\mathrm{Re}(\omega_X+\sqrt{-1}\chi)_\mathcal{V}^m\Big)\wedge\chi_\mathcal{H}\\
    &+{m+1\choose2}\pi_*\Bigl(\mathrm{Re}(\omega_X+\sqrt{-1}\chi)_\mathcal{V}^{m-1}-c_0\,\mathrm{Im}(\omega_X+\sqrt{-1}\chi)_\mathcal{V}^{m-1}\Bigr)\wedge(\omega_X)_m^2.
\end{align*}
Since $\omega_b$ is a solution of the $\mathrm{dHYM}_{\chi_b}$ equation, the first term of the right-hand side vanishes, while the second term is negative since $\sin{\theta_\infty}$ and $\chi_\mathcal{H}$ is positive. Also, the same calculation as \eqref{eq: posi} shows that the $(m-1,m-1)$-form in the third term of the right-hand side is positive since $\sin\theta_\infty$ is positive. On the other hand, we can see that $i(Jv)i(v)(\omega_X)^2_m$ is a seminegative $(1,1)$-form, where $v\in \mathcal{H}$ and $i(\cdot)$ denotes the inner product. Indeed, let us denote 
$(\omega_X)_m=\sum (\omega_X)_{\alpha j}dx^\alpha\wedge dx^j$ and $v=\sum 
v^j\partial/\partial x^j$, 
where Greek letters (resp.\,Latin letters) represent the coordinates in the fiber (resp.\,horizontal) direction. Then, for $u\in\mathcal{V}$, by $J$-invariance of $\omega_X$, we have
\begin{align*}
    \left(i(Jv)i(v)(\omega_X)_m^2\right)(u,Ju)=&-(\omega_X)_{\alpha j}(\omega_X)_{\beta k} v^j (Jv)^k u^\alpha (Ju)^\beta\\
    =&-\left(\omega_X(u,v)\right)^2\le0
\end{align*}
which implies that $i(Jv)i(v)(\omega_X)^2_m$ is a seminegative $(1,1)$-form. Now as in the proof of Lemma \ref{lem:Kahler}, fix a point $b \in B$ and a vector $u \in T_bB$ and take a local trivialization $\psi: X_U \cong Y \times U$ on a neighborhood $U$ of $b$, such that the splitting $TX \cong \mathcal{V} \oplus \mathcal{H}$ coincides with $TY \oplus TU$ on a fiber $X_b$ via $\psi_*$. Then, for $u\in T_bB$, we see that
$$-\Bigl(\mathrm{Re}(\omega_X+\sqrt{-1}\chi)^{m+1}-c_0\,\mathrm{Im}(\omega_X+\sqrt{-1}\chi)^{m+1}\Bigr)(\psi^{-1}_*u,J\psi^{-1}_*u)$$
is a volume form on $X_b$. Thus, the same calculation as in the proof of Lemma \ref{lem:Kahler} shows that $\chi_B$ is positive.
\end{proof}

For a function $f$ on $X$, we define a function $f_B$ by
\begin{equation}\label{eq:pitilde}
    \tilde{\pi}_B(f)(b)=V_b^{-1}\int_{X_b}f|_{X_b}\, \mathrm{Im}(\omega_b+\sqrt{-1}\chi_b)^m,
\end{equation}
where $V_b:=\int_X\mathrm{Im}(\omega_b+\sqrt{-1}\chi_b)^m$.
Denote the subspace $\ker\bigl(\tilde{\pi}_B:C^\infty(X)\to C^\infty(B)\bigr)$ by $C^\infty_0(X)$. In particular, we have a decomposition $f=f_B+f_\mathcal{V}$, where $f_B=\pi^*\tilde{\pi}_B(f)\in C^\infty(B)$ and $f_\mathcal{V}=f-f_B\in C_0^\infty(X)$.

\subsection{Approximate solutions}
In this subsection, as in the $J$-equation case, we construct a family of approximate solutions:
\begin{proposition}\label{prop:dhymapp}
    For any $r\in\mathbb{Z}_{\ge0}$, there exist $\{\phi_{i,B}\}^r_{i=0}\subset C^\infty(B)$ and $\{\phi_{i,\mathcal{V}}\}^r_{i=0}\subset C^\infty_0(X)$ such that
    \begin{equation}\label{eq:dhymapp}
        \omega_{k, r} = \omega_k + \sqrt{-1} \partial \bar{\partial} \left( \sum_{i=0}^r \phi_{i, B} k^{2-i} + \sum_{i=0}^r \phi_{i, \mathcal{V}} k^{-i} \right)
    \end{equation}
    satisfies 
    $$\frac{\mathrm{Re}(\omega_{k,r}+\sqrt{-1}\chi)^{m+n}}{\mathrm{Im}(\omega_{k,r}+\sqrt{-1}\chi)^{m+n}}=\sum_{i=0}^r k^{-i} {c_i} + O \left(k^{-r-1} \right), $$
    where $ {c_i} $ are constants.
\end{proposition}

We prove this proposition by the same arguments as in the $J$-equation case.

\begin{lemma}\label{lem:dhymO(1)-term}
Suppose that $\omega_b$ is a solution of the dHYM$_{\chi_b}$ equation on each fiber $X_b$ and $\omega_B$ is a solution of the $J_{\chi_B}$-equation. Then, we have
\begin{align*}
    \frac{\mathrm{Re}(\omega_k+\sqrt{-1}\chi)^{m+n}}{\mathrm{Im}(\omega_k+\sqrt{-1}\chi)^{m+n}}
    =c_0+k^{-1}\left(c_1+(f_1)_{\mathcal{V}}\right)+O(k^{-2}),
\end{align*}
where $c_0$ and $c_1$ are constants and $(f_1)_{\mathcal{V}}$ is a function in $C^\infty_0(X)$.
\end{lemma}

\begin{proof}
    We have
    \begin{align*}
        &\frac{\mathrm{Re}(\omega_X+\sqrt{-1}\chi+k\omega_B)^{m+n}}{\mathrm{Im}(\omega_X+\sqrt{-1}\chi+k\omega_B)^{m+n}}\\
        =&\frac{\mathrm{Re}(\omega_X+\sqrt{-1}\chi)^m\wedge\omega_B^n}{\mathrm{Im}(\omega_X+\sqrt{-1}\chi)^m\wedge\omega_B^n}+k^{-1}\frac{n}{m+1}\frac{\mathrm{Re}(\omega_X+\sqrt{-1}\chi)^{m+1}\wedge\omega_B^{n-1}}{\mathrm{Im}(\omega_X+\sqrt{-1}\chi)^m\wedge\omega_B^n}\\
        &-k^{-1}\frac{n}{m+1}\frac{\mathrm{Re}(\omega_X+\sqrt{-1}\chi)^m\wedge\omega_B^n}{\mathrm{Im}(\omega_X+\sqrt{-1}\chi)^m\wedge\omega_B^n}\frac{\mathrm{Im}(\omega_X+\sqrt{-1}\chi)^{m+1}\wedge\omega_B^{n-1}}{\mathrm{Im}(\omega_X+\sqrt{-1}\chi)^m\wedge\omega_B^n}+O(k^{-2})\\
        =&c_0\\
        &+k^{-1}\frac{n}{m+1}\left(\frac{\mathrm{Re}(\omega_X+\sqrt{-1}\chi)^{m+1}\wedge\omega^{n-1}_B-c_0\,\mathrm{Im}(\omega_X+\sqrt{-1}\chi)^{m+1}\wedge\omega_B^{n-1}}{\mathrm{Im}(\omega_X+\sqrt{-1}\chi)^m\wedge\omega_B^n}\right)\\
        &+O(k^{-2}).
    \end{align*}
    Now, the $C^\infty(B)$-part in the decomposition by \eqref{eq:pitilde} of the term in the second last line is constant by the assumption that $\omega_B$ is a solution of the $J_{\chi_B}$-equation. Thus, by defining
    \begin{equation}\label{eq:dhymc_1}
        c_1=-\frac{n}{m+1}\frac{\chi_B\wedge\omega_B^{n-1}}{\omega_B^n},
    \end{equation}
    we obtain the assertion.
\end{proof}

\begin{definition}
For $\phi \in C^\infty(X)$, we define the operator $\tilde{F}_{ \left( \omega_X \right)_\mathcal{V}, \chi_\mathcal{V}}$ by 
$$\Bigl( \tilde{F}_{ \left( \omega_X \right)_\mathcal{V}, \chi_\mathcal{V}} (\phi) \Bigr) \Big\vert_{X_b} = \tilde{F}_{ \omega_b, \chi_b} \left( \phi\vert_{X_b} \right).$$
Note that $\tilde{F}_{ \left( \omega_X \right)_\mathcal{V}, \chi_\mathcal{V}} (\phi)\in C^\infty(X)$ since the operator $\tilde{F}_{\omega_b,\chi_b}$ depends smoothly on $b$ by Definition \ref{def:tilF-op}.
\end{definition}

\begin{lemma}
 For $\phi \in C^\infty(X)$, we have
$$\frac{d}{dt}\Big|_{t = 0}\frac{\mathrm{Re}(\omega_{k,r,t\phi}+\sqrt{-1}\chi)^{m+n}}{\mathrm{Im}(\omega_{k,r,t\phi}+\sqrt{-1}\chi)^{m+n}}
= \tilde{F}_{ \left( \omega_X \right)_\mathcal{V}, \chi_\mathcal{V}} \left(\phi \right) + k^{-1}D_1(\phi) + k^{-2}D_2(\phi)+O(k^{-3}),$$
where the operators $D_1$ and $D_2$ satisfy $D_1(\phi)=0$ and $\tilde{\pi}_B\bigl(D_2(\phi)\bigr)=-F_{\omega_B,\chi_B}(\phi)/(m+1)$ for $\phi\in C^\infty(B)$. Here, the operator $F_{\omega_B,\chi_B}$ is given by Definition \ref{def:F-op} and we suppress pullbacks via $\pi$,
so for a function $\phi\in C^\infty(B)$ its pullback to $X$ is also denoted by $\phi$.
\end{lemma}

\begin{proof}
    We have
    \begin{align*}
        &\frac{d}{dt}\Big|_{t = 0}\frac{\mathrm{Re}(\omega_{k,r,t\phi}+\sqrt{-1}\chi)^{m+n}}{\mathrm{Im}(\omega_{k,r,t\phi}+\sqrt{-1}\chi)^{m+n}}\\
        =&(m+n)\frac{\mathrm{Re}(\omega_{k,r}+\sqrt{-1}\chi)^{m+n-1}\wedge\sqrt{-1}\partial\bar{\partial}\phi}{\mathrm{Im}(\omega_{k,r}+\sqrt{-1}\chi)^{m+n}}\\
        &-(m+n)\frac{\mathrm{Re}(\omega_{k,r}+\sqrt{-1}\chi)^{m+n}}{\mathrm{Im}(\omega_{k,r}+\sqrt{-1}\chi)^{m+n}}\frac{\mathrm{Im}(\omega_{k,r}+\sqrt{-1}\chi)^{m+n-1}\wedge\sqrt{-1}\partial\bar{\partial}\phi}{\mathrm{Im}(\omega_{k,r}+\sqrt{-1}\chi)^{m+n}}\\
        =&m\frac{\mathrm{Re}((\omega_X)_\mathcal{V}+\sqrt{-1}\chi_\mathcal{V})^{m-1}\wedge(\sqrt{-1}\partial\bar{\partial}\phi)_\mathcal{V}}{\mathrm{Im}((\omega_X)_\mathcal{V}+\sqrt{-1}\chi_\mathcal{V})^m}\\
        &-m\frac{\mathrm{Re}((\omega_X)_\mathcal{V}+\sqrt{-1}\chi_\mathcal{V})^m}{\mathrm{Im}((\omega_X)_\mathcal
        V+\sqrt{-1}\chi_\mathcal{V})^m}\frac{\mathrm{Im}((\omega_X)_\mathcal{V}+\sqrt{-1}\chi_\mathcal{V})^{m-1}\wedge(\sqrt{-1}\partial\bar{\partial}\phi)_\mathcal{V}}{\mathrm{Im}((\omega_X)_\mathcal{V}+\sqrt{-1}\chi_\mathcal{V})^m}\\
        &+k^{-1}D_1(\phi)+k^{-2}D_2(\phi)+O(k^{-3})\\
        =&\tilde{F}_{(\omega_X)_\mathcal{V},\chi_\mathcal{V}}(\phi)+k^{-1}D_1(\phi)+k^{-2}D_2(\phi)+O(k^{-3}),
    \end{align*}
    where the first equation follows since $\sqrt{-1}\partial\bar{\partial}\phi$ is real.
    For $\phi\in C^\infty(B)$, noting that $\sqrt{-1}\partial\bar{\partial}\phi$ has only a horizontal part, 
    we have
    \begin{align*}
        D_1(\phi)
        =&n\frac{\mathrm{Re}((\omega_X)_\mathcal{V}+\sqrt{-1}\chi_\mathcal{V})^m\wedge\omega_B^{n-1}\wedge\sqrt{-1}\partial\bar{\partial}\phi}{\mathrm{Im}((\omega_X)_\mathcal{V}+\sqrt{-1}\chi_\mathcal{V})^m\wedge\omega_B^n}\\
        &-n\frac{\mathrm{Re}((\omega_X)_\mathcal{V}+\sqrt{-1}\chi_\mathcal{V})^m}{\mathrm{Im}((\omega_X)_\mathcal{V}+\sqrt{-1}\chi_\mathcal{V})^m}\frac{\mathrm{Im}((\omega_X)_\mathcal{V}+\sqrt{-1}\chi_\mathcal{V})^m\wedge\omega_B^{n-1}\wedge\sqrt{-1}\partial\bar{\partial}\phi}{\mathrm{Im}((\omega_X)_\mathcal{V}+\sqrt{-1}\chi_\mathcal{V})^m\wedge\omega_B^n}\\
        =&0
    \end{align*}
    Moreover, we have
    \begin{align*}
        &\tilde{\pi}_BD_2(\phi)\\
        =&\tilde{\pi}_B\Biggl(\frac{n(n-1)}{m+1}\frac{\mathrm{Re}(\omega_X+\sqrt{-1}\partial\bar{\partial}\phi_{2,B}+\sqrt{-1}\chi)^{m+1}\wedge\omega_B^{n-2}\wedge\sqrt{-1}\partial\bar{\partial}\phi}{\mathrm{Im}((\omega_X)_\mathcal{V}+\sqrt{-1}\chi_\mathcal{V})^m\wedge\omega_B^n}\\
        &\qquad-\frac{n(n-1)}{m+1}\frac{\mathrm{Re}((\omega_X)_\mathcal{V}+\sqrt{-1}\chi_\mathcal{V})^m}{\mathrm{Im}((\omega_X)_\mathcal{V}+\sqrt{-1}\chi_\mathcal{V})^m}\frac{\mathrm{Im}(\omega_X+\sqrt{-1}\partial\bar{
        \partial}\phi_{2,B}+\sqrt{-1}\chi)^{m+1}\wedge\omega_B^{n-2}\wedge\sqrt{-1}\partial\bar{\partial}\phi}{\mathrm{Im}((\omega_X)_\mathcal{V}+\sqrt{-1}\chi_\mathcal{V})^m\wedge\omega_B^n}\Biggr)\\
        &+\tilde{\pi}_B\Biggl(nm\frac{\mathrm{Re}((\omega_X)_\mathcal{V}+\sqrt{-1}\chi_\mathcal{V})^{m-1}\wedge\sqrt{-1}\partial\bar{\partial}\phi_{1,\mathcal{V}}\wedge\omega_B^{n-1}\wedge\sqrt{-1}\partial\bar{\partial}\phi}{\mathrm{Im}((\omega_X)_\mathcal{V}+\sqrt{-1}\chi_\mathcal{V})^m\wedge\omega_B^n}\\
        &\qquad-nm\frac{\mathrm{Re}((\omega_X)_\mathcal{V}+\sqrt{-1}\chi_\mathcal{V})^m}{\mathrm{Im}((\omega_X)_\mathcal{V}+\sqrt{-1}\chi_\mathcal{V})^m}\frac{\mathrm{Im}((\omega_X)_\mathcal{V}+\sqrt{-1}\chi_\mathcal{V})^{m-1}\wedge
        \sqrt{-1}\partial\bar{\partial}\phi_{1,\mathcal{V}}\wedge\omega_B^{n-1}\wedge\sqrt{-1}\partial\bar{\partial}\phi}{\mathrm{Im}((\omega_X)_\mathcal{V}+\sqrt{-1}\chi_\mathcal{V})^m\wedge\omega_B^n}\Biggr)\\
        &-\tilde{\pi}_B\Biggl(c_1n\frac{\omega_B^{n-1}\wedge\sqrt{-1}\partial\bar{\partial}\phi}{\omega_B^n}\Biggr)\\
        =-&\frac{n(n-1)}{m+1}\frac{\chi_B\wedge\omega_B^{n-2}\wedge\sqrt{-1}\partial\bar{\partial}\phi}{\omega_B^n}-c_1n\frac{\omega_B^{n-1}\wedge\sqrt{-1}\partial\bar{\partial}\phi}{\omega_B^n}\\
        =-&F_{\omega_B,\chi_B}(\phi)/(m+1).
    \end{align*}
    Thus, we confirm the assertion.
\end{proof}

Combining the lemmas we obtained, the same arguments as in the proof of Proposition \ref{prop:app} give a proof of Proposition \ref{prop:dhymapp}.

\subsection{First proof of Theorem \ref{thm:dhym_main} under the supercritical phase condition}\label{sec:dhymfirst}
A solution $\omega$ of the dHYM$_\chi$ equation is equivalent to a solution of  
\begin{equation}\label{eq:dhymlift}
    \theta_\chi(\omega) = \hat{\theta},
\end{equation}
where $\hat{\theta} \in \mathbb{R}$ is a constant satisfying \eqref{eq:dhymconst}.  
The constant $\hat{\theta}$ is called the \emph{lifted phase}.  
Note that the solution $\omega$ determines $\hat{\theta}$, whereas condition \eqref{eq:dhymconst} only determines the phase $\theta$ modulo $2\pi$.  
According to \cite[Definition~2.5]{CXY}, if there exists a form $\omega$ such that $\operatorname{osc}\theta_\chi(\omega) < \pi$,  
then the lifted phase $\hat{\theta}$ can be uniquely defined in the interval $[\inf \theta_\chi(\omega),\, \sup \theta_\chi(\omega)]$.  
Under the assumption of Theorem~\ref{thm:dhym_main}, Lemma~\ref{lem:dhymevalues} shows that this condition is satisfied.  

The \emph{supercritical phase condition} is defined by requiring that $\hat{\theta} \in (0,\pi)$.  
In our setting, Lemma~\ref{lem:dhymevalues} further implies that the supercritical condition for $k \gg 0$  
is equivalent to $\hat{\theta}_\infty \in (0,\pi)$,  
where $\hat{\theta}_\infty$ denotes the lifted phase of the dHYM equation on the fiber determined by the solution.  

The solvability of the supercritical dHYM equation is characterized by the following theorem:
\begin{theorem}[{\cite[Theorem 1.2]{CJY}}]\label{thm:CJY}
    Let $(X,\chi)$ be an $n$-dimensional compact K{\"a}hler manifold and $\omega$ a closed real $(1,1)$-form. 
    The following are equivalent:
    \begin{enumerate}[font=\normalfont]
        \item There exists a solution $\omega'\in[\omega]$ of the dHYM$_\chi$ equation \eqref{eq:dhymlift} with $\hat{\theta}\in (0,\pi)$.
        \item There exists a supercritical subsolution $\omega'\in[\omega]$ of the dHYM$_\chi$ equation in the sense of \cite{CJY}, that is, the closed real $(1,1)$-form $\omega'\in[\omega]$ satisfies $\theta_\chi(\omega')\in(0,\pi)$ and $\mathrm{Re}(\omega'+\sqrt{-1}\chi)^{n-1}-\cot\theta \, \mathrm{Im}(\omega'+\sqrt{-1}\chi)^{n-1}>0$.        
    \end{enumerate}
\end{theorem}

\begin{proof}[Proof of Theorem \ref{thm:dhym_main} under the supercritical phase condition]
    By Theorem \ref{thm:CJY}, it suffices to find a supercritical subsolution in $[\omega_k]$ for $k\gg0$.
    By a simple calculation, we can expand $\cot\theta=\sum k^{-i}c_i$ as 
    \begin{equation}\label{eq:dhymcexpan}
        \begin{split}
        &\cot\theta=\frac{\int_X\mathrm{Re(\omega_k+\sqrt{-1}\chi)^{m+n}}}{\int_X\mathrm{Im}(\omega_k+\sqrt{-1}\chi)^{m+n}}\\
        =&\frac{\int_X\mathrm{Re}(\omega_X+\sqrt{-1}\chi)^m\wedge\omega_B^n}{\int_X\mathrm{Im}(\omega_X+\sqrt{-1}\chi)^m\wedge\omega_B^n}+k^{-1}\frac{n}{m+1}\frac{\int_X\mathrm{Re}(\omega_X+\sqrt{-1}\chi)^{m+1}\wedge\omega_B^{n-1}}{\int_X\mathrm{Im}(\omega_X+\sqrt{-1}\chi)^m\wedge\omega_B^n}\\
        &+k^{-1}\frac{n}{m+1}\frac{\int_X\mathrm{Re}(\omega_X+\sqrt{-1}\chi)^m\wedge\omega_B^n}{\int_X\mathrm{Im}(\omega_X+\sqrt{-1}\chi)^m\wedge\omega_B^n}\frac{\int_X\mathrm{Im}(\omega_X+\sqrt{-1}\chi)^{m+1}\wedge\omega_B^{n-1}}{\int_X\mathrm{Im}(\omega_X+\sqrt{-1}\chi)^m\wedge\omega_B^n}\\
        &+O(k^{-2}).
        \end{split}
    \end{equation}
    The coefficient $c_0$ of the $k^{0}$-order term of \eqref{eq:dhymcexpan} is the constant of the dHYM$_{\chi_b}$ equation. Since $\omega_b$ is a solution of the dHYM$_{\chi_b}$ equation on each fiber $X_b$, we see that the coefficient $c_1$ of the $k^{-1}$-order term of \eqref{eq:dhymcexpan} is the same as \eqref{eq:dhymc_1}.
    Fix a point $p$ in $X$. Let $\alpha$ be a non-zero semipositive form at $p$. We first calculate with $\omega_k$. At $p$, we have
    \begin{align*}
        &\left(\mathrm{Re}(\omega_k+\sqrt{-1}\chi)^{m+n-1}-\cot\theta\, \mathrm{Im}(\omega_k+\sqrt{-1}\chi)^{m+n-1}\right)\wedge\alpha\\
        =&k^n {{m+n-1}\choose n} \big(\mathrm{Rm}(\omega_X+\sqrt{-1}\chi)^{m-1}-c_0\mathrm{Im}(\omega_X+\sqrt{-1}\chi)^{m-1}\big)\wedge\omega_B^n\wedge\alpha\notag\\
        &+k^{n-1}{m+n-1\choose n-1} \big(\mathrm{Rm}(\omega_X+\sqrt{-1}\chi)^m -c_0\mathrm{Im}(\omega_X+\sqrt{-1}\chi)^m\big)\wedge\omega_B^{n-1}\wedge\alpha\notag\\
        &-k^{n-1}{m+n-1\choose n}c_1\mathrm{Im}(\omega_X+\sqrt{-1}\chi)^{m-1}\wedge\omega_B^n\wedge\alpha\notag\\
        &+k^{n-2}{{m+n-1}\choose n-2} \big(\mathrm{Re}(\omega_X+\sqrt{-1}\chi)^{m+1}-c_0\mathrm{Im}(\omega_X+\sqrt{-1}\chi)^{m+1}\big)\wedge\omega_B^{n-2}\wedge\alpha\notag\\
        &-k^{n-2} {{m+n-1}\choose n-1} c_1\mathrm{Im}(\omega_X+\sqrt{-1}\chi)^m\wedge\omega_B^{n-1}\wedge\alpha\notag\\
        &-k^{n-2}{{m+n-1}\choose n} c_2\mathrm{Im}(\omega_X+\sqrt{-1}\chi)^{m-1}\wedge\omega_B^{n}\wedge\alpha+O(k^{n-3})\notag.
    \end{align*}
    We first consider the $k^n$-order term. Since $\omega_{\pi(p)}$ is a solution of the dHYM$_{\chi_{\pi(p)}}$ equation on each fiber, it is a supercritical subsolution of the dHYM$_{\chi_{\pi(p)}}$ equation on each fiber. By the assumption that $\alpha_{\mathcal{V}}$ is a semipositive form at $p$, we see that the $k^n$-order term is positive if $\alpha_{\mathcal{V}}$ is non-zero. Let us perturb $\omega_{k}$ to $\omega_{k,1}:=\omega_k+\sqrt{-1}\partial\bar{\partial}k^{-1}\phi_{1,\mathcal{V}}$. Then, since this perturbation only contains $O(k^{-1})$-term, the leading term is unchanged and hence still positive if $\alpha_{\mathcal{V}}$ is non-zero. 
    
    Now let us assume $\alpha_{\mathcal{V}}$ is zero. 
    Then, as in the proof of Theorem \ref{thm:main} in Subsection \ref{sec:firstproof}, the mixed term $\alpha_m$ is also zero.  
    In the above calculation, the first term of the $k^{n-1}$-order terms vanishes since $\alpha_{\mathcal{V}}$ is zero and $\omega_{\pi(p)}$ is a solution of the dHYM$_{\chi_{\pi(p)}}$ equation. The second term of the $k^{n-1}$-order terms and the third term of the $k^{n-2}$-order terms also vanish since $\alpha_{\mathcal{V}}$ is zero. 
    When replacing $\omega_{k}$ with $\omega_{k,1}:=\omega_k+\sqrt{-1}\partial\bar{\partial}k^{-1}\phi_{1,\mathcal{V}}$, there is another $k^{n-1}$-order term coming from the combination of the $k^n$-order term in the above calculation and $k^{-1}\sqrt{-1}\bar{\partial}\partial\phi_{1,\mathcal{V}}$. However, this term still vanishes since $\alpha_\mathcal{V}=0$.
    By the assumption that $\alpha=\alpha_{\mathcal{H}}$, we see that the $k^{n-2}$-order term becomes
    \begin{align}\label{eq:dhympush}
        &k^{n-2}{{m+n-1}\choose n-2} \big(\mathrm{Re}(\omega_X+\sqrt{-1}\chi)^{m+1}-c_0\mathrm{Im}(\omega_X+\sqrt{-1}\chi)^{m+1}\big)\wedge\omega_B^{n-2}\wedge\alpha\\
        -&k^{n-2} {{m+n-1}\choose n-1} c_1\mathrm{Im}(\omega_X+\sqrt{-1}\chi)^m\wedge\omega_B^{n-1}\wedge\alpha\notag\\
        +&k^{n-2}{m+n-1\choose n-1} \big(\mathrm{Rm}(\omega_X+\sqrt{-1}\chi)^{m-1} -c_0\mathrm{Im}(\omega_X+\sqrt{-1}\chi)^{m-1}\big)\wedge\sqrt{-1}\partial\bar{\partial}\phi_{1,\mathcal{V}}\wedge\omega_B^{n-1}\wedge\alpha\notag.
    \end{align}
    The function $\phi_{1,\mathcal{V}}$ satisfies $\tilde{F}_{(\omega_{X})_\mathcal{V},\chi_{\mathcal{V}}}(\phi_{1,\mathcal{V}})=-(f_1)_{\mathcal{V}}$, which in other words means that the last term of \eqref{eq:dhympush} equals 
    $$-k^{n-2}{m+n-1\choose n-1}(f_1)_{\mathcal{V}}\, \mathrm{Im}(\omega_X+\sqrt{-1}\chi)^m\wedge\omega_B^{n-1}\wedge\alpha.$$ 
    Let us define a $(1,1)$-form $\eta_\mathcal{H}$ by $$\eta_\mathcal{H}\wedge\mathrm{Im}(\omega_X+\sqrt{-1}\chi)_{\mathcal{V}}^m=-\big(\mathrm{Re}(\omega_X+\sqrt{-1}\chi)^{m+1}-c_0\mathrm{Im}(\omega_X+\sqrt{-1}\chi)^{m+1}\big).$$
    Then, the arguments in the proof of Lemma \ref{lem:dhymkahler} implies that $\eta_\mathcal{H}$ is a positive $(1,1)$-form.
    Take local holomorphic coordinates of $\mathcal{H}$ around $p$ in which $\omega_B=\sum\sqrt{-1}dz^i\wedge d\bar{z}^i$ and $\eta_{\mathcal{H}}=\sum\lambda_i\sqrt{-1}dz^i\wedge d\bar{z}^i$ at $p$. Then, we see that
    \begin{align*}
        &-(n-1)\eta_{\mathcal{H}}\wedge\omega_B^{n-2}-(m+1)\big(c_1+(f_1)_{\mathcal{V}}\big)\omega_B^{n-1}\\
        =&-(n-1)\eta_{\mathcal{H}}\wedge\omega_B^{n-2}+(\Lambda_{\omega_B}\eta_\mathcal{H})\omega_B^{n-1}\\
        =&\sum_i\left(\lambda_i\prod_{j\neq i}\sqrt{-1}dz^j\wedge d\bar{z}^j\right)>0,
    \end{align*}
    where the first equality follows from the definition of $(f_1)_{\mathcal{V}}$ (see the proof of Lemma \ref{lem:dhymO(1)-term}), and the last positivity follows from that $\eta_{\mathcal{H}}$ is a positive form. Therefore, in either case where $\alpha_{\mathcal{V}}$ are zero or not, we have proved that $(\mathrm{Re}(\omega_{k,1}+\sqrt{-1}\chi)^{m+n-1}-\cot\theta\, \mathrm{Im}(\omega_{k,1}+\sqrt{-1}\chi)^{m+n-1})\wedge\alpha$ is positive for $k\gg0$. Since the arguments above do not depend on $p$ or $\alpha$, we have proved that the form $\mathrm{Re}(\omega_{k,1}+\sqrt{-1}\chi)^{m+n-1}-\cot\theta\, \mathrm{Im}(\omega_{k,1}+\sqrt{-1}\chi)^{m+n-1}$ is positive for $k\gg0$. Lastly, by Lemma \ref{lem:dhymevalues} below and the supercritical phase condition $\hat{\theta}_\infty\in(0,\pi)$, we see that $\omega_{k,1}$ satisfies $\theta_\chi(\omega_{k,1})\in(0,\pi)$ for $k\gg0$.
\end{proof}

\subsection{Proof of Theorem \ref{thm:dhym_main}}

\begin{lemma}\label{lem:dhymevalues}
    The eigenvalues $\{\lambda_i(k,r)\}$ of $\omega_{k,r}\cdot\chi^{-1}$ either converge to those of $(\omega_X)_\mathcal{V}\cdot\chi^{-1}_\mathcal{V}$ or diverge in the order $O(k)$.
\end{lemma}

\begin{proof}
    Denote $\Bigl( \displaystyle\sum_{i=0}^r k^{-i} \phi_{i, \mathcal{V}} \Bigr)$ by $\left( \phi_\mathcal{V} \right)_{k,r}$ and 
    $\Bigl( \displaystyle\sum_{i=0}^r k^{-i} \phi_{i, B} \Bigr)$ by $\left( \phi_B \right)_{k,r}$ for simplicity. Fix a point $p\in X$. Take a coordinate around $p$ independent of $k$ on which the form $\omega_{k,r}$ is represented as the matrix
    \begin{align*}
    &{\omega_{k,r}} \\ 
    = &\begin{bmatrix}
    \left( \omega_X \right)_\mathcal{V} + \Bigl(\sqrt{-1} \partial \bar{\partial} \left( \phi_\mathcal{V} \right)_{k,r}\Bigr)_\mathcal{V}
    & \left(\omega_X\right)_m+\Bigl(\sqrt{-1} \partial \bar{\partial} \left( \phi_\mathcal{V} \right)_{k,r}\Bigr)_m  \\
    \left(\omega_X\right)_m+\Bigl(\sqrt{-1} \partial \bar{\partial} \left( \phi_\mathcal{V} \right)_{k,r}\Bigr)_m
    & \left( \omega_k \right)_\mathcal{H}+ \Bigl(\sqrt{-1} \partial \bar{\partial} \bigl(\left( \phi_\mathcal{V} \right)_{k,r} + \left( \phi_B \right)_{k,r} \bigr)\Bigr)_\mathcal{H}
\end{bmatrix}.
\end{align*}  
Then, we have
$$\begin{bmatrix}
    I & 0 \\ 0 & \frac{1}{\sqrt{k}}I
\end{bmatrix}
\omega_{k,r}
\begin{bmatrix}
    I & 0 \\ 0 & \frac{1}{\sqrt{k}}I
\end{bmatrix} \rightarrow 
\begin{bmatrix}
    (\omega_X)_\mathcal{V} & 0 \\ 0 & \omega_B
\end{bmatrix} \text{as $k \rightarrow \infty$}.$$
This convergence implies that some eigenvalues of $\omega_{k,r}$ on this coordinate converge to those of $(\omega_X)_\mathcal{V}$ and the others converge to those of $\omega_B$ multiplied by $k$.
\end{proof}

To do the same calculations as in the $J$-equation case, we define a reference K{\"a}hler form $\chi_k:=\chi+k\pi^*\omega_B$, where $\omega_B$ is a K{\"a}hler form on the base $B$.

\begin{lemma}
    Denote the first non-zero eigenvalue of $-\tilde{F}_{\omega_{k,r},\chi}$ by $\tilde{\lambda}_1$. Then, there exists a constant $C$ independent of $k$ such that $\tilde{\lambda}_1\ge Ck^{-2}$ for any $k$.
\end{lemma}

\begin{proof}
    Let $\phi_1$ be an eigenfunction corresponding to $\tilde{\lambda}_1$. Using Lemma \ref{lem:dhymadjoint}, we have
    \begin{align*}
        \tilde{\lambda}_1
        &=-\left(\int_X\phi_1\, \tilde{F}_{\omega_{k,r},\chi}(\phi_1)\, \mathrm{Im}(\omega_{k,r}+\sqrt{-1}\chi)^{m+n}\right)\Big/\left(\int_X\phi_1^2\, \mathrm{Im}(\omega_{k,r}+\sqrt{-1}\chi)^{m+n}\right)\\
        &=\left(\int_X (m+n)\sqrt{-1}\partial\phi_1\wedge \bar{\partial}\phi_1\right.\\
        &\qquad\wedge \left(\mathrm{Re}(\omega_{k,r}+\sqrt{-1}\chi)^{m+n-1}-\cot{\theta_\chi(\omega_{k,r})}\mathrm{Im}(\omega_{k,r}+\sqrt{-1}\chi)^{m+n-1}\right)\bigg)\\
        &\qquad\times\left(\int_X\phi_1^2\, \mathrm{Im}(\omega_{k,r}+\sqrt{-1}\chi)^{m+n}\right)^{-1}.
    \end{align*}
    Let us denote the eigenvalues of $\omega_{k,r}\cdot\chi^{-1}$ by $\{\lambda_i(k,r)\}_i$. Fix a point $p\in X$. Take a coordinate around $p$ on which $\chi=\sum_i\sqrt{-1}dz^i\wedge d\bar{z}^i$ and $\omega_{k,r}=\sum_i \lambda_i(k,r)dz^i\wedge d\bar{z}^i$ at $p$.
    Then, at $p$, we have
    \begin{align*}
        &\left(\mathrm{Re}(\omega_{k,r}+\sqrt{-1}\chi)^{m+n-1}-\cot{\theta_\chi(\omega_{k,r})}\mathrm{Im}(\omega_{k,r}+\sqrt{-1}\chi)^{m+n-1}\right)\\
        =&\sum_i\frac{1}{\sin\theta_\chi(\omega_{k,r})}\big(\sin(\mathrm{arccot}\,\lambda_i(k,r))\big)\left(\prod_{j\neq i}\sqrt{\lambda_j(k,r)^2+1}\, \sqrt{-1}dz^j\wedge d\bar{z}^j\right)
    \end{align*}
    By Lemma \ref{lem:dhymevalues}, we have either $\mathrm{arccot}\, \lambda_i(k,r)=O(1)$ or $\mathrm{arccot}\, \lambda_i(k,r)=Ck^{-1}+O(k^{-2})$ for some constant $C>0$. Indeed, the latter follows from 
    \begin{align*}
        \mathrm{arccot}\, \lambda_i(k,r)=\mathrm{arccot}\, (k(C+O(k^{-1})))=\frac{1}{C}k^{-1}+O(k^{-2}),
    \end{align*}
    where we used that $\mathrm{arccot}'(x)=-1/(1+x^2)$ in the second equality. Then, we have
    \begin{align*}
        &\sqrt{-1}\partial\phi_1\wedge\bar{\partial}\phi_1\wedge\left(\mathrm{Re}(\omega_{k,r}+\sqrt{-1}\chi)^{m+n-1}-\cot{\theta_\chi(\omega_{k,r})}\mathrm{Im}(\omega_{k,r}+\sqrt{-1}\chi)^{m+n-1}\right)\\
        \ge&Ck^{-1}\sqrt{-1}\partial\phi_1\wedge\bar{\partial}\phi_1\wedge\chi_k^{m+n-1}
    \end{align*}
    for some constant $C$. On the other hand, by Lemma \ref{lem:dhymevalues}, assumption \eqref{eq:ass}, and
    \begin{align}\label{eq:volume_equivalence}
        \mathrm{Im}(\omega_{k,r}+\sqrt{-1}\chi)^{m+n}=& \sin\theta_\chi(\omega_{k,r}) \left(\prod_i \sqrt{\lambda_i^2(k,r)+1}\sqrt{-1}dz^i\wedge d\bar{z}^i\right),
    \end{align}
    we see that the volume form $\mathrm{Im}(\omega_{k,r}+\sqrt{-1}\chi)^{m+n}$ is equivalent to the volume form $\chi_k^{m+n}$ uniformly for $k\gg0$.
    Let us denote the $\chi_k$-mean value of $\phi_1$ by $m$. Then, as in the proof of \cite[Lemma 6.5]{Fine}, we have
    \begin{align*}
        &\int_X(\phi_1-m)^2\mathrm{Im}(\omega_{k,r}+\sqrt{-1}\chi)^{m+n}\\
        =&\int_X\phi_1^2\mathrm{Im}(\omega_{k,r}+\sqrt{-1}\chi)^{m+n}+m^2\int_X\mathrm{Im}(\omega_{k,r}+\sqrt{-1}\chi)^{m+n}\\
        \ge&\int_X\phi_1^2\, \mathrm{Im}(\omega_{k,r}+\sqrt{-1}\chi)^{m+n},
    \end{align*}
    where the equality follows from the fact that an eigenfunction $\phi_1$ satisfies $\int_X\phi_1\mathrm{Im}(\omega_{k,r}+\sqrt{-1}\chi)^{m+n}=0$ by Lemma \ref{lem:dhymadjoint}. By combining these estimates, we have
    \begin{align*}
        \tilde{\lambda}_1
        &\ge Ck^{-1}\left(\int_X\sqrt{-1}\partial(\phi_1-m)\wedge\bar{\partial}(\phi_1-m)\wedge\chi_k^{m+n-1}\right)\Big/\left(\int_X(\phi_1-m)^2\chi_k^{m+n}\right)\\
        &\ge C'k^{-2}
    \end{align*}
    for some constants $C$ and $C'$, where we used \cite[Lemma 6.5]{Fine} in the last inequality.
\end{proof}

Hereafter, all Sobolev spaces are considered with respect to $\chi_k$. Denote the set of functions in $L^2_l$ satisfying $\int_X\phi\, \mathrm{Im}(\omega_{k,r}+\sqrt{-1}\chi)^{m+n}=0$ by $L^2_{l,0}$. Note that $L^2_{l.0}$ is a closed codimension-one subspace of $L^2_l$ since the volume form $\mathrm{Im}(\omega_{k,r}+\sqrt{-1}\chi)^{m+n}$ is uniformly equivalent to $\chi_k^{m+n}$ by \eqref{eq:volume_equivalence}. Denote the projection from  $L^2_l$ to $L^2_{l,0}$ by $\tilde{p}$.

\begin{lemma}
    There exist a constant C and an integer $A$, independent of $k$ and $r$, such that for $\phi \in L^2_{l+2}$, an integer $r \ge A$ and any $k \gg 0$ we have
    $$ ||\phi||_{L^2_{l+2}} \le C k^A\left( ||\phi||_{L^2} + ||\tilde{F}_{\omega_{k,r}, \chi}(\phi)||_{L^2_l} \right).$$
\end{lemma}

\begin{proof}
    We follow the same arguments as in the proof of Lemma \ref{elliptic}. As in the proof of Lemma \ref{elliptic}, we denote the linearization of the operator $\cot\theta_\chi(\omega_{k,r}+\sqrt{-1}\partial\bar{\partial}\, \cdot)$ by $\tilde{G}_{k,r}$. By \cite[Lemma 3.3]{JY} and the formula $\mathrm{arctan} \, x=\pi/2-\mathrm{arccot}\, x$, locally we have 
    \begin{equation}\label{eq:dhymlin}
        \tilde{G}_{k,r}(\phi)=(\sin\theta_\chi(\omega_{k,r}))^{-2}(\eta_{k,r})^{i\bar{j}}\partial_i\partial_{\bar{j}}\phi,
    \end{equation}
    where $\eta_{k,r}$ is a Hermitian form defined by 
    \begin{equation}\label{eq:eta}
        (\eta_{k,r})_{i\bar{j}}:=\chi_{i\bar{j}}+(\omega_{k,r})_{i\bar{q}}\chi^{p\bar{q}}(\omega_{k,r})_{p\bar{j}}
    \end{equation}
    and $(\eta_{k,r})^{i\bar{j}}=(\eta_{k,r}^{-1})_{i\bar{j}}$. 
    If we use the same coordinate system $\{ (U_i, \tau_i; w^1,\dots, w^{m+n}) \}^N_{i=1}$ as in the proof of Lemma \ref{elliptic}, by the formula of the inverse of a matrix and the same calculations as those in the proof of Lemma \ref{elliptic}, we see that the eigenvalues  of $\eta^{-1}$ are estimated from below by $Ck^{-1}$ for some constant $C$. Therefore, the arguments in the proof of Lemma \ref{elliptic} work and we obtain the same elliptic estimate for the operator $\tilde{G}_{k,r}$.  We also have
    $$\Big(\tilde{G}_{k,r}(\phi)-\tilde{F}_{\omega_{k,r},\chi}\Big)(\phi)=(m+n)\frac{\sqrt{-1}\partial\cot\theta_\chi(\omega_{k,r})\wedge\bar{\partial}\phi\wedge\mathrm{Im}(\omega_{k,r}+\sqrt{-1}\chi)^{m+n-1}}{\mathrm{Im}(\omega_{k,r}+\sqrt{-1}\chi)^{m+n}}.$$
 Using the same argument as in \cite[Lemma 5.7]{Fine} implies that we have
    \begin{align*}
    \tilde{p}(\cot\theta_\chi(\omega_{k,r}))&= O(k^{-r-1}) &\text{in $C^l(\chi_k)$},\\
    \tilde{p}(\cot\theta_\chi(\omega_{k,r}))&= O(k^{-r-1-n/2}) &\text{in $L^2_l(\chi_k)$}.
    \end{align*}
    Therefore, the same argument as in the proof of Lemma \ref{elliptic} yields the desired estimate for $k\gg0$ if we choose $r\ge A$.
\end{proof}

Combining the lemmas we have proved so far, the same proof as that of Lemma \ref{lem:inverse_estimate} yields the following:

\begin{lemma}
    There exist a constant $C$, which depends on $r$, and an integer A, which is independent of $r$,
    such that for all $\phi \in L^2_{l,0}$ and $k \gg 0$, we have
    $$ ||\tilde{F}_{\omega_{k,r}, \chi}^{-1} (\phi)||_{L^2_{l+2}}
    \le Ck^A ||\phi||_{L^2_l}.$$
\end{lemma}

As in the case of the $J$-equation, let $l$ be an integer that satisfies $l-(m+n)>0$. Then, for a function $\phi\in L^2_{l+2}$, we can define the operator $\cot\theta_\chi(\omega_{k,r}):L^2_{l+2}\to L^2_l$ by $\cot\theta_\chi(\omega_{k,r})(\phi)=\cot\theta_\chi(\omega_{k,r}+\sqrt{-1}\partial\bar{\partial}\phi)$. We also define $\tilde{\mathcal{L}}_{k,r}:=\tilde{p}\, \circ\, \cot\theta_{\chi}(\omega_{k,r})|_{L^2_{l+2,0}}$. Recall that we have
\begin{align*}
    &\Big(D_0\cot\theta_\chi(\omega_{k,r})-\tilde{F}_{\omega_{k,r},\chi}\Big)(\phi)\\
    =&(m+n)\frac{\sqrt{-1}\partial\cot\theta_\chi(\omega_{k,r})\wedge\bar{\partial}\phi\wedge\mathrm{Im}(\omega_{k,r}+\sqrt{-1}\chi)^{m+n-1}}{\mathrm{Im}(\omega_{k,r}+\sqrt{-1}\chi)^{m+n}},
\end{align*}
where $D_0\cot\theta_\chi(\omega_{k,r})$ denotes the linearization of $\cot\theta_\chi(\omega_{k,r})$ at $0$. Note also that there exists a constant $C$ such that the inequality
$$\Vert\tilde{p}(\phi)\Vert_{L^2_l}\le C\Vert\phi\Vert_{L^2_l}$$
always holds for $\phi\in L^2_l$. Indeed, let $m$ be a mean value of $\phi$ with respect to the volume form $\mathrm{Im}(\omega_{k,r}+\sqrt{-1}\chi)^{m+n}$. Then, by uniform equivalence of the volume forms $\mathrm{Im}(\omega_{k,r}+\sqrt{-1}\chi)^{m+n}$ and $\chi_k^{m+n}$, we see that
\begin{align*}
    \int_X(\phi-m)^2\chi_k^{m+n}\le &C\int_X(\phi-m)^2\mathrm{Im}(\omega_{k,r}+\sqrt{-1}\chi)^{m+n} \\
    = &C\int_X \phi\, (\phi-m)\mathrm{Im}(\omega_{k,r}+\sqrt{-1}\chi)^{m+n}\\
    \le&C'\int_X\phi^2 \,\chi_k^{m+n}
\end{align*}
for some constant $C'$.
Since $\tilde{p}\, \circ\, \tilde{F}_{\omega_{k,r},\chi}=\tilde{F}_{\omega_{k,r},\chi}$ by  Lemma \ref{lem:dhymadjoint} and $\tilde{p}$ is linear, we obtain
$$\left\Vert(D_0\tilde{\mathcal{L}}_{k,r}-\tilde{F}_{\omega_{k,r},\chi})(\phi)\right\Vert_{L^2_l}\le ck^{-r-1}\left\Vert\phi\right\Vert_{L^2_{l+2}}.$$
In conclusion, if we choose $r\ge A$, Lemma \ref{lem:inverse} implies that $D_0\tilde{\mathcal{L}}_{k,r}$ is an isomorphism for $k\gg0$ and the operator norm of the inverse $P$ satisfies $\Vert P\Vert_{\mathrm{op}}\le Ck^A$ for some constant $C$.

We can also estimate the radius of the ball on which $\tilde{\mathcal{L}}_{k,r}-D_0\tilde{\mathcal{L}}_{k,r}$ is Lipschitz with constant $1/(2\Vert P\Vert)$ by the same argument as in the case of the $J$-equation. Denote the nonlinear part of $\tilde{\mathcal{L}}_{k,r}$ by $\tilde{\mathcal{N}}_{k,r}$, i.e., $\tilde{\mathcal{N}}_{k,r}=\tilde{\mathcal{L}}_{k,r}-D_0\tilde{\mathcal{L}}_{k,r}$. Then, by the mean value theorem, we have
$$\Vert\tilde{\mathcal{N}}_{k,r}(\phi)-\tilde{\mathcal{N}}_{k,r}(\psi)\Vert_{L^2_l}\le\sup_{f\in[\phi,\psi]}\Vert D_f\tilde{\mathcal{N}}_{k,r}\Vert\Vert\phi-\psi\Vert_{L^2_{l+2}},$$
where $D_f\tilde{\mathcal{N}}_{k,r}$ denotes the derivative of $\tilde{\mathcal{N}}_{k,r}$ at $f$.

\begin{lemma}
    There exists a constant $C$ which is independent of $k$ such that if 
    $ \Vert f \Vert_{L^2_{l+2}} \le \epsilon \, k^{-n/2}$ for a small constant $\epsilon$, 
    we have 
    $$\Vert D_f \tilde{\mathcal{N}}_{k,r} \Vert \le C k^{n/2} ||f||_{L^2_{l+2}}.$$
\end{lemma}

\begin{proof}
    For $\phi\in L^2_{l+2}$, by \eqref{eq:dhymlin}, we have
    \begin{align*}
        \Vert (D_f\tilde{\mathcal{N}}_{k,r})(\phi)\Vert_{L^2_l}=&\Vert(D_f\tilde{\mathcal{L}}_{k,r}-D_0\tilde{\mathcal{L}}_{k,r})(\phi) \Vert_{L^2_l}\\
        \le&C\left\Vert (\partial_p\partial_{\bar{q}}\phi)\,  \left(-(\eta_{k,r,f})^{m\bar{q}}+(\eta_{k,r})^{m\bar{q}}\right)\right\Vert_{L^2_l},
    \end{align*}
    where $\eta_{k,r,f}$ denotes the Hermitian form defined by \eqref{eq:eta} with $\omega_{k,r}$ replaced by $\omega_{k,r,f}$ and we estimate $C^{-1}<\sin\theta_\chi(\omega_{k,r})<C$ for some constant $C$ for $k\gg0$ by Lemma \ref{lem:dhymevalues} in the inequality. As in the proof of Lemma \ref{lem:lipschitz}, by using the Sobolev constant with respect to $\chi_k$, which is independent of $k$ by \cite[Lemma 5.8]{Fine}, we obtain
    \begin{align*}
        \Vert (D_f\tilde{\mathcal{N}}_{k,r})(\phi)\Vert_{L^2_l}\le C\Vert\phi\Vert_{L^2_{l+2}}\Vert-\eta_{k,r,f}^{-1}+\eta_{k,r}^{-1}\Vert_{L^2_l}.
    \end{align*}
    Note that we can calculate as 
    \begin{equation}\label{eq:etadiff}
        \begin{split}
            -\eta_{k,r,f}^{-1}+\eta_{k,r}^{-1}
            =&\eta_{k,r,f}^{-1}(\omega_{k,r,f}\chi^{-1}\omega_{k,r,f}-\omega_{k,r}\chi^{-1}\omega_{k,r})\eta_{k,r}^{-1}\\
            =&\eta_{k,r,f}^{-1}(\sqrt{-1}\partial\bar{\partial}f)\chi^{-1}\omega_{k,r,f}+\eta_{k,r,f}^{-1}\omega_{k,r}\chi^{-1}(\sqrt{-1}\partial\bar{\partial}f)\eta_{k,r}^{-1}
        \end{split}
    \end{equation}
    We claim that $\Vert\eta_{k,r}^{-1}\Vert_{C^{l+2}(\chi_k)}$ and $\Vert\omega_{k,r}\Vert_{C^{l+2}(\chi_k)}$ are uniformly bounded above. If this claim is true, we see that $\Vert\eta_{k,r}^{-1}\Vert_{C^{l+2}(\chi_k)}$ and $\Vert\omega_{k,r}\Vert_{C^{l+2}(\chi_k)}$ are estimated above by $Ck^{n/2}$ for some constant $C$ independent of $k$. Then, if we assume $\Vert f\Vert_{L^2_{l+2}}\le \epsilon\, k^{-n/2}$ for a small constant $\epsilon$, we obtain $\Vert\omega_{k,r,f}\Vert_{L^2_l}\le\Vert\omega_{k,r}\Vert_{L^2_l}+C$ for some constant $C$. Evetually by the same argument as in the proof of Lemma 
    \ref{lem:lipschitz} and \eqref{eq:etadiff}, we can estimate $\Vert\eta_{k,r,f}^{-1}\Vert_{L^2_l}$ from above by $Ck^{n/2}$. Therefore, by using \eqref{eq:etadiff} again, we obtain the desired estimate. Lastly, we confirm the claim above. To calculate, let us use a coordinate corresponding to the splitting \eqref{eq:split}. Then, the form $\chi_k$ and $\chi$ do not have mixed terms. Note also that the Christoffel symbols of $\chi_k$ are bounded uniformly. Therefore, by the definitions of $\omega_{k,r}$, \eqref{eq:dhymapp}, and $\eta_{k,r}$, \eqref{eq:eta}, direct computations show that the claim holds.
\end{proof}
As in the case of the $J$-equation, the estimates we obtained allow us to use Theorem \ref{thm:IFT}. Therefore, we complete the proof of Theorem \ref{thm:dhym_main}.

\subsection{A variant of Theorem \ref{thm:converse}}
In this subsection, we prove a variant of Theorem \ref{thm:converse} for the dHYM equation (Theorem \ref{thm:dhym_converse}).
Similarly to the $J$-equation, the solvability of the supercritical dHYM equation is characterized by the following topological condition:

\begin{theorem}[{\cite{G.Chen, CLT}}]\label{thm:CLT}
    Let $(X,\chi)$ be compact $n$-dimensional K{\"a}hler manifold and $\omega$ a closed real $(1,1)$-form. Suppose the supercritical condition holds (see the first part of Subsection \ref{sec:dhymfirst} for the definition). Then, the following are equivalent:
    \begin{enumerate}[font=\normalfont]
        \item There exists a solution $\omega'\in[\omega]$ of the dHYM$_\chi$ equation.
        \item Let $\omega_{t,0}$ be a test family emanating from $\omega_0\in[\omega]$, namely $\omega_{t,0}$ satisfies the following conditions:
        \begin{enumerate}[font=\normalfont]
            \item $\omega_{0,0}=\omega_0\in[\omega]$.
            \item For any $t_1<t_2$, we have $\omega_{t_1,0}<\omega_{t_2,0}$.
            \item There exists a number $T\ge0$ such that $\omega_{t,0}-\cot(\frac{\theta}{n})\chi>0$ holds for all $t\in[T,\infty)$.
        \end{enumerate}
        Then, for any $p$-dimensional subvariety $W\subset X$, where $p=1,2,\dots ,{n-1}$, and for any $t\in[0,\infty)$, we have
        $$\int_W\left(\mathrm{Re}(\omega_{t,0}+\sqrt{-1}\chi)^p-\cot\theta\, \mathrm{Im}(\omega_{t,0}+\sqrt{-1}\chi)^p\right)>0.$$
    \end{enumerate}
\end{theorem}

Related to the above theorem, by the same arguments as in the proof of Theorem \ref{thm:converse}, we can prove the following:
\begin{theorem}\label{thm:dhym_converse}
    Assume Setup \ref{setup:dhym} with $\theta_\infty\in(0,\pi)+2\pi\mathbb{Z}$. Suppose that for any $p$-dimensional subvariety $W\subset X$, where $p=1,2,\dots,{m+n-1}$, there exists a positive constant $k_W$ such that we have
    $$\int_W\left(\mathrm{Re}(\omega_k+\sqrt{-1}\chi)^p-\cot\theta\, \mathrm{Im}(\omega_k+\sqrt{-1}\chi)^p\right)\ge0$$
    for $k\ge k_W$. Then, on each fiber $X_b$, we have
    $$\int_{W_b}\left(\mathrm{Re}(\omega_b+\sqrt{-1}\chi)^p-\cot\theta_\infty\, \mathrm{Im}(\omega_b+\sqrt{-1}\chi)^p\right)\ge0$$
    for any $p$-dimensional subvariety $W_b\subset X_b$, where $p=1,2,\dots,m-1$. In addition, if the restriction $\omega_b$ is a solution of the dHYM$_{\chi_b}$ equation, then the pair $([\omega_B],[\chi_B])$ is $J$-nef.
\end{theorem}

\begin{proof}
   By \eqref{eq:dhymcexpan}, the same arguments as in the proof of Theorem \ref{thm:converse} imply the first statement. Suppose now that the restriction $\omega_b$ is a solution of the dHYM$_{\chi_b}$ equation. Then, the $k^{-1}$-order term of \eqref{eq:dhymcexpan} becomes
    $$-k^{-1}\frac{n}{m+1}\frac{\int_B\chi_B\wedge\omega_B^{n-1}}{\int_B\omega_B^n}.$$
    Therefore, the same arguments as in the proof of Theorem \ref{thm:converse} imply the second statement.
\end{proof}

\begin{remark}\label{rem:other}
    In this paper, we discussed the case $\theta_\infty\in(0,\pi)+2\pi\mathbb{Z}$. The other cases can be treated by the same manner as follows.
    \begin{itemize}
        \item The case $\theta_\infty\in(\pi,2\pi)+2\pi\mathbb{Z}$: we consider the same operator $\cot\theta_\chi(\omega_k)$ and the same form $\chi_B$ defined in Lemma \ref{lem:dhymkahler}, which is K{\"a}hler by the same proof. The only difference is that $-\mathrm{Im}(\omega_X+\sqrt{-1}\chi)^m_{\mathcal{V}}$ becomes a volume form on each fiber.
        \item The case $\theta_\infty\in(-\pi/2,\pi/2)+2\pi\mathbb{Z}$: we consider the operator $\tan\theta_\chi(\omega_k)$.
        \item The case $\theta_\infty\in(\pi/2,3\pi/2)+2\pi\mathbb{Z}$: we consider the operator $\tan\theta_\chi(\omega_k)$.
    \end{itemize}
    In the last two cases, the K{\"a}hler form $\chi_B$ is defined by 
    $$\chi_B:=\frac{\pi_*\Bigl(\mathrm{Im}(\omega_X+\sqrt{-1}\chi)^{m+1}-\tan\theta_\infty\,\mathrm{Re}(\omega_X+\sqrt{-1}\chi)^{m+1}\Bigr)}{\pi_*\Bigl(\mathrm{Re}(\omega_X+\sqrt{-1}\chi)^m\Bigr)}.$$
\end{remark}

\section{Examples}\label{sec:examples}
In this last section, we provide some examples where our main results are applicable. To apply our main results, we need to have an example in which the existence of solutions of the $J$-equations and the dHYM equations is known. These are, for example, compact Riemann surfaces and the projective space $\mathbb{P}^n$. Since both have Picard number one, for any pair of $(1,1)$ cohomology groups, we have trivial solutions of the $J$-equation and the dHYM equation \eqref{eq:dhymlift}. That is, for a Kähler form $\chi$ and $p\in\mathbb{R}$, we have
\begin{equation*}
    \Lambda_\chi p\chi=pn, \quad \theta_\chi(p\chi)=n\operatorname{arccot}p,
\end{equation*}
where $\theta_\chi$ is defined in \eqref{eq:theta}.
The following lemma guarantees that once we can solve an equation on each fiber, we can find a form whose restriction on each fiber is a solution of the equation:

\begin{lemma}
    Assume setup \ref{setup}. Suppose that on each fiber $X_b$, there exists a solution $\omega'_b\in[\omega_X]$ of the $J_{\chi_b}$-equation. Then, there exists a relatively Kähler form $\omega_X'\in[\omega_X]$ such that $\omega_X'|_{X_b}=\omega_b'$ on each fiber. The same type of result also holds for the dHYM equation in setup \ref{setup:dhym}.
\end{lemma}

\begin{proof}
    From Lemma \ref{lem:properties}, the arguments as in the proof of \cite[Lemma 2.1]{Fine2} follow verbatim for the $J$-equation. Namely, since a solution of the $J$-equation is unique and $\operatorname{ker}F_{\omega_b,\chi_b}=\mathbb{R}$, a smooth function $\phi_b$ on $X_b$ that satisfies $\omega'_b=\omega_X|_{X_b}+\sqrt{-1}\partial\bar{\partial}\phi_b$ is smooth in $b$. Therefore, the function $\phi$ in $X$, defined by $\phi(x):=\phi_{\pi(x)}(x)$, is smooth and $\omega'_X=\omega_X+\sqrt{-1}\partial\bar{\partial}\phi$ is the desired form. The same arguments also follow for the dHYM equation, since a solution of the dHYM equation is unique by \cite[Theorem 1.1]{JY} and Lemma \ref{lem:dhymadjoint} (and its variant for $\tan\theta_\chi(\omega)$) holds.
\end{proof}

Using this lemma, we provide two examples of our main results. 

\begin{corollary}\label{cor:surf}
    \begin{enumerate}
        \item In setup \ref{setup}, assume that $X$ is two-dimensional and $B$ is one-dimensional.
        Then, there exists a solution of the $J_\chi$-equation in $[\omega_X+k\omega_B]$ for $k\gg0$. 
        \item In setup \ref{setup:dhym}, assume that $X$ is two-dimensional and $B$ is one-dimensional. Then, there exists a solution of the dHYM$_\chi$ equation in $[\omega_X+k\omega_B]$ for $k\gg0$.
    \end{enumerate}
\end{corollary}

\begin{corollary}\label{cor:proj}
    \begin{enumerate}
        \item In setup \ref{setup}, assume that $X$ is a projective bundle $\mathbb{P}(E)$ and $B$ is the projective space $\mathbb{P}^{n}$.
        Then, there exists a solution of the $J_\chi$-equation in $[\omega_X+k\omega_B]$ for $k\gg0$. 
        \item In setup \ref{setup:dhym}, assume that $X$ is a projective bundle $\mathbb{P}(E)$ and $B$ is the projective space $\mathbb{P}^{n}$. Then, there exists a solution of the dHYM$_\chi$ equation in $[\omega_X+k\omega_B]$ for $k\gg0$.
    \end{enumerate}
\end{corollary}

Remark that the first item of Corollary \ref{cor:surf} has already been proved by \cite[equation (14)]{SD}. Also, the first item of Corollary \ref{cor:proj} in the case 
\begin{equation}
    \begin{aligned}
        &E=(\mathcal{O}_{\mathbb{P}^n}\oplus \mathcal{O}_{\mathbb{P}^n}(-1)^{\oplus m}), \quad [\omega_X]=\pi^*c_1(\mathcal{O}_{\mathbb{P}^n}(1))+b[D_\infty],\\ 
        &[\chi]=\pi^*c_1(\mathcal{O}_{\mathbb{P}^n}(1))+b'[D_\infty], \quad[\omega_B]=c_1(\mathcal{O}_{\mathbb{P}^n}(1)), \quad b>0,b'>0,
    \end{aligned}
\end{equation} 
has already been proved by \cite[Theorem 1.7]{FL} (see also \cite{DMS}). Here, the cohomology class $[D_\infty]$ is the one associated to the divisor $D_\infty:=\mathbb{P}(O_{\mathbb{P}^n}(-1)^{\oplus m})$.
The second item of Corollary \ref{cor:proj} in the case of the blowup of $\mathbb{P}^{n+1}$ at a point, that is,
\begin{equation}
    \begin{aligned}
        &E=(\mathcal{O}_{\mathbb{P}^n}\oplus \mathcal{O}_{\mathbb{P}^n}(-1)), \quad [\omega_X]=
        s\pi^*c_1(\mathcal{O}_{\mathbb{P}^n}(1))+t[D_\infty]=(s+t)[H]-s[E],\\ 
        &[\chi]=\pi^*c_1(\mathcal{O}_{\mathbb{P}^n}(1))+b'[D_\infty]=(1+b')[H]-[E],\quad [\omega_B]=c_1(\mathcal{O}(1)), \quad b'>0,
    \end{aligned}
\end{equation} 
has already been proved by \cite{JS}. Here, the cohomology class $[H]$ is a pullback of the hyperplane bundle on $\mathbb{P}^{n+1}$ and the cohomology class $[E]$ is the one associated with the exceptional divisor.  We can see that the result \cite{JS} implies Corollary \ref{cor:proj} (2) in this case since the sufficient condition in \cite[Lemma 1]{JS} holds for $k\gg0$.

\end{document}